\newtheorem{theorem}{Theorem}[section]
\newtheorem{corollary}[theorem]{Corollary} 
\newtheorem{lemma}[theorem]{Lemma}
\newtheorem{proposition}[theorem]{Proposition}
\newtheorem{conjecture}[theorem]{Conjecture}
\newtheorem{remark}[theorem]{Remark}
\theoremstyle{definition}
\newtheorem{example}[theorem]{Example}
\newtheorem{definition}[theorem]{Definition}
\newcommand{\Mcal}{\mathcal{M}}
\newcommand{\Acal}{\mathcal{A}}
\newcommand{\Ucal}{\mathcal{U}}
\newcommand{\Wcal}{\mathcal{W}}
\newcommand{\Ecal}{\mathcal{E}}
\newcommand{\Tcal}{\mathcal{T}}
\newcommand{\Ccal}{\mathcal{C}}
\newcommand{\al}{\alpha}
\newcommand{\ii}{\mathbf{i}}
\newcommand{\of}{\otimes5}
\newcommand{\RR}{\mathbb{R}}
\newcommand{\la}{\lambda}
\newcommand{\Lcal}{\mathcal{L}}
\newcommand{\Scal}{\mathcal{S}}
\DeclareMathOperator{\Span}{Span}
\DeclareMathOperator{\Mod}{mod}
\DeclareMathOperator{\SAdj}{SAdj}
\DeclareMathOperator{\Adjoin}{Adjoin}
\DeclareMathOperator{\rk}{rk}
\DeclareMathOperator{\srk}{srk}
\DeclareMathOperator{\drk}{drk}
\DeclareMathOperator{\sdrk}{sdrk}
\DeclareMathOperator{\Vect}{Vect}
\DeclareMathOperator{\minrk}{\min \rk}
\DeclareMathOperator{\minsrk}{\min \srk}
\title[Lower bounds on the rank and symmetric rank of real tensors]{Lower bounds on the rank and symmetric rank \\ of real tensors}
\author{Kexin Wang and Anna Seigal}
\date{}
\begin{document}

\begingroup
\let\MakeUppercase\relax 
\maketitle
\endgroup

\begin{abstract}
 We lower bound the rank of a tensor by a linear combination of the ranks of three of its unfoldings, using Sylvester's rank inequality. In a similar way, we lower bound the symmetric rank by a linear combination of the symmetric ranks of three unfoldings. 
Lower bounds on the rank and symmetric rank of tensors are important for finding counterexamples to Comon's conjecture. 
    A real counterexample to Comon's conjecture is a tensor whose real rank and real symmetric rank differ. Previously, only one real counterexample was known. We divide the construction into three steps. The first step involves linear spaces of binary tensors. The second step considers a linear space of larger decomposable tensors. The third step is to verify a conjecture that lower bounds the symmetric rank, on a tensor of interest. We use the construction to build an order six real tensor whose real rank and real symmetric rank differ.

\end{abstract}




\section{Introduction}

Tensors are multidimensional arrays.
We consider real tensors $\Tcal \in \RR^{I_1} \otimes \cdots \otimes \RR^{I_d}$, where $\RR^{I_j}$ is the vector space with basis elements indexed by the set $I_j$. After fixing a basis for each vector space, the tensor $\Tcal$ is a
multidimensional array of $\prod_{j=1}^d I_j$ real entries. The entry of $\Tcal$ at $(k_1, \ldots, k_d) \in I_1 \times \cdots \times I_d$ is denoted $\Tcal(k_1 | \ldots | k_d)$.
The number of indices $d$ is called the order of $\Tcal$.
Tensors 
appear in statistics~\cite{anandkumar2013tensor,anandkumar2014tensor,mccullagh2018tensor,robeva2019duality,bi2021tensors}, complexity theory~\cite{burgisser2011geometric,landsberg2017geometry}, biological data analysis~\cite{gtex2015genotype,hore2016tensor,subramanian2017next,schurch2020coordinated,ahern2021blood}, and many other applications.

A tensor~$\Tcal \in (\RR^I)^{\otimes d}$ is symmetric if its entries are unchanged under permuting indices; i.e., if $\Tcal(k_1 | \ldots | k_d) = \Tcal (\sigma(k_1) | \ldots | \sigma(k_d))$ for $\sigma$ any permutation of $d$ letters.
For example, the moment tensors of probability distributions and the higher order derivatives of smooth functions are symmetric tensors.
There is a natural correspondence between symmetric tensors in $(\RR^I)^{\otimes d}$ and homogeneous polynomials of degree $d$ in $|I|$ variables with coefficients in $\RR$. The bijection~is 
$$
\Tcal \quad \leftrightarrow \sum_{k_1,\ldots,k_d \in I} \Tcal(k_1| \ldots |k_d) x_{k_1} \! \cdots x_{k_d}.
$$
We will refer to a symmetric tensor and its corresponding polynomial interchangeably. 
In this paper, we consider tensor ranks defined over the real numbers. 
These can be be greater than those over the complex numbers, see e.g.~\cite[Example 8.3]{comon2008symmetric}.

\begin{definition}
A tensor $\Tcal$ is {\em decomposable} (or {\em has rank at most one}) if
there exist vectors $v_j \in \RR^{I_j}$ for all $j \in \{ 1 , \ldots,  d\}$ such that 
$$\Tcal(k_1 | \ldots | k_d) = v_1 (k_1) \cdots v_d(k_d) .$$
The {\em rank} $\rk \Tcal$ is the minimal $r$ such that~$\Tcal$ can be written as the sum of $r$ decomposable tensors. 
For symmetric $\Tcal$, the {\em symmetric rank} $\srk \Tcal$ is the minimal $r$ such that $\Tcal$ can be written as the sum of $r$ symmetric decomposable tensors. 
\end{definition}

Writing a tensor as a sum of rank one terms decomposes it into building blocks that can be interpreted in a context of interest, such as recovering parameters in a mixture model~\cite{lim2009nonnegative,anandkumar2014tensor,sullivant2018algebraic} and counting 
the multiplications in an optimal algorithm for a linear operator~\cite{landsberg2017geometry}. The symmetric rank appears in independent component analysis while the rank arises in multiway factor analysis~\cite{comon2008symmetric}.

There are many numerical algorithms to decompose a tensor~\cite{vervliet2016tensorlab,kolda2006matlab}. However, there are few exact tools and it is difficult to find the exact rank or symmetric rank of a tensor~\cite{haastad1989tensor,hillar2013most,landsberg2012tensors}. The main challenge is to find lower bounds for the rank, since an upper bound is obtained by exhibiting a decomposition. Known methods to lower bound the rank of a tensor that apply in general are the substitution method~\cite{burgisser2013algebraic}, lower bounding by the rank of a flattening or unfolding~\cite{landsberg2012tensors}, and using the singularities of a hypersurface defined by the tensor~\cite{landsberg2010ranks}.

The rank and symmetric rank coincide for a symmetric matrix; i.e., for an order two tensor. The rank can be found from a matrix decomposition such as the eigendecomposition and singular value decomposition. 
The question of whether the rank and symmetric rank are always equal for higher order tensors was posed by Comon.
First results for the agreement of rank and symmetric rank were given in~\cite{comon2008symmetric}. 
The assertion that the rank and symmetric rank of a tensor always agree is known as Comon's conjecture.
There has been significant progress into Comon's conjecture, see e.g.~\cite{friedland2016remarks,zhang2016comon}.
The conjecture has also been posed for tensors over other fields~\cite{zheng2020comon}, for partially symmetric decompositions~\cite{gesmundo2019partially}, and for the border rank of a tensor~\cite{buczynski2013determinantal}, which may differ from the rank~\cite{de2008tensor}.

However, Comon's conjecture was disproved via construction of a complex counterexample~\cite{shitov2018counterexample} and a real counterexample~\cite{shitov2020comon}. These two counterexamples demonstrate how linear algebra along the different indices of a tensor can combine in unintuitive ways.
The paper~\cite{shitov2018counterexample} constructs a symmetric $800 \times 800 \times 800$ tensor with complex rank $903$ and complex symmetric rank at least $904$. The paper~\cite{shitov2020comon} shows the existence of a real symmetric tensor of format $208 \times 208 \times 208 \times 208$, with rank $761$ and symmetric rank $762$. 
To date, these large tensors are the only known counterexamples.
In comparison, the agreement of rank and symmetric rank was shown for small tensors in~\cite{seigal2019structured,seigal2020ranks}. 
The problem of finding a minimal size, or minimal rank, counterexample to Comon's conjeture remains unsolved. The border rank analogue to the conjecture also remains open. 

\bigskip

In this paper, our first main contribution is to give new lower bounds on the rank and symmetric rank of a tensor. To state the lower bounds, we first recall the standard notions of flattenings and slices of a tensor.

\begin{definition}[Flattenings]
\label{def:flatten}
Fix $\Tcal \in \mathbb{R}^{I_1}\otimes\cdots\otimes\mathbb{R}^{I_d}$ and a subset $J \subset [d]$. 
The $J$ {\em flattening} of $\Tcal$, denoted $\Tcal^{(J)}$, is a matrix with rows indexed by $\times_{j \in J} I_j$ and columns indexed by $\times_{h \notin J} I_h$. The entry of $\Tcal^{(J)}$ at row index $(k_j : j \in J)$ and column index $(k_h : h \notin J)$ is  $\Tcal (k_1 | \ldots | k_d)$.
For $J = \emptyset$ we obtain a vector $\Tcal^{(\emptyset)} \in \RR^{\prod_{j=1}^d I_j}$. We call this vector the {\em vectorisation} of $\Tcal$ and denote it by $\Vect \Tcal$.
\end{definition}

A partition $[d] = J_1 \cup \cdots \cup J_\delta$ gives an order~$\delta$ {\em unfolding} of $\Tcal$, whose entry at $((k_j : j \in J_1), \ldots, (k_j : j \in J_\delta))$ is $\Tcal (k_1 | \ldots | k_d)$. The $J$ flattening is the case $[d] = J \cup J^c$.

\begin{definition}[Slices]
\label{def:slices}
Given $\Tcal \in \RR^{I_1} \otimes \cdots \otimes \RR^{I_d}$, its 
$i$th $j$ {\em slice} $\Tcal^j_i \in \RR^{I_1} \otimes \cdots \otimes \RR^{I_{j-1}} \otimes \RR^{I_{j+1}} \otimes \cdots \otimes \RR^{I_d}$ is obtained by fixing the $j$th index of $\Tcal$ to take value $i$,
$$\Tcal^j_i(k_1|k_2|\ldots|k_{j-1}|k_{j+1}|\ldots|k_{d-1}|k_d)=\Tcal(k_1|k_2|\ldots|k_{j-1}|i|k_{j+1}|\ldots|k_{d-1}|k_d).
$$
Fixing $\ii = (\ii_j : j \in J) \in \times_{j \in J} I_j$ for $J \subset [d]$, the $\ii$th $J$ slice $\Tcal_\ii^J \in \otimes_{h \notin J} \RR^{I_h}$ is obtained by fixing index $j$ to take value $\ii_j$, for all $j \in J$.
\end{definition}

The columns of the flattening $\Tcal^{(J)}$ are the vectorisations  of the slices $\Tcal_\ii^{J^c}$, where $J^c = [d] \backslash J$ and $\ii$ ranges over $\times_{h \notin J} I_h$.

\bigskip

To state our first main contribution, we give the following new definitions. 

\begin{definition}
\label{def:Lj}
The $J$th {\em slice space} $\Lcal_J \subset \otimes_{j \in J} \RR^{I_j}$ is the span of $\{ \Tcal_\ii^{J^c} : \ii \in \times_{h \notin J} I_h \}$; i.e., the span of the tensors whose vectorisations appear as the columns of $\Tcal^{(J)}$. 
\end{definition}

\begin{definition}
\label{def:drk}
The {\em $J$th decomposable flattening rank} of $\Tcal$, denoted $\drk_J \Tcal$, is the smallest $r$ such that there exist $r$ decomposable tensors in $\otimes_{j \in J}\RR^{I_j}$ whose linear span contains the slice space $\Lcal_J$.
\end{definition}

We note the comparison with decompositions to compute the strength of a tensor~\cite{bik2019polynomials}, which depend on indexing sets that may vary from one summand to the next.

For a symmetric tensor $\Tcal \in (\RR^I)^{\otimes d}$, the flattening $\Tcal^{(J)}$ only depends on $J$ via $j = |J|$, so we abbreviate $\Tcal^{(J)}$ to $\Tcal^{(j)}$.
Similarly, we abbreviate $\Lcal_J$ to $\Lcal_j$ and $\drk_J \Tcal$ to $\drk_j \Tcal$.

\begin{definition}
\label{def:sdrk}
The {\em $j$th symmetric decomposable flattening rank} of $\Tcal^{(j)}$, denoted $\sdrk_j \Tcal$, is the smallest $r$ such that there exist $r$ symmetric decomposable tensors in $(\RR^I)^{\otimes j}$ that span the slice space $\Lcal_j$. \end{definition}

\begin{remark}
Definition~\ref{def:sdrk}, with $\mathbb{C}$ instead of $\RR$,
is the $j$th gradient rank from~\cite[Definition 1.2]{gesmundo2019partially}.
However, Definition~\ref{def:sdrk} differs from the decomposable symmetric rank in~\cite{rodriguez2021rank}, the smallest $r$ such that a symmetric tensor can be written as the sum of $r$ tensors of the form $\frac{1}{d!}\sum_{\sigma\in S_d}z_{\sigma(1)}\otimes\cdots\otimes z_{\sigma(d)}$.
\end{remark}

Our first main result is the following lower bounds on the rank and symmetric rank.

\begin{theorem}\label{thm:A}
Let $\Tcal$ be an order $d$ tensor, and fix $J \subset [d]$, with $J^c := [d] \backslash J$ and $j = |J|$. Then
$$ \rk \Tcal \, \geq \, \drk_J \Tcal + \drk_{J^c} \Tcal - \rk \Tcal^{(J)} . $$
If $\Tcal$ is symmetric then
$$ \srk \Tcal \geq \sdrk_j \Tcal + \sdrk_{d-j} \Tcal - \rk \Tcal^{(j)} . $$
\end{theorem}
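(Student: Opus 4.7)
The plan is to deduce both inequalities from a single application of Sylvester's rank inequality to a matrix factorisation of the flattening $\Tcal^{(J)}$ coming from a minimum rank decomposition of $\Tcal$. I would begin by setting $r = \rk \Tcal$ and fixing a decomposition $\Tcal = \sum_{i=1}^r u_{i,1}\otimes\cdots\otimes u_{i,d}$. Regrouping the tensor factors according to the partition $[d] = J \cup J^c$ immediately gives $\Tcal^{(J)} = AB\T$, where $A$ is the matrix whose $i$th column is $a_i = \Vect(\otimes_{j \in J} u_{i,j})$ and $B$ is the matrix whose $i$th column is $b_i = \Vect(\otimes_{j \in J^c} u_{i,j})$. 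The key feature to exploit is that these columns are vectorisations of decomposable tensors in $\otimes_{j\in J}\RR^{I_j}$ and $\otimes_{j\in J^c}\RR^{I_j}$ respectively.

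Sylvester's inequality applied to this factorisation yields $\rk \Tcal^{(J)} \geq \rk A + \rk B - r$, which rearranges to $r \geq \rk A + \rk B - \rk \Tcal^{(J)}$. The proof then reduces to the two matching bounds $\rk A \geq \drk_J \Tcal$ and $\rk B \geq \drk_{J^c}\Tcal$. This is the step I expect to require the most care, and it proceeds as follows. Because $\Tcal^{(J)} = AB\T$, every column of $\Tcal^{(J)}$ lies in the column span of $A$, so $\Lcal_J$ is contained in the span of the $r$ decomposable tensors $\{\otimes_{j\in J} u_{i,j}\}_{i=1}^r$. Extracting a maximal linearly independent subset of these $r$ tensors produces $\rk A$ decomposable tensors whose span still contains $\Lcal_J$, witnessing $\drk_J \Tcal \leq \rk A$. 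The bound $\rk B \geq \drk_{J^c}\Tcal$ follows by the symmetric argument. Combining with Sylvester delivers the rank inequality.

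For the symmetric statement, I would repeat the construction starting from a symmetric decomposition $\Tcal = \sum_{i=1}^r u_i^{\otimes d}$ with $r = \srk \Tcal$. The induced factorisation $\Tcal^{(j)} = AB\T$ now has $a_i = \Vect(u_i^{\otimes j})$ and $b_i = \Vect(u_i^{\otimes (d-j)})$, so the columns on both sides are vectorisations of \emph{symmetric} decomposable tensors. Repeating the extraction argument with these columns produces the bounds $\sdrk_j \Tcal \leq \rk A$ and $\sdrk_{d-j} \Tcal \leq \rk B$, and Sylvester closes the argument. The whole proof thus hinges on the observation that a rank (or symmetric rank) decomposition automatically factors the flattening through matrices whose columns vectorise (symmetric) decomposable tensors in the correct ambient space.
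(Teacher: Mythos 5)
Your proposal is essentially the paper's proof: factor $\Tcal^{(J)}$ as $AB\T$ from a minimum-rank decomposition, apply Sylvester's inequality, and observe that the columns of $A$ and $B$ are vectorised decomposable tensors whose span contains the slice spaces, giving $\rk A \geq \drk_J\Tcal$ and $\rk B \geq \drk_{J^c}\Tcal$. One small imprecision in your symmetric case: over $\RR$ with $d$ even, a minimal symmetric decomposition has the form $\Tcal = \sum_i \lambda_i u_i^{\otimes d}$ with nonzero scalars $\lambda_i$ that cannot always be absorbed into $u_i$ (since $(-u)^{\otimes d} = u^{\otimes d}$), so the factorisation is really $\Tcal^{(j)} = A\Lambda B\T$ with $\Lambda$ diagonal; this is harmless because $\rk(\Lambda B\T) = \rk B$, which is exactly how the paper handles it.
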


Theorem~\ref{thm:A} gives a tight lower bound on the rank of the quaternary quartic polynomial (or, symmetric $4 \times 4 \times 4 \times 4$ tensor) 
\begin{equation}
    \label{eqn:quartic_ex}
    x^4-3y^4+ 12x^2yz+12xy^2w,
\end{equation}
see Corollary \ref{cor:geq12} and Proposition \ref{prop:eq12}.
The coefficients ensure integer entries in the tensor.
This polynomial is the starting point to the construction of a real counterexample to Comon's conjecture from~\cite{shitov2020comon}. A tight lower bound is not possible via the substitution method, by lower bounding by the rank of a single unfolding, or using the lower bound in~\cite{landsberg2010ranks}.

The paper~\cite{shitov2020comon} constructs an order four counterexample to Comon's conjecture. The paper also gives a framework for the construction of counterexamples to Comon's conjecture. We make a small simplification, removing the need for two conditions.
 We break down the construction into three steps. 
The last step is to prove a conjecture to lower bound the real symmetric rank of a tensor of interest. This conjecture (Conjecture~\ref{conj:modspan}) is the real analogue to~\cite[Conjecture 6]{shitov2018counterexample}.
Proving Conjecture~\ref{conj:modspan} would give a clearer path to finding more counterexamples to Comon's conjecture.
The paper~\cite{shitov2020comon} states that the construction potentially allows one to construct counterexamples for tensors of any even order $d \geq 4$.
Our second main result is to resolve the next case $d = 6$ using combinatorial and linear algebraic arguments. 

\begin{theorem}
\label{thm:B}
There is an order six real tensor whose rank and symmetric rank differ.
\end{theorem}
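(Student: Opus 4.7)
The plan is to execute the three-step framework described above to produce an explicit order six symmetric real tensor $T$ with $\rk T < \srk T$. At a high level, Step~1 builds a linear space of binary tensors whose decomposable spanning sets can be controlled; Step~2 embeds this data into the format of $T$ and produces, along the way, an explicit rank decomposition, hence an upper bound $\rk T \leq r$ for a concrete integer $r$; Step~3 applies Theorem~\ref{thm:A} together with a verification of Conjecture~\ref{conj:modspan} to conclude $\srk T \geq r + 1$.

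For Step~1, I would identify an order six analogue of the quartic~\eqref{eqn:quartic_ex}, namely a sextic form in a few variables whose natural binary structure mirrors the features that made~\eqref{eqn:quartic_ex} amenable to Theorem~\ref{thm:A}. The slice spaces of this seed sextic should be small, and I would verify by a direct linear algebra calculation in $(\RR^2)^{\otimes 6}$ that they admit a spanning set of decomposable binary tensors of controlled size. For Step~2, I would transport this binary data to a linear space of decomposable tensors in the ambient format of $T$, producing an explicit rank decomposition and hence the upper bound on $\rk T$. The parameters of the seed are chosen so that this upper bound is smaller by one than the lower bound that Theorem~\ref{thm:A} will yield on $\srk T$ once the remaining ingredient is in place.

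The main obstacle is Step~3: verifying Conjecture~\ref{conj:modspan} for this specific $T$. Concretely, with $J \subset [6]$ of size three, Theorem~\ref{thm:A} yields
\[
    \srk T \, \geq \, 2 \sdrk_3 T - \rk T^{(3)},
\]
so what is needed is a sharp lower bound on $\sdrk_3 T$. I would approach this by a case analysis on the symmetric decomposable tensors in $(\RR^I)^{\otimes 3}$ whose linear span could contain the slice space $\Lcal_3$. Using the symmetry of $T$ and the combinatorial structure inherited from the seed sextic, one forces rank or incidence obstructions on any hypothetical small spanning family: its members must respect the supports and symmetries dictated by the slices of $T$, and this in turn constrains the possible rank-one summands enough that fewer than the target number of them cannot span $\Lcal_3$. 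This combinatorial and linear algebraic verification is the content specific to $d=6$ and is the key technical ingredient of the theorem; once it is in hand, the inequality above and the upper bound from Step~2 give $\srk T > \rk T$, as required.
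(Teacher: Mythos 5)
Your overall frame (the three-step construction, rank upper bound from an explicit decomposition plus a symmetric-rank lower bound one higher) matches the paper, but your Step 3 is not the argument that actually closes the proof, and as proposed it would not go through. The paper does \emph{not} verify Conjecture~\ref{conj:modspan} for the order six tensor via Theorem~\ref{thm:A}. The tensor in question is $\SAdj(\Ccal_c,\Wcal)$ with $\Ccal = x^6-3y^6$, $\Ccal_c$ its clone, and $\Wcal$ the explicit set of $5152$ decomposable symmetric order five tensors of Definition~\ref{def:Wset}; here $\dim\Span\Wcal = 5152$. Theorem~\ref{thm:A} is only shown to prove the conjecture when $\dim\Span\Mcal = 1$ (or under strong joint linear independence of all the $\ell_{v_i},\ell_{w_i},x_j$), and Remark~\ref{rmk:conj_more} states explicitly that extending the $\sdrk_\delta$ argument to $k>1$ runs into the unresolved problem of ruling out overlap among the decompositions of the many adjoined pieces $v_i^{\delta-1}w_i$. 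Your proposed ``case analysis on symmetric decomposable tensors spanning $\Lcal_3$'' is exactly that open problem, now for $k = 5152$, and you give no mechanism for resolving it. The paper circumvents it entirely: Proposition~\ref{prop:shitov2} shows that if $\Wcal$ satisfies the combinatorial conditions (2), (3), (4e), (4$\epsilon$), (6), then Conjecture~\ref{conj:modspan} holds for $\SAdj(\Ccal_c,\Wcal)$, by invoking the machinery of Shitov's real counterexample (Procedure 8.6, Lemma 8.14, Claim 9.3, Lemma 9.4 of that paper). The $d=6$-specific technical work is the design of $\Wcal$ and the verification of (3), (4e), (4$\epsilon$), (6) and linear independence (Propositions~\ref{prop:cond3}, \ref{prop:4e}, \ref{prop:cond6}, \ref{prop:li_6}), not a lower bound on $\sdrk_3$.

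Two further points. First, your Step 1 is framed around a sextic ``seed'' analogous to the quartic~\eqref{eqn:quartic_ex}, but that quartic is \emph{not} a counterexample (its rank and symmetric rank are both $12$, Proposition~\ref{prop:eq12}); the relevant Step 1 datum is the gap $\minrk(\Ccal\Mod\Mcal) < \minsrk(\Ccal\Mod\Mcal)$ for $\Ccal = x^6 - 3y^6$ and $\Mcal = \{x^4y, x^3y^2, x^2y^3, xy^4\}$, which is Remark~\ref{rem:CmodM}, not a Theorem~\ref{thm:A} computation. Second, your Step 2 glosses over a genuine obstruction: simply replacing $\Mcal$ by decomposable tensors spanning it destroys the gap (Proposition~\ref{prop:rank_one_slices} shows $\minrk$ drops to $0$). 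The cloning construction and the delicate choice of $\Wcal$ in Definition~\ref{def:Wset}, with the conditions of Proposition~\ref{prop:shitov1}, are precisely what preserve the strict inequality, and without them the upper bound on $\rk T$ and the lower bound on $\srk T$ cannot be made to differ.
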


The rest of this paper is organised as follows. We outline preliminaries in Section~\ref{sec:prelim}. We prove Theorem~\ref{thm:A} in Section~\ref{sec:decomposable_flattening}, where we also state Conjecture~\ref{conj:modspan} and use Theorem~\ref{thm:A} to prove it in special cases. In Section~\ref{sec:idea} we describe three steps to construct a counterexample to Comon's conjecture, extracted from~\cite{shitov2020comon}. We construct an order six counterexample in Section~\ref{sec:order6}, with some proofs given in~\ref{sec:appendix}. We conclude with some open problems.

\section{Preliminaries}
\label{sec:prelim}

For background on tensors see~\cite{landsberg2012tensors} and~\cite{hackbusch2012tensor}.
Recall the definitions of flattenings and slices from Definitions~\ref{def:flatten} and~\ref{def:slices}.

\begin{theorem}[{The real substitution method, see~\cite[Lemma B.1]{alexeev2011tensor},~\cite[Theorem 4.4]{seigal2020ranks},~\cite[Lemma 4.6]{shitov2020comon}}]
\label{thm:substitution}
Fix $\Tcal \in  \mathbb{R}^{I_1}\otimes\cdots\otimes\mathbb{R}^{I_d}$ with $j$ slices $\Tcal^j_1,\cdots,\Tcal^j_{n}$, where $I_j = [n]$.
There exist $c_1,\ldots,c_{n-1} 
\in \RR$ such that 
$$
\rk\Tcal \, \geq \, \rk(\Tcal_1^j+c_1\Tcal_{n}^j|\cdots|\Tcal_{n-1}^j+c_{n-1}\Tcal_{n}^j)+1.
$$
Equality holds if the slice $\Tcal_{n}^j$ is decomposable.
\end{theorem}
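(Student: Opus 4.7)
The plan is the standard substitution argument: exhibit an explicit decomposition of the modified tensor with one fewer summand than $\rk \Tcal$, using a minimum-rank decomposition of $\Tcal$. Write $r = \rk \Tcal$ and fix $\Tcal = \sum_{k=1}^r v_{k,1}\otimes\cdots\otimes v_{k,d}$. Set $u_k := v_{k,1}\otimes\cdots\otimes v_{k,j-1}\otimes v_{k,j+1}\otimes\cdots\otimes v_{k,d}$ and $\alpha_{k,i} := v_{k,j}(i) \in \RR$, so that the $j$-slices read $\Tcal^j_i = \sum_{k=1}^r \alpha_{k,i}\, u_k$ for $i = 1,\ldots,n$.

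For the inequality, I would pick an index $k_0 \in \{1,\ldots,r\}$ with $\alpha_{k_0,n} \neq 0$ (such a $k_0$ exists whenever $\Tcal^j_n$ is nonzero, which is the only interesting case) and set $c_i := -\alpha_{k_0,i}/\alpha_{k_0,n}$ for $i = 1,\ldots,n-1$. A short computation shows the coefficient of $u_{k_0}$ in $\Tcal^j_i + c_i \Tcal^j_n$ vanishes, so every modified slice lies in $\Span\{u_k : k \neq k_0\}$. Packaging the $n-1$ modified slices as a tensor $\widetilde{\Tcal}$ whose $j$-mode has size $n-1$ yields a decomposition with at most $r-1$ rank-one terms, giving $\rk \widetilde{\Tcal} \leq r-1 = \rk \Tcal - 1$.

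For the equality statement, suppose $\Tcal^j_n$ is decomposable. Let $\widetilde{\Tcal}^{\mathrm{ext}}$ be the zero-extension of $\widetilde{\Tcal}$ along mode $j$ back to size $n$. Expanding $\Tcal$ in terms of its $j$-slices and rearranging yields
$$\Tcal \;=\; \widetilde{\Tcal}^{\mathrm{ext}} + \Big(e_n^{(j)} - \sum_{i=1}^{n-1} c_i e_i^{(j)}\Big) \otimes \Tcal^j_n,$$
where $e_i^{(j)}$ denotes the $i$-th standard basis vector of $\RR^n$ inserted in mode $j$. The correction term has rank equal to $\rk \Tcal^j_n = 1$, so $\rk \Tcal \leq \rk \widetilde{\Tcal} + 1$, matching the lower bound and yielding equality.

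The main conceptual obstacle is that the $c_i$'s are not intrinsic to $\Tcal$: they depend both on the choice of rank-$r$ decomposition and on the index $k_0$ of the rank-one summand to cancel, which is why the statement only asserts their existence rather than giving a formula. The requirement $\Tcal^j_n \neq 0$ for the existence of $k_0$ explains why, in applications, one first permutes slice indices so that the distinguished slice is nonzero (and, for the equality version, decomposable). A minor bookkeeping step is to verify that the zero-extension $\widetilde{\Tcal}^{\mathrm{ext}}$ has the same rank as $\widetilde{\Tcal}$, which follows because embedding the mode-$j$ space of dimension $n-1$ into dimension $n$ sends rank-one tensors to rank-one tensors.
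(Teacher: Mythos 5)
The paper states this result with citations to \cite{alexeev2011tensor,seigal2020ranks,shitov2020comon} and does not supply its own proof, so there is no internal argument to compare against; your proof is the standard substitution argument used in those references and is correct. Cancelling the $k_0$th summand gives the lower bound, and the explicit identity $\Tcal = \widetilde{\Tcal}^{\mathrm{ext}} + \big(e_n^{(j)} - \sum_i c_i e_i^{(j)}\big)\otimes\Tcal^j_n$ gives the matching upper bound when $\Tcal^j_n$ is decomposable; the zero-extension step is fine since inserting mode $j$ into a larger ambient space preserves rank.

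Your remark that the lower bound needs $\Tcal^j_n\neq 0$ is more than bookkeeping: as written the theorem actually fails when $\Tcal^j_n = 0$. Take $\Tcal = e_1\otimes e_1\otimes e_1 \in (\RR^2)^{\otimes 3}$ with $j=1$, $n=2$: then $\Tcal^1_2 = 0$, the modified tensor $\Tcal^1_1 + c_1\Tcal^1_2 = e_1\otimes e_1$ has rank $1$ for every $c_1$, and the asserted inequality would read $1 \geq 2$. The cited sources include a nonzero-slice hypothesis that the paper's restatement leaves tacit, and your proof makes exactly the right choice by pinning it down at the point where $k_0$ is chosen.
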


Following~\cite[Section 4]{shitov2020comon}, we define some linear operations on tensors. 
We keep most notation consistent with~\cite{shitov2020comon}.
Fix $\Ccal \in  \mathbb{R}^{I_1}\otimes\cdots\otimes\mathbb{R}^{I_d}$ and consider $d$ finite sets of order $(d-1)$ tensors
$$ \Mcal_j \subset\mathbb{R}^{I_1}\otimes\cdots \otimes \mathbb{R}^{I_{j-1}}\otimes\mathbb{R}^{I_{j+1}}\otimes\cdots\otimes\mathbb{R}^{I_d},  \qquad j \in \{ 1, \ldots, d \}.$$
We index the tensors in $\Mcal_j$ by the set $W_j$.

\begin{definition}[{Adjoining slices to a tensor, see~\cite[Definitions 4.7 and 4.8]{shitov2020comon}}]
\label{def:adj}
The {\em adjoining} of $\Mcal_1, \ldots, \Mcal_d$ to $\Ccal$ is the tensor 
$$ \Tcal: =\Adjoin(\mathcal{C},\Mcal_1,\ldots,\Mcal_d) \in  \mathbb{R}^{I_1\cup W_1}\otimes\cdots \otimes \mathbb{R}^{I_d\cup W_d} , $$
with entries
\begin{enumerate}
    \item $\Tcal(k_1|\ldots|k_d)=\Ccal(k_1|\ldots|k_d)$ if $k_j\in I_j$ for all $j\in\{1,\ldots,d\}$
    \item 
    $\Tcal(k_1|\ldots|k_{j-1}| w| k_{j+1}|\ldots | k_d)=\Mcal_j^{(w)}(k_1|\ldots|k_{j-1}|k_{j+1}|\ldots|k_d)$ if $k_h \in I_h$ for all $h \neq j$ and $w \in W_j$, 
where $\Mcal_j^{(w)}$ is the tensor in $\Mcal_j$ indexed by $w$. 
\item $\Tcal(k_1|\ldots|k_d) = 0$ otherwise, i.e. if $k_j \notin I_j$, for more than one $j \in \{ 1 , \ldots, d\}$.
\end{enumerate}
See Figure~\ref{fig:adjoing} for an illustration. If $I:= I_1=\ldots=I_d$ and $\Mcal := \Mcal_1=\ldots=\Mcal_d$ is a finite set of tensors indexed by $W$, the {\em symmetric adjoining} of $\Mcal$ to $\Ccal$ is
$$
{\SAdj}(\Ccal,\Mcal):=\Adjoin(\Ccal,\Mcal,\ldots,\Mcal) \in (\RR^{I \cup W})^{\otimes d}.
$$
\end{definition}

\begin{figure}
    \centering
    \includegraphics[width=5cm]{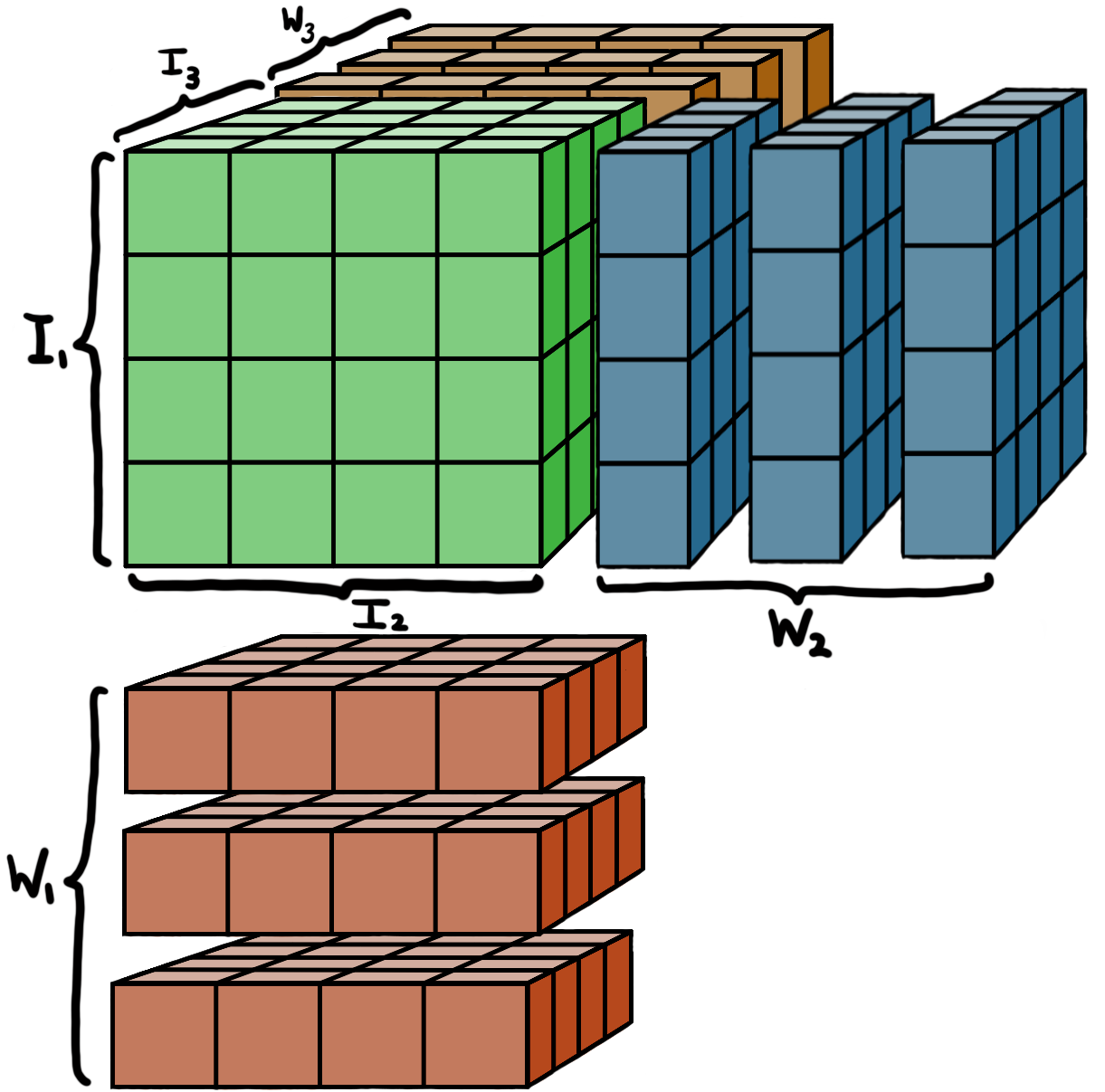}
    \caption{Illustration of Definition~\ref{def:adj}. Three sets $\Mcal_j$ are adjoined to a tensor in $\RR^{I_1} \otimes \RR^{I_2} \otimes \RR^{I_3}$ to produce a tensor in $\RR^{I_1 \cup W_1} \otimes \RR^{I_2 \cup W_2} \otimes \RR^{I_3 \cup W_3}$.}
    \label{fig:adjoing}
\end{figure}

\begin{definition}[{\cite[Definition 4.4]{shitov2020comon}}]
The set $\Ccal \Mod (\Mcal_1,\ldots,\Mcal_d)$ is the linear space
of tensors obtained from~$\Ccal$ by adding an element of $\Span \Mcal_j$ to every $j$ slice of~$\Ccal$, for all $j \in \{1, \ldots, d\}$. 
That is, $\Ccal \Mod (\Mcal_1,\ldots,\Mcal_d)$ is the space of tensors with entries
$$ \Ccal(k_1 | \ldots | k_d) + M_1^{(k_1)} ( k_2 | \ldots | k_d) + M_2^{(k_2)} ( k_1 | k_3 | \ldots | k_d) + \cdots + M_d^{(k_d)} ( k_1 | \ldots | k_{d-1} ) ,$$
where $M_j^{(k_j)} \in \Span \Mcal_j$ for all $j \in \{ 1, \ldots, d \}$ and all $k_j \in I_j$. 
If $I_1=\ldots=I_d$ and $\Mcal := \Mcal_1=\ldots=\Mcal_d$, 
we define $\mathcal{C} \Mod \mathcal{M} := \mathcal{C} \Mod (\mathcal{M},\ldots,\mathcal{M})$.
The elements of $\Span \Mcal$ added to each of the $j$ slices of $\Ccal$ need not be the same.
\end{definition}

Given a set of tensors $\mathcal{A}$, define $\minrk \mathcal{A}$ to be the minimal rank of a tensor in $\mathcal{A}$. We have the following consequence of Theorem~\ref{thm:substitution}.

\begin{corollary}[{\cite[Lemma 4.10]{shitov2020comon}}]
\label{cor:adoined_substitution}
Fix $\Ccal$ and $\Mcal_1, \ldots, \Mcal_d$ as above. Then
$$
\rk \Adjoin (\mathcal{C},\Mcal_1,\ldots,\Mcal_d) \geq \minrk \left( \mathcal{C} \Mod (\Mcal_1,\ldots,\Mcal_d) \right) +\sum_{j=1}^{d} \dim \Span \Mcal_j.
$$
Equality holds if each linear space $\Span \Mcal_j$ has a basis of  decomposable tensors.
\end{corollary}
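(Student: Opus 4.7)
The plan is to iterate the real substitution method (Theorem~\ref{thm:substitution}) once per independent direction in each $\Span \Mcal_j$, peeling off the $W_j$-slices of $\Tcal$ one at a time, and to establish the equality (in the decomposable case) by exhibiting a matching explicit decomposition of $\Tcal$.

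For the lower bound, since tensor rank is invariant under invertible changes of basis applied in a single mode, I may first act on the $\RR^{W_j}$ factor of mode $j$ to assume without loss of generality that $|W_j| = r_j := \dim \Span \Mcal_j$ and that the tensors in $\Mcal_j$ are linearly independent. Starting in mode $1$, apply Theorem~\ref{thm:substitution} with any $w^* \in W_1$ taken as the ``last'' slice. By part (3) of Definition~\ref{def:adj}, the slice $\Tcal_{w^*}^1 = \widetilde{\Mcal}_1^{(w^*)}$ is supported only at indices with $k_h \in I_h$ for all $h \neq 1$, so the substitution modifies the remaining mode-$1$ slices only on the core region $I_1 \times \cdots \times I_d$. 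A short unravelling shows that the resulting tensor equals $\Adjoin(\Ccal', \Mcal_1', \Mcal_2, \ldots, \Mcal_d)$, where $\Ccal' \in \Ccal \Mod (\Mcal_1, \emptyset, \ldots, \emptyset)$ and $\Mcal_1' = \{ \Mcal_1^{(w)} + c_w \Mcal_1^{(w^*)} : w \in W_1 \setminus \{w^*\} \}$ is linearly independent with $\dim \Span \Mcal_1' = r_1 - 1$. Iterating $r_1$ times in mode $1$, then $r_2$ times in mode $2$, and so on through mode $d$, produces after $\sum_j r_j$ steps a tensor $\Ccal^\star \in \Ccal \Mod (\Mcal_1, \ldots, \Mcal_d)$, and concatenating the resulting inequalities yields
\[ \rk \Tcal \,\geq\, \rk \Ccal^\star + \sum_{j=1}^d r_j \,\geq\, \minrk (\Ccal \Mod (\Mcal_1, \ldots, \Mcal_d)) + \sum_{j=1}^d \dim \Span \Mcal_j. \]

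For equality when each $\Span \Mcal_j$ admits a decomposable basis $v_{j,1}, \ldots, v_{j,r_j}$, I would instead upper bound $\rk \Tcal$ by exhibiting a decomposition. Given any $\Ccal' \in \Ccal \Mod (\Mcal_1, \ldots, \Mcal_d)$, take a minimal rank decomposition of $\Ccal'$ and extend each of its rank-one summands by zero to rank-one tensors in $\bigotimes_h \RR^{I_h \cup W_h}$. For each pair $(j,i)$ with $1 \leq j \leq d$ and $1 \leq i \leq r_j$, add a single rank-one tensor $\widetilde{v}_{j,i} \otimes u_{j,i}$, where $\widetilde{v}_{j,i}$ is the (still decomposable) zero-extension of $v_{j,i}$ and $u_{j,i} \in \RR^{I_j \cup W_j}$ is chosen so that its $W_j$-coordinates express each $\Mcal_j^{(w)}$ in the basis $\{v_{j,i}\}_i$ while its $I_j$-coordinates absorb the slice modifications that distinguish $\Ccal'$ from $\Ccal$. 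Checking entries on the core, on the ``one $W$-index'' region, and on the ``two or more $W$-indices'' region confirms that this sum equals $\Tcal$, giving $\rk \Tcal \leq \rk \Ccal' + \sum_j r_j$ and hence equality after minimising over $\Ccal'$.

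The main obstacle is the book-keeping in the lower bound: one must verify that each application of Theorem~\ref{thm:substitution} really does preserve the $\Adjoin$ structure, with $\dim \Span \Mcal_j$ decreased by exactly one and $\Ccal$ altered only within the coset $\Ccal \Mod (\Mcal_1, \ldots, \Mcal_d)$. This is routine once the zero-outside-core support of the adjoined slices $\widetilde{\Mcal}_j^{(w)}$ is tracked, but cleanly identifying the intermediate tensors as further adjoinings is the step requiring the most care.
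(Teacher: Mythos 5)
Your proposal is correct and follows essentially the route the paper intends: the corollary is stated there as a consequence of Theorem~\ref{thm:substitution} (citing Shitov's Lemma~4.10), and that argument is precisely your iterated substitution on the adjoined $W_j$-slices (after reducing to linearly independent $\Mcal_j$) for the lower bound, together with the explicit decomposition --- a zero-extended minimal decomposition of some $\Ccal'\in\Ccal\Mod(\Mcal_1,\ldots,\Mcal_d)$ plus one rank-one term $\widetilde v_{j,i}\otimes u_{j,i}$ per basis element of each $\Span\Mcal_j$ --- for the equality case. I see no gaps; your bookkeeping that each substitution step preserves the adjoin structure, keeps the remaining adjoined slices linearly independent (hence nonzero), and only perturbs the core within the coset $\Ccal\Mod(\Mcal_1,\ldots,\Mcal_d)$ is exactly what is needed.
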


\section{Lower bounds on the rank and symmetric rank} 
\label{sec:decomposable_flattening}

In this section, we study the decomposable flattening rank and symmetric decomposable flattening rank, from Definitions~\ref{def:drk} and~\ref{def:sdrk}. We combine the notions with Sylvester's rank inequality to prove Theorem~\ref{thm:A}. This result enables us to find the rank of a tensor by 
studying the decomposable matrices in a certain linear space. We see in examples that our new lower bounds can improve on existing lower bounds.
We discuss a symmetric analogue to the real substitution method in Conjecture~\ref{conj:modspan}.
Although we focus on real ranks, much of what we discuss extends to complex ranks.

\subsection{Decomposable flattening rank}

Recall the definitions of flattenings from Definition~\ref{def:flatten}, the slice space from Definitions~\ref{def:Lj}, and the decomposable flattening rank from Definition~\ref{def:drk}.

\begin{example}\label{exmp:L_2}
Let $\Tcal = x^3y$. Then $\rk \Tcal=4$~\cite[Proposition 5.6]{comon2008symmetric}. We have
$$
\Tcal^{(2)}=
\begin{pmatrix}
0&1&1&0\\
1&0&0&0\\
1&0&0&0\\
0&0&0&0\\
\end{pmatrix}
\quad \text{and} \quad 
\Lcal_2 = \left\{ \begin{pmatrix} a_1 & a_2 \\ a_2 & 0 \end{pmatrix} \mid a_1, a_2 \in \RR \right\}.
$$
Observe that $\rk \Tcal^{(2)}=2$. Moreover, $\drk_2 \Tcal \in \{ 2, 3\}$, since the space of $2 \times 2$ symmetric matrices has dimension~$3$. Assume $\drk_2 \Tcal = 2$, for contradiction. Then $\Lcal_2 \subseteq \langle M_1, M_2 \rangle$ for some decomposable $M_1, M_2 \in \RR^{2 \times 2}$. Since $\Lcal_2$ is two-dimensional, this containment is an equality, hence $M_1, M_2 \in \Lcal_2$. But any decomposable matrix in $\Lcal_2$ has $a_2=0$, hence $M_1$ and $M_2$ are collinear, a contradiction. Hence $\drk_2 \Tcal =3$. For this example, $\rk \Tcal^{(2)} < \drk_2 \Tcal < \rk \Tcal$. 
\end{example}

\begin{proposition}
\label{prop:orderJ+1}
The decomposable flattening rank $\drk_J \Tcal$ is the rank of the order $|J| + 1$ unfolding of $\Tcal$ whose $|J|+1$ slices are $\{ \Tcal_\ii^{(J^c)} \mid \ii \in \times_{h \notin J} I_h \}$.
\end{proposition}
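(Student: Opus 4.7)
The plan is to prove the equality $\drk_J \Tcal = \rk U$, where $U$ denotes the order-$(|J|+1)$ unfolding of $\Tcal$ corresponding to the partition $[d] = \big(\bigcup_{j \in J} \{j\}\big) \cup J^c$, by establishing both inequalities. The key observation is that $U$ lives in the two-factor tensor space $\big(\otimes_{j \in J}\RR^{I_j}\big) \otimes \RR^{\prod_{h \notin J} I_h}$, and that under the usual definition of rank, a rank-one tensor in this space has the form $D \otimes c$, where $D \in \otimes_{j \in J}\RR^{I_j}$ is decomposable (i.e.\ $D = v_1 \otimes \cdots \otimes v_{|J|}$) and $c \in \RR^{\prod_{h \notin J} I_h}$ is a vector. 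Moreover, the $\ii$th slice of $U$ in its last mode equals $\Tcal_\ii^{J^c}$.

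For $\rk U \leq \drk_J \Tcal$: starting from decomposable tensors $D_1, \ldots, D_r$ in $\otimes_{j \in J}\RR^{I_j}$ whose span contains $\Lcal_J$, with $r = \drk_J \Tcal$, I would write each slice as $\Tcal_\ii^{J^c} = \sum_{k=1}^r c_k(\ii)\, D_k$, assemble the scalars into vectors $c_k := (c_k(\ii))_\ii$, and obtain the rank-$r$ decomposition $U = \sum_{k=1}^r D_k \otimes c_k$, since each $D_k \otimes c_k$ is rank one in $U$'s ambient space.

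For $\drk_J \Tcal \leq \rk U$: starting from a rank-$r$ decomposition $U = \sum_{k=1}^r D_k \otimes c_k$ with each $D_k$ decomposable in $\otimes_{j \in J}\RR^{I_j}$ and each $c_k$ a vector, reading off the $\ii$th slice in the last mode gives $\Tcal_\ii^{J^c} = \sum_{k=1}^r c_k(\ii)\, D_k$, so $\Lcal_J \subseteq \langle D_1, \ldots, D_r\rangle$, and thus $\drk_J \Tcal \leq r$.

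The proof is essentially a bookkeeping bijection: rank-one decompositions of the order-$(|J|+1)$ unfolding are in one-to-one correspondence (up to grouping scalars) with decomposable spanning sets for the slice space $\Lcal_J$. The only point that requires care is the initial observation that rank-one tensors in a two-factor product space $A \otimes B$ always factor as a (decomposable tensor in $A$) $\otimes$ (vector in $B$), which is what makes the two notions of ``rank $r$'' literally match. There is no substantive technical obstacle beyond setting up the slice-to-last-mode identification correctly.
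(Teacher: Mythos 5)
Your proof is correct and takes the same approach as the paper: both directions translate between decomposable spanning sets of $\Lcal_J$ and rank-one decompositions of the order-$(|J|+1)$ unfolding, exactly as in the paper's argument. One caution about the closing remark: the claim that rank-one elements of a \emph{two-factor} product $A \otimes B$ always have a decomposable $A$-component is false when $A$ is itself a tensor product (e.g.\ $(e_1 \otimes e_1 + e_2 \otimes e_2) \otimes c$ has two-factor rank one); what you actually use, and correctly, is that the rank in the proposition is the order-$(|J|+1)$ tensor rank, so rank-one summands are of the form $v_1 \otimes \cdots \otimes v_{|J|} \otimes c$ by definition, and no ``matching of two notions of rank'' is needed.
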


\begin{proof}
Denote the order $|J|+1$ unfolding by $\Scal$. Let $\{\Ucal_1,\ldots,\Ucal_{r}\}$ be decomposable tensors whose span contains $\Lcal_{J}$, where $r=\drk_J \Tcal$. Each $\Tcal_\ii^{(J^c)}$ can then be written as a linear combination of $\Ucal_1,\ldots,\Ucal_{r}$, say $\Tcal_{\ii}^{(J^c)}=\sum_{k=1}^r c_{\ii}^{(k)}\Ucal_k$. These linear combinations combine to give an expression for $\Scal$ as a sum of $r$ decomposable tensors
\begin{equation}
    \label{eqn:Stensor}
    \Scal=\sum_{\ii\in\times_{h \notin J} I_h} \sum_{k=1}^r c_{\ii}^{(k)}\Ucal_k\otimes e_{\ii} = \sum_{k=1}^r \Ucal_k\otimes \left( \sum_{\ii\in\times_{h \notin J} I_h}c_{\ii}^{(h)}e_{\ii} \right).
\end{equation}
Hence $\rk \Scal \leq \drk_J \Tcal$.
Conversely, if $\Scal$ is the sum of $r'$ decomposable tensors $\{ x_i^{(1)}\otimes\ldots\otimes x_i^{(|J|+1)} \mid i\in\{1,\ldots,r' \}\}$, then each $|J|+1$ slice of $\Scal$ lies in  $\langle x_i^{(1)}\otimes\ldots\otimes x_i^{(|J|)} \mid i\in\{1,\ldots,r'\} \rangle$, hence $\rk\Scal \geq \drk_J \Tcal.$ In conclusion, $\drk_J \Tcal = \rk \Scal$.
\end{proof}

\begin{proposition}\label{prop:drkineq}
Fix $\Tcal \in \RR^{I_1} \otimes \cdots \otimes \RR^{I_d}$ and $J \subset [d]$. Then
\begin{itemize}
\item[(i)] We have $\rk \Tcal^{(J)} \leq \drk_J \Tcal \leq \rk \Tcal$,
\item[(ii)] If $|J| = 1$ then $\drk_J \Tcal = \rk \Tcal^{(J)}$,
\item[(iii)] If $|J| = d-1$ then $\drk_J \Tcal = \rk \Tcal$, and
\item[(iv)] If $J' \subset J \subset [d]$ then $\drk_{J'} \Tcal \leq \drk_J \Tcal$.
\end{itemize}
\end{proposition}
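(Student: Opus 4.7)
My plan is to handle the four parts in turn. Parts (i)--(iii) largely unpack definitions, while part (iv) is the one requiring a small construction.

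For part (i), the inequality $\rk \Tcal^{(J)} \le \drk_J \Tcal$ is immediate from the definition: if $U_1,\ldots,U_r$ are decomposable with $r = \drk_J\Tcal$ and $\Lcal_J \subseteq \Span\{U_1,\ldots,U_r\}$, then each slice $\Tcal_\ii^{J^c}$ is a linear combination of the $U_k$, so its vectorisation is a linear combination of $\Vect U_1,\ldots,\Vect U_r$. Since those vectorisations are exactly the columns of $\Tcal^{(J)}$ by the remark after Definition~\ref{def:slices}, $\rk \Tcal^{(J)} \le r$. For the right inequality $\drk_J \Tcal \le \rk\Tcal$, take a rank decomposition $\Tcal = \sum_{k=1}^{\rk \Tcal} v_k^{(1)} \otimes \cdots \otimes v_k^{(d)}$; each slice $\Tcal_\ii^{J^c}$ then equals $\sum_k \bigl(\prod_{h \notin J} v_k^{(h)}(\ii_h)\bigr) \bigotimes_{j \in J} v_k^{(j)}$, and the $\rk \Tcal$ decomposable tensors $\bigotimes_{j \in J} v_k^{(j)} \in \otimes_{j \in J}\RR^{I_j}$ have span containing $\Lcal_J$.

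For part (ii), when $|J|=1$ every nonzero vector in the ambient space $\RR^{I_j}$ is decomposable, so the minimal number of decomposable tensors whose span contains $\Lcal_J$ is exactly $\dim \Lcal_J$, and $\Lcal_J$ is the column space of $\Tcal^{(J)}$. For part (iii), when $|J| = d-1$ I apply Proposition~\ref{prop:orderJ+1}: $\drk_J \Tcal$ is the rank of the order-$d$ unfolding whose slices along one axis are $\{\Tcal_\ii^{J^c} : \ii \in I_{h_0}\}$, where $\{h_0\} = J^c$. This unfolding is $\Tcal$ itself up to a permutation of modes, which does not affect the rank.

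Part (iv) carries the real content. Given $r = \drk_J \Tcal$ decomposable tensors $U_k = \bigotimes_{j \in J} v_k^{(j)}$ whose span contains $\Lcal_J$, I set $J'' := J \setminus J'$ and define $W_k := \bigotimes_{j \in J'} v_k^{(j)}$, which are decomposable in $\otimes_{j \in J'}\RR^{I_j}$. Since $J' \subset J$ implies $J^c \subset (J')^c$, we have the disjoint decomposition $(J')^c = J^c \sqcup J''$. Any index $\ii \in \times_{h \notin J'} I_h$ thus splits as $\ii = (\ii', \ii'')$ with $\ii' \in \times_{h \in J^c} I_h$ and $\ii'' \in \times_{h \in J''} I_h$, and the slice $\Tcal_\ii^{(J')^c}$ equals the $J''$-slice $(\Tcal_{\ii'}^{J^c})_{\ii''}^{J''}$. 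Taking the $\ii''$ $J''$-slice of $U_k$ produces $\bigl(\prod_{j \in J''} v_k^{(j)}(\ii''_j)\bigr) W_k$, a scalar multiple of $W_k$. Since $\Tcal_{\ii'}^{J^c} \in \Span\{U_1,\ldots,U_r\}$, we conclude that each slice $\Tcal_\ii^{(J')^c}$ lies in $\Span\{W_1,\ldots,W_r\}$, giving $\Lcal_{J'} \subseteq \Span\{W_1,\ldots,W_r\}$ and hence $\drk_{J'} \Tcal \le r$. The main obstacle is the index bookkeeping in this part: correctly splitting $(J')^c$ as $J^c \sqcup J''$ and verifying that slicing a rank-one tensor along a subset of its modes yields a scalar multiple of a rank-one tensor on the remaining modes.
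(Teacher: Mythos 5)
Your proof is correct, and all four parts go through. However, it takes a somewhat different route from the paper's, which is a one-liner: by Proposition~\ref{prop:orderJ+1}, $\drk_J \Tcal$ equals the rank of the order $|J|+1$ unfolding $\Scal_J$ with $|J|+1$ slices $\{\Tcal_\ii^{J^c}\}$, and all four statements then follow from the single observation that a decomposition of a tensor induces a decomposition of each of its unfoldings. Concretely: $\Tcal^{(J)}$ is a further flattening of $\Scal_J$ and $\Scal_J$ is an unfolding of $\Tcal$, giving (i); when $|J|=1$ one has $\Scal_J = \Tcal^{(J)}$, giving (ii); when $|J|=d-1$ one has $\Scal_J = \Tcal$ up to reordering modes, giving (iii); and when $J' \subset J$, the tensor $\Scal_{J'}$ is an unfolding of $\Scal_J$ (merge the modes in $J\setminus J'$ into the last mode), giving (iv). You instead argue (i), (ii), and (iv) directly from Definition~\ref{def:drk}, invoking Proposition~\ref{prop:orderJ+1} only for (iii). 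Your argument for (iv) — slicing the rank-one spanning tensors $U_k = \bigotimes_{j\in J} v_k^{(j)}$ along $J'' = J\setminus J'$ to produce scalar multiples of $W_k = \bigotimes_{j\in J'} v_k^{(j)}$ — is in effect a reproof of the ``unfolding of an unfolding'' fact in this special case, and the index bookkeeping you carry out is exactly what Proposition~\ref{prop:orderJ+1} is designed to hide. The paper's route is slicker and makes all four parts fall out of one lemma; your route is more elementary and makes visible where the decomposable tensors come from, at the cost of redoing some of the work Proposition~\ref{prop:orderJ+1} already encapsulates.
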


\begin{proof}
Any decomposition of a tensor gives a decomposition of its unfoldings. Statements (i)-(iv) then follow from Proposition~\ref{prop:orderJ+1}.
\end{proof}

The inequalities in Proposition \ref{prop:drkineq} can be strict, see Example~\ref{exmp:L_2} and the following. 

\begin{example}
\label{exmp:L_3}
Set $\Tcal = x^4 y$, $J' = \{1, 2\}$ and $J = \{ 1,2,3\}$. Then $\drk_{J'} \Tcal =3$ and $\drk_J \Tcal =4$, as follows. The slice spaces are
$$
\Lcal_{J'}=\langle xy, x^2 \rangle \quad \text{and} 
\quad
\Lcal_{J}=\langle x^2y, x^3 \rangle.
$$
The slice space $\Lcal_{J'}$ appeared in Example \ref{exmp:L_2}, so $\drk_{J'} \Tcal =3$. We have $\drk_J \Tcal \in \{3,4\}$, since $x^2y$ has rank $3$ and $x^3$ has rank $1$. But any rank 3 decomposition of $x^2y$ does not contain $x^3$ in its span, see Lemma \ref{lem:decomp_mu}, so $\drk_J \Tcal =4$. \end{example}

The decomposable flattening rank can be studied via the ideal of decomposable tensors in a linear space. 
This gives lower bounds on the difference $\drk_J \Tcal - \rk \Tcal^{(J)}$.
We saw this idea in Example~\ref{exmp:L_2}. 
We illustrate the approach on~\eqref{eqn:quartic_ex}, a larger example.

\begin{proposition}
\label{prop:drk9}
Fix $\Tcal = x^4-3y^4+ 12x^2yz+12xy^2w$. Then $\drk_2 \Tcal =\sdrk_2 \Tcal= 9$.
\end{proposition}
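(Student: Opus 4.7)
Given $\Tcal = x^4-3y^4+12x^2yz+12xy^2w$, a direct computation of all second partial derivatives identifies the slice space as
\[ \Lcal_2 = \langle x^2,\ y^2,\ xy,\ xz+yw,\ yz,\ xw \rangle, \]
a $6$-dimensional subspace of symmetric $4\times 4$ matrices. As a linear subspace of $\RR^{4\times 4}$, it is cut out by symmetry together with the vanishing of the bottom-right $2\times 2$ block and the single coupling equation $M_{13}=M_{24}$ that comes from the mixed monomial $xz+yw$.

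For the upper bound $\sdrk_2 \Tcal \leq 9$, I would use the nine linear forms $x, y, z, w, x+y, x+z, x+w, y+z, y+w$. The polarisation identity $\ell_i \ell_j = \tfrac12\bigl((\ell_i+\ell_j)^2 - \ell_i^2 - \ell_j^2\bigr)$ shows that the span of the squares of these nine forms contains all of $x^2, y^2, z^2, w^2, xy, xz, xw, yz, yw$, and hence contains $xz+yw$ and therefore $\Lcal_2$. Since $\drk_2 \Tcal \leq \sdrk_2 \Tcal$ always, this already gives $\drk_2 \Tcal \leq \sdrk_2 \Tcal \leq 9$.

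For the matching lower bound $\drk_2 \Tcal \geq 9$, suppose $\Lcal_2 \subset V := \langle u_1 v_1^\top, \ldots, u_r v_r^\top \rangle$, and let $\pi : \RR^{4\times 4} \to \RR^{2\times 2}$ denote projection to the bottom-right $2\times 2$ block. Since $\Lcal_2 \subset \ker \pi$, rank-nullity yields
\[ r \geq \dim V \geq \dim(V \cap \ker \pi) + \dim \pi(V) \geq 6 + \dim \pi(V). \]
The easy case $\dim \pi(V) \geq 3$ immediately gives $r \geq 9$. The main obstacle is the degenerate case $\dim \pi(V) \leq 2$: the corners $(u_i)_{34}(v_i)_{34}^\top$ then span a subspace of dimension at most $2$ inside the rank-one variety of $\RR^{2\times 2}$, which has a very restricted structure (either a common left factor, a common right factor, or a pair of crossed factors). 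I would case-split on this structure, partition the generators according to whether $u_i$ or $v_i$ is supported on $\{1,2\}$, and then track which partition classes can contribute to the ``off-diagonal'' entries $M_{14}, M_{41}, M_{24}, M_{42}$ of the basis elements $xw$ and $xz+yw$. Combined with the $\Lcal_2$-specific coupling $M_{13}=M_{24}$, this combinatorial bookkeeping should rule out $r \leq 8$ even in the degenerate case. This is the delicate part of the argument and is where the specific structure of the polynomial $p$ is essentially used.
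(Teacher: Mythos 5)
Your upper bound is identical to the paper's: the nine squares of $x,y,z,w,x+y,x+z,x+w,y+z,y+w$ are exactly the list the paper uses, and the polarisation identity is the reason that list works, so $\sdrk_2 \Tcal \leq 9$. You also correctly identify the coupling $M_{13}=M_{24}$, i.e.\ the quadratic $xz+yw$; the paper writes $xz-yw$, but this is immaterial since $w \mapsto -w$ is an automorphism that changes none of the ranks involved.

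For the lower bound, your projection-plus-rank-nullity framework is a genuinely different strategy from the paper's. The paper assumes a space $\mathcal{K} \supset \Lcal_2$ with $\dim \mathcal{K} \leq 8$ exists, writes its elements as $\Lcal_2$ plus two rank-one matrices weighted by $a_7,a_8$, extracts from the $2\times 2$ minors of a decomposable element the vanishing $a_7 a_8\cdot(\text{nondegenerate factor}) = 0$, and then shows the decomposable elements with $a_7 \neq 0$ (resp.\ $a_8 \neq 0$) each span at most three dimensions, giving $\dim\mathcal{K} \leq 7$, a contradiction. Your version dispatches the case $\dim\pi(V) \geq 3$ in one line, which is tidy, and is cleaner than the paper's warm-up that $\dim\mathcal{K}>6$.

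However, the step you flag as ``the delicate part'' is a genuine gap, and it is not small: $\dim\pi(V) \leq 2$ subsumes $r \in \{6,7,8\}$, which is precisely the regime the paper has to work hard to exclude. Your inequality $r \geq \dim V \geq 6 + \dim\pi(V)$ alone cannot reach $9$ there; you need to show directly that no $\leq 8$-dimensional decomposably-spanned space containing $\Lcal_2$ exists, which is the entire content of the paper's argument, not a formality that bookkeeping will dispatch. The structural trichotomy you propose for the corners (common left factor, common right factor, or two crossed rays in the rank-one quadric) names the right objects, but you also need to account for the generators whose corner is zero (there could be many), and you need to combine that with the fact that $\Lcal_2$ contains a matrix whose top-right $2\times 2$ block has rank two (take $a_5 = a_6 = 1$, the rest zero). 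Until this analysis is carried out to a contradiction, $\drk_2\Tcal \geq 9$ is not established.
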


\begin{proof}
The slice space is $\Lcal_2 = \langle x^2,xy,y^2,xz-yw,yz,xw \rangle$ or, in coordinates,
\begin{equation}
    \label{eqn:4x4}
    \Lcal_2 = \left\langle 
    \begin{pmatrix}
a_1 & a_2 & a_4 & a_6 \\
a_2 & a_3 & a_5 & -a_4 \\
a_4 & a_5 & 0 & 0 \\
a_6 & -a_4 & 0 & 0
\end{pmatrix} \mid a_1, \ldots, a_6 \in \RR \right\rangle \subset \RR^{4 \times 4}. 
\end{equation} 
The nine symmetric decomposable matrices $x^2$, $(x+y)^2$, $y^2$, $(x+z)^2$, $(x+w)^2$, $(y+z)^2$, $(y+w)^2$, $z^2$, $w^2$ span $\Lcal_2$, hence  $\sdrk_2 \Tcal \leq 9$.
It remains to show that $\drk_2 \Tcal \geq 9$. 

The decomposable rank $\drk_2 \Tcal$ is the smallest $r$ such that $r$ rank one $4 \times 4$ matrices span $\Lcal_2$.
Let $\mathcal{K}$ denote the span of these rank one matrices. 
If $\dim \mathcal{K} = 6$, then every matrix in $\mathcal{K}$ is also in $\Lcal_2$. But
a decomposable matrix in $\Lcal_2$ has $a_4 = a_5 = a_6 = 0$,
and such matrices do not span $\Lcal_2$. Hence $\dim \mathcal{K} > 6$.

We extend this argument to show that $\dim \mathcal{K} > 8$. If $\dim \mathcal{K} \leq 8$, then $\mathcal{K}$ is spanned by $\Lcal_2$ together with two other rank one matrices. Then every element of $\mathcal{K}$ is
\begin{equation}
    \label{eqn:a1..a8} 
    \begin{pmatrix}
a_1 & a_2 & a_4 & a_6 \\
a_2 & a_3 & a_5 & -a_4 \\
a_4 & a_5 & 0 & 0 \\
a_6 & -a_4 & 0 & 0
\end{pmatrix} + 
a_7 
\begin{pmatrix} x_{11} \\ x_{12} 
\\ x_{13} \\ x_{14} \end{pmatrix} \otimes \begin{pmatrix} x_{21} \\ x_{22} 
\\ x_{23} \\ x_{24} \end{pmatrix}+
a_8
\begin{pmatrix} y_{11} \\ y_{12} 
\\ y_{13} \\ y_{14} \end{pmatrix} \otimes \begin{pmatrix} y_{21} \\ y_{22} 
\\ y_{23} \\ y_{24} \end{pmatrix}
\end{equation}
for fixed $x_{11}, \ldots, x_{24}$ and variable coefficients $a_1, \ldots, a_8$.
Consider the decomposable matrices of the form~\eqref{eqn:a1..a8}. 
The ideal of $2 \times 2$ minors contains
$$ a_7 a_8 (x_{14} y_{13} - x_{13} y_{14})(x_{24} y_{23} - x_{23} y_{24}) .$$
If $x_{24} y_{23} - x_{23} y_{24} = 0$, then the lower-right $2 \times 2$ block
of any matrix in $\mathcal{K}$ has
both rows proportional to $\begin{pmatrix} x_{23} & x_{24}  \end{pmatrix}$.
A decomposable matrix in $\mathcal{K}$ therefore has top right $2\times2$ block with rows proportional to $\begin{pmatrix} x_{23} & x_{24} \end{pmatrix}$. But $\mathcal{K}$ contains $\Lcal_2$, which contains matrices with rank two top right $2 \times 2$ block (e.g. $a_5 = a_6 = 1$, all other $a_i = 0$). 
This is a contradiction to $x_{24} y_{23} - x_{23} y_{24} = 0$. 
 By symmetry, this argument also excludes
 $x_{14} y_{13} - x_{13} y_{14} = 0$.
Hence $a_7 a_8 = 0$. This argument also shows that we need at least two extra matrices to span $\mathcal{K}$, hence $\dim \mathcal{K} > 7$.

We now consider decomposable matrices as in~\eqref{eqn:a1..a8} with $a_7 \neq 0$ and $a_8 = 0$. Then
\begin{equation}
\label{proportionalrel2}
a_4 x_{14}=a_6 x_{13}, \quad
a_5 x_{14}=-a_4 x_{13},\quad a_4 x_{24}=a_6 x_{23},\quad a_5 x_{24}=-a_4 x_{23}.
\end{equation} 
Hence $a_4 = a_5 = a_6 = 0$ or $a_4 a_5 a_6 \neq 0$. If $a_4 = a_5 = a_6 = 0$, then the matrix must be
\begin{equation}
\label{eqn:extra basis}
a_7 
\begin{pmatrix} x_{11} \\ x_{12} 
\\ x_{13} \\ x_{14} \end{pmatrix} \otimes \begin{pmatrix} x_{21} \\ x_{22} 
\\ x_{23} \\ x_{24} \end{pmatrix}.
\end{equation}
If $a_4 a_5 a_6 \neq 0$, 
the vectors $\begin{pmatrix}
x_{13}&x_{14}
\end{pmatrix}$ and $\begin{pmatrix}
x_{23}&x_{24}
\end{pmatrix}$ are linearly dependent, by~\eqref{proportionalrel2}. Rescaling one of the $x$ vectors, we may assume
$x_{13}=x_{23}$ and $x_{14}=x_{24}
$. Moreover,~\eqref{proportionalrel2} allows us to set $a_4 = x_{13}\alpha$, $a_6 = x_{14}\alpha$, and $a_5 = -\frac{x_{13}^2\alpha}{x_{14}}$, for some $\alpha \neq 0$. Without loss of generality, we set $a_7 = 1$.
Then the matrix is
\begin{equation}
\label{eqn:decomp a_7neq0}
\begin{pmatrix}
\alpha+x_{11}\\-\frac{x_{13}^2}{x_{14}}\alpha+x_{12}\\x_{13}\\x_{14}
\end{pmatrix}\otimes\begin{pmatrix}
\alpha+x_{21}\\-\frac{x_{13}^2\alpha}{x_{14}}+x_{22}\\x_{13}\\x_{14}
\end{pmatrix}=M_0 + \alpha M_1+ \alpha^2 M_2
\end{equation}
where $M_i$ are fixed matrices with entries given in terms of $x_{11},\ldots,x_{24}$.
The span of such matrices has dimension at most $3$. 
Moreover, the matrix $M_0$ is of the form \eqref{eqn:extra basis}. Hence, after combining with the case $a_4a_5a_6=0$, we still have a space of matrices of dimension at most $3$. Similarly, the span of the space of decomposable matrices with $a_7 = 0$ and $a_8 \neq 0$ has dimension at most~$3$.
Denote the linear spaces spanned by the decomposable matrices in~\eqref{eqn:a1..a8}  with $(a_7 \neq 0, a_8 = 0)$, $(a_7 = 0, a_8 \neq 0)$, $(a_7 = 0, a_8 = 0)$ by $\mathcal{X}$, $\mathcal{Y}$, and $\mathcal{Z}$ respectively. Then $\mathcal{K} = \mathcal{X} + \mathcal{Y} + \mathcal{Z}$. Our assumption is that $\dim(\mathcal{X}+\mathcal{Y}+\mathcal{Z}) \leq 8$ and we have already ruled out $\dim(\mathcal{X}+\mathcal{Y}+\mathcal{Z}) \leq 7$.

We show that $\dim (\mathcal{X} +  \mathcal{Y}) -\dim((\mathcal{X}+\mathcal{Y})\cap\mathcal{Z})) \leq 4$. If $\dim \mathcal{X} = 3$ then $M_2 =  
v_x^{\otimes 2} \in \mathcal{X}$, where $v_x = \begin{pmatrix} 1 & -\nicefrac{x_{13}^2}{x_{14}} & 0 & 0 \end{pmatrix}$.
This
$M_2$ has zeros outside of its top left $2 \times 2$ block, so it also lies in $\mathcal{Z}$. Hence $\dim (\mathcal{X} \cap \mathcal{Z}) \geq 1$. Similarly, if $\dim \mathcal{Y} = 3$ then $\dim (\mathcal{Y} \cap \mathcal{Z}) \geq 1$, since $v_y^{\otimes 2} \in \mathcal{Y} \cap \mathcal{Z}$, where $v_y = \begin{pmatrix} 1 & -\nicefrac{y_{13}^2}{y_{14}} & 0 & 0 \end{pmatrix}$.
Hence if $\dim \mathcal{X} = \dim \mathcal{Y} = 3$, then $\dim ( (\mathcal{X} + \mathcal{Y}) \cap \mathcal{Z}) \geq 2$, if  $v_x^{\otimes 2}$ and $v_y^{\otimes 2}$ are linearly independent.
If $v_x^{\otimes 2}$ and $v_y^{\otimes 2}$ are linearly dependent, then $\dim (\mathcal{X} + \mathcal{Y})\leq 5$ and $\dim ( (\mathcal{X} + \mathcal{Y}) \cap \mathcal{Z}) \geq 1$. Hence, in all cases, $\dim (\mathcal{X} + \mathcal{Y}) -\dim((\mathcal{X}+\mathcal{Y})\cap\mathcal{Z})) \leq 4$.

The previous paragraph, together with $\dim \mathcal{Z} \leq 3$, implies that $\dim \mathcal{K} \leq 7$, 
since $\dim(\mathcal{X}+\mathcal{Y}+\mathcal{Z}) \leq \dim (\mathcal{X} + \mathcal{Y}) -\dim((\mathcal{X}+\mathcal{Y})\cap\mathcal{Z})) + \dim \mathcal{Z}$. This is our required contradiction, hence $\drk_2 \Tcal \geq 9$.
\end{proof}

\subsection{Sylvester's rank inequality}

We use the decomposable flattening rank to lower bound the rank of a tensor, by combining it with Sylvester's rank inequality.

\begin{theorem}[Sylvester's rank inequality]
\label{thm:sylvester}
For matrices $A \in \RR^{m \times n}$ and $B \in \RR^{n \times k}$, 
$$\rk AB \geq \rk A + \rk B -n.$$
\end{theorem}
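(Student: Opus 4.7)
The plan is to prove Sylvester's rank inequality by the standard rank–nullity argument, viewing $A$ and $B$ as linear maps and restricting $A$ to the image of $B$.

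Regard $B : \RR^k \to \RR^n$ and $A : \RR^n \to \RR^m$. Then $\rk(AB) = \dim A(\mathrm{Im}\, B)$, since the image of the composition $AB$ is exactly the image of $A$ restricted to the image of $B$. The first main step is to apply the rank–nullity theorem to the restricted map $A|_{\mathrm{Im}\, B} : \mathrm{Im}\, B \to \RR^m$. This gives
\[
\dim A(\mathrm{Im}\, B) \;=\; \dim \mathrm{Im}\, B \;-\; \dim\bigl(\ker A \cap \mathrm{Im}\, B\bigr).
\]
The first term on the right is $\rk B$ by definition.

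The second main step is to bound the correction term by the total dimension of $\ker A$:
\[
\dim\bigl(\ker A \cap \mathrm{Im}\, B\bigr) \;\leq\; \dim \ker A \;=\; n - \rk A,
\]
using rank–nullity applied to $A$ itself on $\RR^n$. Substituting back yields
\[
\rk(AB) \;=\; \rk B - \dim\bigl(\ker A \cap \mathrm{Im}\, B\bigr) \;\geq\; \rk B - (n - \rk A) \;=\; \rk A + \rk B - n,
\]
which is the desired inequality.

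There is no real obstacle here: the result is classical and the argument above is routine. The only point to double-check is that everything is over $\RR$ and finite-dimensional, so rank–nullity applies without qualification and no basis choices are needed. An alternative packaging, which I would mention only if a block-matrix style is preferred elsewhere in the paper, is to compute the rank of $\bigl(\begin{smallmatrix} I_n & B \\ -A & 0 \end{smallmatrix}\bigr)$ in two ways: block row/column reductions give $n + \rk(AB)$, while a direct column-space analysis gives $n + \rk B - \dim(\ker A \cap \mathrm{Im}\, B)$, recovering the same chain of inequalities.
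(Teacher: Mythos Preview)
Your proof is correct; this is the standard rank--nullity argument for Sylvester's inequality. Note, however, that the paper does not actually give a proof of this theorem: it is quoted as a classical result and used as a tool (in the proofs of Theorems~\ref{thm:general_syl} and~\ref{thm:symmetric_sylv}), so there is nothing to compare against beyond observing that your argument is a valid justification of the stated inequality.
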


The inequality gives the first lower bound in Theorem~\ref{thm:A}, which we restate here.

\begin{theorem}\label{thm:general_syl}
Let $\Tcal$ be an order $d$ tensor, and fix $J \subset [d]$, with $J^c = [d] \backslash J$. Then
$$ \rk \Tcal \, \geq \, \drk_J \Tcal + \drk_{J^c} \Tcal - \rk \Tcal^{(J)} . $$
\end{theorem}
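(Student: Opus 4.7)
The plan is to apply Sylvester's rank inequality (Theorem~\ref{thm:sylvester}) to a matrix factorisation of the flattening $\Tcal^{(J)}$ induced by a rank decomposition of $\Tcal$. Set $r = \rk \Tcal$ and fix a decomposition
$$ \Tcal \; = \; \sum_{i=1}^r u_i^{(1)} \otimes \cdots \otimes u_i^{(d)}. $$
For each $i$, let $\Ucal_i = \bigotimes_{j \in J} u_i^{(j)} \in \bigotimes_{j \in J} \RR^{I_j}$ and $\Vcal_i = \bigotimes_{j \in J^c} u_i^{(j)} \in \bigotimes_{j \in J^c} \RR^{I_j}$. These are decomposable tensors by construction. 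After vectorising, I would assemble them into matrices $U = [\Vect \Ucal_1 \mid \cdots \mid \Vect \Ucal_r]$ and $V = [\Vect \Vcal_1 \mid \cdots \mid \Vect \Vcal_r]$ of sizes $(\prod_{j \in J} |I_j|) \times r$ and $(\prod_{j \in J^c} |I_j|) \times r$, respectively.

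The next step is to verify that the decomposition of $\Tcal$ translates to the matrix identity $\Tcal^{(J)} = U V^\T$. Sylvester's rank inequality applied to this factorisation gives
$$ \rk \Tcal^{(J)} \; \geq \; \rk U + \rk V - r. $$
So the theorem will follow once I show $\rk U \geq \drk_J \Tcal$ and, symmetrically, $\rk V \geq \drk_{J^c} \Tcal$.

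For the bound $\rk U \geq \drk_J \Tcal$, the key observation is that the column space of $U$ contains the column space of $UV^\T = \Tcal^{(J)}$, which, after unvectorising columns, is exactly the slice space $\Lcal_J$ of Definition~\ref{def:Lj}. If $\rk U = s$, then $s$ of the $r$ columns of $U$ form a basis for the column space of $U$; these are $s$ decomposable tensors whose linear span contains $\Lcal_J$. By the definition of $\drk_J \Tcal$ as the minimum such $s$, we conclude $\drk_J \Tcal \leq \rk U$. The identical argument with $J$ and $J^c$ swapped gives $\drk_{J^c} \Tcal \leq \rk V$. Substituting into the Sylvester inequality and rearranging yields the desired bound.

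The argument is short and the single subtle point is Step~4: one must be careful that the rank of the matrix $U$, rather than simply its number of columns, bounds $\drk_J$ from above. This works because passing to any maximal linearly independent subset of the columns of $U$ does not shrink the column space, so the resulting smaller collection of decomposable tensors still spans a space containing $\Lcal_J$. This is the only place where the definition of the decomposable flattening rank does real work; the remainder is a direct application of Sylvester's inequality to the natural factorisation $\Tcal^{(J)} = UV^\T$.
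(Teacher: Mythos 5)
Your proposal is correct and follows essentially the same route as the paper's proof: factor the flattening as $\Tcal^{(J)} = UV^\mathsf{T}$ from a rank decomposition, apply Sylvester's inequality, and bound $\rk U$ and $\rk V$ below by $\drk_J\Tcal$ and $\drk_{J^c}\Tcal$ via a maximal linearly independent subset of the (decomposable) columns. The only difference is notational — the paper writes the second factor as a matrix $S$ of rows rather than your $V^\mathsf{T}$.
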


\begin{proof}
Set $r:= \rk\Tcal$ and fix a decomposition $\Tcal=\sum_{i=1}^r x_i^{(1)}\otimes\cdots \otimes x_i^{(d)}$.  Without loss of generality $J=\{1,\ldots,j\}$. 
Then,
\begin{align}
~\label{eqn:TJdecomp}
\Tcal^{(J)}&=\sum_{i=1}^r \Vect(x_i^{(1)}\otimes\cdots\otimes x_i^{(j)}) \otimes 
\Vect(x_i^{(j+1)}\otimes\cdots\otimes x_i^{(d)})= U S ,
\end{align}
where $U$ is the $(I_1 \cdots I_j) \times r$ matrix with $i$th column $\Vect (x_i^{(1)}\otimes\cdots\otimes x_i^{(j)})$, and $S$ is the $r \times (I_{j+1} \cdots I_d)$ matrix with $i$th row 
$\Vect(x_i^{(j+1)}\otimes\cdots\otimes x_i^{(d)})$.
Applying Sylvester's rank inequality to~\eqref{eqn:TJdecomp} gives
$\rk\Tcal^{(J)} \geq \rk U + \rk S -r$.

Every column of $\Tcal^{(J)}$ is a linear combination of the columns of $U$, which are decomposable. Choose a subset of the columns of $U$ that are linearly independent. This gives an expression for each column of $\Tcal^{(J)}$ as a linear combination of $\rk U$ (vectorised) decomposable tensors. Hence $\rk U \geq \drk_J \Tcal$. 
Similarly, each row of $\Tcal^{(J)}$ (i.e. each column of $\Tcal^{(J^c)}$) is a linear combination of the rows of $S$. The rows of $S$ are (vectorised) decomposable tensors, hence $\rk S \geq \drk_{J^c} \Tcal$.
In conclusion, $\rk\Tcal^{(J)}\geq\drk_J \Tcal + \drk_{J^c} \Tcal -r$.
\end{proof}

\begin{remark}
Theorem~\ref{thm:general_syl} gives inequalities among the ranks of certain unfoldings of a tensor. 
Given  $\Tcal \in \RR^{I_1} \otimes \cdots \otimes \RR^{I_d}$, the unfoldings of $\Tcal$ are indexed by partitions of $[d]$, see~\cite{wang2017operator} and the discussion after Definition~\ref{def:flatten}. 
Let $J = \{ 1, \ldots, j\}$, for ease of notation. 
Then Theorem~\ref{thm:general_syl} compares the flattening indexed by partition
$\{ 1, \ldots, j \} \cup \{ j + 1, \ldots, d\}$ with the unfoldings of $\{ 1 \} \cup \ldots \cup \{ j \} \cup \{ j+1, \ldots, d \}$ and $\{ 1, \ldots, j \} \cup \{ j + 1 \} \cup \ldots \cup \{ d \}$.
\end{remark}

\begin{remark}
There are other applications of Sylvester's rank inequality in the study of tensor rank.
It is used to show that the rank of a generic tensor is equal to the rank of its $(I_1 \cdots I_j) \times (I_{j+1} \cdots I_d)$ flattening, provided $\rk \Tcal \leq \min (I_1 \cdots I_j , I_{j+1} \cdots I_d)$, in~\cite[Equation (17)]{calvi2019tight}.
It is used in the study of CUR decomposition~\cite{mahoney2009cur} of tensors in~\cite{cai2021mode}. It is used in multilinear rank decompositions in~\cite{domanov2020uniqueness} and in the context of orthogonal tensor decomposition in~\cite{anandkumar2013tensor}.
\end{remark}

We return to the polynomial~\eqref{eqn:quartic_ex}. Later, we will see that the following lower bound holds with equality.

\begin{corollary}
\label{cor:geq12}
The tensor $\Tcal = x^4-3y^4+ 12x^2yz+12xy^2w$ has $\rk \Tcal \geq 12$.
\end{corollary}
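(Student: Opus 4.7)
The plan is to read off $\rk \Tcal \geq 12$ as an immediate specialization of Theorem~\ref{thm:general_syl}, with the substantive work already done in Proposition~\ref{prop:drk9}. Since $\Tcal$ is a symmetric order-four tensor, I would apply the inequality with $J = \{1,2\}$ and $J^c = \{3,4\}$, both of size two, so that both decomposable flattening ranks on the right-hand side collapse to the same quantity $\drk_2 \Tcal$.

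First, I would identify the three ingredients on the right-hand side. Proposition~\ref{prop:drk9} gives $\drk_J \Tcal = \drk_{J^c} \Tcal = \drk_2 \Tcal = 9$ (using that $\Tcal$ is symmetric, so the $J$ slice space and the $J^c$ slice space coincide up to transpose and hence have the same decomposable flattening rank). Next, I would compute $\rk \Tcal^{(2)}$, which by Definition~\ref{def:Lj} equals $\dim \Lcal_2$: the slice space displayed in~\eqref{eqn:4x4} is parameterized by six free entries $a_1,\dots,a_6$, and the generators $x^2, xy, y^2, xz-yw, yz, xw$ listed in the proof of Proposition~\ref{prop:drk9} are manifestly linearly independent, giving $\dim \Lcal_2 = 6$.

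Plugging these values into Theorem~\ref{thm:general_syl} yields
\[
\rk \Tcal \;\geq\; \drk_J \Tcal + \drk_{J^c} \Tcal - \rk \Tcal^{(J)} \;=\; 9 + 9 - 6 \;=\; 12,
\]
which is the claim. There is no real obstacle: the only step with any content is verifying $\drk_2 \Tcal = 9$, and that has been handled in Proposition~\ref{prop:drk9} by a careful analysis of the decomposable matrices in $\Lcal_2$. The remaining work is just bookkeeping in the inequality of Theorem~\ref{thm:A}.
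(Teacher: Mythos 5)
Your proposal is correct and follows essentially the same route as the paper: apply Theorem~\ref{thm:general_syl} with $|J|=|J^c|=2$, use $\drk_2 \Tcal = 9$ from Proposition~\ref{prop:drk9}, and note $\rk \Tcal^{(2)} = \dim \Lcal_2 = 6$ to get $9+9-6=12$. No issues.
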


\begin{proof}
Theorem~\ref{thm:general_syl} gives 
$\rk \Tcal \geq 2 \drk_2 \Tcal - \rk \Tcal^{(2)}$. 
We have $\drk_2 \Tcal = 9$, 
by Proposition~\ref{prop:drk9}.
The slice space $\Lcal_2$ in~\eqref{eqn:4x4} is six-dimensional, i.e. the flattening $\Tcal^{(2)} \in \RR^{16 \times 16}$ has rank~$6$. Hence $r \geq 18 - 6 = 12$. 
\end{proof}

\subsection{Symmetric decomposable rank}

Recall the symmetric decomposable flattening rank from Definition~\ref{def:sdrk}.

\begin{proposition}\label{prop:sdrkineq}
For $1 \leq j \leq d$, we have $\rk \Tcal^{(j)} \leq \drk_j \Tcal \leq \sdrk_{j} \Tcal \leq \srk \Tcal$.
\end{proposition}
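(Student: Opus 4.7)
The plan is to verify the three inequalities separately, from left to right, each by unpacking the relevant definition. The first inequality is essentially Proposition~\ref{prop:drkineq}(i) translated into the symmetric abbreviation, and the second is an immediate set-theoretic comparison. The only step requiring a genuine computation is the third, and even that reduces to reading off slices of a symmetric decomposition.

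For $\rk \Tcal^{(j)} \leq \drk_j \Tcal$, I would apply Proposition~\ref{prop:drkineq}(i) to any $J \subset [d]$ with $|J| = j$; by symmetry of $\Tcal$ the resulting quantities $\rk \Tcal^{(J)}$ and $\drk_J \Tcal$ do not depend on the choice of $J$ and equal $\rk \Tcal^{(j)}$ and $\drk_j \Tcal$ respectively. For $\drk_j \Tcal \leq \sdrk_j \Tcal$, I would note that a symmetric decomposable tensor is in particular decomposable. Thus any collection of $r$ symmetric decomposable tensors whose span contains $\Lcal_j$ is in particular a collection of $r$ decomposable tensors with the same property, so the infimum in Definition~\ref{def:drk} is at most the infimum in Definition~\ref{def:sdrk}.

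For the remaining inequality $\sdrk_j \Tcal \leq \srk \Tcal$, I would start from a symmetric rank decomposition $\Tcal = \sum_{i=1}^{r} v_i^{\otimes d}$ with $r = \srk \Tcal$ and $v_i \in \RR^I$. Fix any $J \subset [d]$ with $|J| = j$; then for each index tuple $\ii \in I^{J^c}$, the corresponding slice is
\[
\Tcal_\ii^{J^c} = \sum_{i=1}^{r} \Bigl( \prod_{h \in J^c} v_i(\ii_h) \Bigr) \, v_i^{\otimes j},
\]
so every slice lies in the span of the symmetric decomposable tensors $v_1^{\otimes j}, \ldots, v_r^{\otimes j}$. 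Hence $\Lcal_j \subseteq \langle v_1^{\otimes j}, \ldots, v_r^{\otimes j} \rangle$, which exhibits an $r$-element family of symmetric decomposable tensors spanning $\Lcal_j$, and therefore $\sdrk_j \Tcal \leq r = \srk \Tcal$.

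The main potential obstacle is the third step, but it is mild: one must make sure to use the symmetry of the summands $v_i^{\otimes d}$ to collapse the contribution of the $J^c$ coordinates into a scalar and leave a symmetric decomposable factor $v_i^{\otimes j}$ on the $J$ coordinates. Once that observation is made the chain of inequalities is immediate.
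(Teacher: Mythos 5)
Your proposal is correct and follows essentially the same route as the paper: the first inequality is cited from Proposition~\ref{prop:drkineq}, the second is the observation that symmetric decomposable tensors are decomposable, and the third takes the $j$th tensor powers $v_i^{\otimes j}$ of the vectors in a symmetric decomposition as a symmetric decomposable spanning set for $\Lcal_j$. The only cosmetic point is that over $\RR$ (for even $d$ and $j$) a symmetric decomposition generally carries scalar coefficients $\la_i$, i.e.\ $\Tcal=\sum_i \la_i v_i^{\otimes d}$, but this changes nothing since the slices still lie in the span of $v_1^{\otimes j},\ldots,v_r^{\otimes j}$.
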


\begin{proof}
The inequality $\drk_j \Tcal \leq \sdrk_j \Tcal$ follows from the definitions and 
$\rk \Tcal^{(j)} \leq \drk_j \Tcal$ is in Proposition~\ref{prop:drkineq}. Let $r:= \srk\Tcal$ with $\left\{x_i^{\otimes d} \mid i\in\{1,\ldots,r\} \right\}$ tensors in a symmetric decomposition of $\Tcal$. Then, $\left\{x_i^{\otimes j} \mid i\in\{1,\ldots,r\} \right\}$ spans the slice space $\Lcal_j$, and is a set of symmetric decomposable tensors, so $\sdrk_{j} \Tcal \leq \srk \Tcal.$ 
\end{proof}

The $(j,1)$ partially symmetric rank of an order $j+1$ tensor is the smallest $r$ such that the tensor can be written as a linear combination of decomposable tensors of the form $x^{\otimes j} \otimes y$, see~\cite{gesmundo2019partially}.
The following is proved for $j = d-1$, and for ranks defined over the complex numbers, in~\cite[Corollary 2.5]{gesmundo2019partially}. 

\begin{proposition}
The symmetric decomposable flattening rank $\sdrk_j \Tcal$ is the $(j,1)$ partially symmetric rank of the order $j + 1$ tensor whose $j+1$ slices are the order $j$ slices of $\Tcal$.
\end{proposition}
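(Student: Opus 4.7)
The plan is to mirror the proof of Proposition~\ref{prop:orderJ+1} in the symmetric/partially symmetric setting. Let $\Scal$ be the order $j+1$ tensor whose $j+1$ slices are the order $j$ slices of $\Tcal$, so that $\Scal = \sum_{\ii} \Tcal_\ii^{J^c} \otimes e_\ii$ for $J = \{1,\ldots,j\}$. Because $\Tcal$ is symmetric, each slice $\Tcal_\ii^{J^c}$ is a symmetric order $j$ tensor, so it makes sense to speak of spanning $\Lcal_j$ by symmetric decomposable tensors $u_k^{\otimes j}$.

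For the inequality $(j,1)$-$\text{rank}(\Scal) \leq \sdrk_j \Tcal$, I would set $r := \sdrk_j \Tcal$ and pick $u_1, \ldots, u_r$ such that $\{u_k^{\otimes j}\}$ spans $\Lcal_j$. Writing each slice as $\Tcal_\ii^{J^c} = \sum_{k=1}^r c_\ii^{(k)} u_k^{\otimes j}$ and substituting into the expression for $\Scal$ gives, exactly as in \eqref{eqn:Stensor},
\begin{equation}
\Scal = \sum_{k=1}^r u_k^{\otimes j} \otimes \Bigl( \sum_{\ii} c_\ii^{(k)} e_\ii \Bigr),
\end{equation}
which is a sum of $r$ partially symmetric decomposable tensors of the required form.

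For the reverse inequality, I would start from any expression $\Scal = \sum_{i=1}^{r'} x_i^{\otimes j} \otimes y_i$ witnessing the $(j,1)$ partially symmetric rank. Each order $j$ slice of $\Scal$ in its last mode is a linear combination of the $x_i^{\otimes j}$, and these slices are exactly the order $j$ slices of $\Tcal$, whose span is $\Lcal_j$. Hence $\{x_1^{\otimes j},\ldots,x_{r'}^{\otimes j}\}$ is a set of symmetric decomposable tensors whose span contains $\Lcal_j$, giving $\sdrk_j \Tcal \leq r'$.

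There is no real obstacle; the only subtlety is to observe that symmetry of $\Tcal$ forces every order $j$ slice of $\Tcal$ (equivalently, every last-mode slice of $\Scal$) to be symmetric in its $j$ indices, which is what allows one to replace the role of decomposable tensors in Proposition~\ref{prop:orderJ+1} by symmetric decomposable tensors and the role of tensor rank by $(j,1)$ partially symmetric rank. This proposition recovers \cite[Corollary~2.5]{gesmundo2019partially} in the case $j = d-1$ over $\mathbb{C}$.
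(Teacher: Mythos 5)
Your proof is correct and follows essentially the same route as the paper's: both directions mirror Proposition~\ref{prop:orderJ+1}, using a set of symmetric decomposable tensors spanning $\Lcal_j$ to build a partially symmetric expression for $\Scal$ as in \eqref{eqn:Stensor}, and conversely reading off a spanning set $\{x_i^{\otimes j}\}$ for $\Lcal_j$ from any $(j,1)$ partially symmetric decomposition. No gaps to report.
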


\begin{proof}
Let $\{\Ucal_1,\ldots,\Ucal_{r}\}$ be symmetric decomposable tensors whose linear span contains~$\Lcal_j$, where $r=\sdrk_j \Tcal$. 
Let $\Scal \in (\RR^{I})^{\otimes j} \otimes \RR^{I^{d-j}}$ be the order $j+1$ tensor from the statement.
Each $j+1$ slice of $\Scal$ is a linear combination of the $\Ucal_i$. This gives an expression for $\mathcal{S}$ as the sum of $r$ terms, as in~\eqref{eqn:Stensor}, with the required symmetry.
Conversely, if $\Scal$ has partially symmetric rank $r'$, then $\Scal$ is a linear combination of decomposable tensors $\{x_i^{\otimes j}\otimes y_i \mid i\in\{1,\ldots,r' \}\}$. Each $j+1$ slice is spanned by $\{x_i^{\otimes j} \mid i\in\{1,\ldots,r' \}\}$, which means $r'\geq \sdrk_j \Tcal$. Hence, $\sdrk_j \Tcal = \rk \Scal$.
\end{proof}

As in the non-symmetric case, we combine the symmetric decomposable flattening rank with Sylvester's rank inequality to lower bound the symmetric rank. 
This gives the second inequality from Theorem~\ref{thm:A}, which we restate here.

\begin{theorem}
\label{thm:symmetric_sylv}
Let $\Tcal$ be an order $d$ symmetric tensor, and fix $1 \leq j \leq d$. Then
$$ \srk \Tcal \geq \sdrk_j \Tcal + \sdrk_{d-j} \Tcal - \rk \Tcal^{(j)} . $$
\end{theorem}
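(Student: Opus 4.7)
The plan is to mirror the proof of Theorem~\ref{thm:general_syl} almost verbatim, replacing the general rank decomposition with a symmetric rank decomposition and observing that the resulting factors of the flattening now consist of symmetric decomposable tensors. This lets us replace $\drk$ by $\sdrk$ in both bounds that come out of Sylvester's inequality.

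Concretely, I would set $r := \srk \Tcal$ and fix a symmetric decomposition $\Tcal = \sum_{i=1}^r x_i^{\otimes d}$ with $x_i \in \RR^I$. Mimicking~\eqref{eqn:TJdecomp}, I factor
\begin{equation}
\Tcal^{(j)} = \sum_{i=1}^r \Vect(x_i^{\otimes j}) \otimes \Vect(x_i^{\otimes(d-j)}) = U S,
\end{equation}
where $U$ is the matrix whose $i$th column is $\Vect(x_i^{\otimes j})$ and $S$ is the matrix whose $i$th row is $\Vect(x_i^{\otimes(d-j)})$. Sylvester's rank inequality (Theorem~\ref{thm:sylvester}) then gives
$\rk \Tcal^{(j)} \geq \rk U + \rk S - r$.

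Next I would bound $\rk U$ and $\rk S$ from below by the symmetric decomposable flattening ranks. Every column of $\Tcal^{(j)}$ lies in the column span of $U$, so the slice space $\Lcal_j$ is contained in the span of the columns of $U$; picking a basis of the column space of $U$ yields $\rk U$ vectorisations of symmetric decomposable tensors $x_i^{\otimes j} \in (\RR^I)^{\otimes j}$ whose span contains $\Lcal_j$, hence $\rk U \geq \sdrk_j \Tcal$ by Definition~\ref{def:sdrk}. The identical argument applied to the rows of $S$ (which are vectorisations of $x_i^{\otimes(d-j)}$, and whose span contains $\Lcal_{d-j}$ since $\Tcal$ is symmetric and so $\Tcal^{(j)\,\mathsf{T}}$ has the same role) yields $\rk S \geq \sdrk_{d-j} \Tcal$. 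Substituting into the Sylvester bound gives the claim.

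The only substantive point to check carefully is the symmetry: because $\Tcal$ is symmetric, $\Lcal_{d-j}$ equals the span of the rows of $\Tcal^{(j)}$ (up to the natural identification), so the bound $\rk S \geq \sdrk_{d-j} \Tcal$ really does apply on the $(d-j)$-side with the correct slice space. There is no real obstacle beyond this bookkeeping; the whole argument is a symmetric repeat of the proof of Theorem~\ref{thm:general_syl}, with the key input being that in a symmetric decomposition the same vector $x_i$ appears in both factors of $U$ and $S$, so the decomposable tensors read off from each factor are automatically symmetric.
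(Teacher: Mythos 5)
Your proposal follows the paper's proof of Theorem~\ref{thm:symmetric_sylv} essentially verbatim: factor the flattening through the symmetric decomposition, apply Sylvester, and bound the two factor ranks by $\sdrk_j$ and $\sdrk_{d-j}$. The one point to fix is the form of the decomposition: over $\RR$ with $d$ even you cannot in general write $\Tcal = \sum_{i=1}^r x_i^{\otimes d}$ (e.g.\ $x^4 - 3y^4$ is not a real sum of fourth powers, since such a sum is nonnegative); a real symmetric decomposition is $\Tcal = \sum_{i=1}^r \la_i x_i^{\otimes d}$ with nonzero scalars $\la_i$ that cannot always be absorbed into the $x_i$. The paper therefore factors $\Tcal^{(j)} = U\,\Lambda\,S$ with $\Lambda$ the invertible diagonal matrix of the $\la_i$, and applies Sylvester to get $\rk \Tcal^{(j)} \geq \rk U + \rk(\Lambda S) - r = \rk U + \rk S - r$; with this one-line adjustment your argument, including the bookkeeping that the row span of $\Tcal^{(j)}$ is $\Lcal_{d-j}$ by symmetry, goes through exactly as in the paper.
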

\begin{proof}
Write $r:= \srk\Tcal$ and $\Tcal=\sum_{i=1}^r \la_i x_i^{\otimes d}$, where the $\la_i$ are non-zero scalars. Then
\begin{align*}
    \begin{split}
\Tcal^{(j)} & =
\begin{pmatrix}
\uparrow & &\uparrow\\
\Vect(x_1^{\otimes j}) &\cdots&\Vect(x_r^{\otimes j})
\\\downarrow & &\downarrow 
\end{pmatrix}
\begin{pmatrix}
\la_1&&\\
&\ddots&\\
&&\la_r
\end{pmatrix}
\begin{pmatrix}
\leftarrow & \Vect(x_1^{\otimes (d-j)})&\rightarrow
\\
 &\vdots& 
\\
\leftarrow & \Vect(x_r^{\otimes (d-j)})&\rightarrow
\end{pmatrix} \\
& = U \, \Lambda \, S.
\end{split}
\end{align*}
By Sylvester's rank inequality, $\rk\Tcal^{(j)}\geq\rk U +\rk (\Lambda S) -r=\rk U +\rk S -r.$ As in the proof of Theorem \ref{thm:general_syl}, we have $\rk U \geq \sdrk_j \Tcal$ and $\rk S \geq \sdrk_{d-j} \Tcal$.
\end{proof}

\subsection{Minimal rank and minimal symmetric rank}

Given a set of tensors $\mathcal{A}$, recall that $\minrk \mathcal{A}$ is the minimal rank of a tensor in $\mathcal{A}$.
Its symmetric analogue $\minsrk \mathcal{A}$ is the minimal symmetric rank of a symmetric tensor in $\mathcal{A}$.
In this section, we compare $\minrk \Acal$ and $\minsrk \Acal$.

\begin{proposition}
\label{prop:minrkA}
If $\minsrk \Acal \leq 1$ then
$\minsrk \Acal = \minrk \Acal$. 
\end{proposition}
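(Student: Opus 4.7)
The plan is to split into the cases $\minsrk \Acal = 0$ and $\minsrk \Acal = 1$, using the fact that for any symmetric tensor the rank is at most the symmetric rank, and that a nonzero tensor has rank at least one.

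First I would record the general inequality $\minrk \Acal \leq \minsrk \Acal$. Indeed, the minimum defining $\minrk \Acal$ is taken over a superset of the minimum defining $\minsrk \Acal$ (all of $\Acal$ versus the symmetric elements of $\Acal$), and on each symmetric tensor we have $\rk \Tcal \leq \srk \Tcal$ for free. This reduces the problem to showing $\minrk \Acal \geq \minsrk \Acal$ when $\minsrk \Acal \leq 1$.

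If $\minsrk \Acal = 0$, then the zero tensor lies in $\Acal$, so $\minrk \Acal \leq \rk 0 = 0$, and combined with $\minrk \Acal \geq 0$ we get equality. If $\minsrk \Acal = 1$, then $\Acal$ contains a symmetric tensor $v^{\otimes d}$ with $v \neq 0$, so $\minrk \Acal \leq 1$; and $\minrk \Acal = 0$ would force the zero tensor to lie in $\Acal$, giving $\minsrk \Acal = 0$, a contradiction. Hence $\minrk \Acal = 1$ in this case.

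There is no real obstacle here: the statement is essentially the observation that symmetric ranks $0$ and $1$ are detected by the same data as ordinary ranks $0$ and $1$. The proof is a one-line verification in each of the two cases, packaged by the universal bound $\minrk \Acal \leq \minsrk \Acal$.
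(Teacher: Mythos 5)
Your proof is correct and follows essentially the same route as the paper: the inequality $\minrk \Acal \leq \minsrk \Acal$ together with the observation that the only rank-zero tensor is the zero tensor, which is symmetric. The only cosmetic point is that over $\RR$ a symmetric tensor of symmetric rank one is $\lambda v^{\otimes d}$ (possibly with $\lambda<0$ when $d$ is even) rather than literally $v^{\otimes d}$, but this does not affect the bound $\minrk \Acal \leq 1$.
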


\begin{proof}
If $\minrk \Acal = 0$, the zero tensor lies in $\Acal$. Since the zero tensor is symmetric, this implies $\minsrk \Acal = 0$. Hence $\minsrk \Acal > 0$ implies $\minrk \Acal > 0$. 
The inequality $\minrk \Acal \leq \minsrk \Acal$ then shows that $\minsrk \Acal = 1$ implies $\minrk \Acal = 1$. 
\end{proof}

We describe a linear space of tensors $\Ccal \Mod \Mcal$ with $\minrk (\Ccal \Mod \Mcal)$ strictly less than $\minsrk (\Ccal \Mod \Mcal)$. 
This example is extracted from~\cite[Section 5]{shitov2020comon}.

\begin{proposition}
\label{prop:shitov_idea} 
Let $\Ccal := x^4 - 3 y^4$ and $\Mcal := \{ x^2y,xy^2 \}$. Then $\minrk ( \Ccal \Mod \Mcal) < \minsrk (\Ccal \Mod \Mcal)$. \end{proposition}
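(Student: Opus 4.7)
The plan is to compute both minima and establish $\minrk(\Ccal \Mod \Mcal) = 1 < 2 = \minsrk(\Ccal \Mod \Mcal)$. The key initial observation is that both $x^2 y$ and $xy^2$ vanish at the entries $(1|1|1)$ and $(2|2|2)$, so every element of $\Span \Mcal$ contributes zero at the all-ones and all-twos positions. Consequently every $\Tcal \in \Ccal \Mod \Mcal$ satisfies $\Tcal(1|1|1|1) = 1$ and $\Tcal(2|2|2|2) = -3$; in particular, the $x^4$ and $y^4$ coefficients of the polynomial of any symmetric $\Tcal$ in the space are fixed at $1$ and $-3$.

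For the symmetric minimum, I would first show that the symmetric tensors in $\Ccal \Mod \Mcal$ are exactly the binary quartics
\[
x^4 - 3 y^4 + A\, x^3 y + B\, x^2 y^2 + C\, xy^3, \qquad A, B, C \in \RR,
\]
the inclusion $\supseteq$ being obtained by choosing each $M_j^{(k_j)}$ to depend only on $k_j$ (making the resulting modification automatically symmetric), and the reverse inclusion from a direct computation showing the polynomial of any element of the modification subspace lies in $\Span\{x^3 y, x^2 y^2, xy^3\}$. Any real symmetric rank-one binary quartic equals $\lambda (a x + b y)^4$ and has $x^4$ and $y^4$ coefficients $\lambda a^4$ and $\lambda b^4$ of common sign, incompatible with $1$ and $-3$. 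Combined with $\srk(\Ccal) = 2$ from $\Ccal = x^{\otimes 4} + (-3)\, y^{\otimes 4}$, this yields $\minsrk(\Ccal \Mod \Mcal) = 2$.

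For the overall minimum, I would exhibit the rank-one (non-symmetric) tensor
\[
\Tcal \, := \, (x+y) \otimes (x+y) \otimes (x+y) \otimes (x - 3y)
\]
and verify $\Tcal \in \Ccal \Mod \Mcal$ using the modifications $M_j^{(1)} = -3\, x^2 y$ and $M_j^{(2)} = xy^2$ for $j \in \{1,2,3\}$, together with $M_4^{(1)} = 9\, x^2 y + 3\, xy^2$ and $M_4^{(2)} = -9\, x^2 y - 11\, xy^2$. Confirming the identity $\Tcal - \Ccal = \sum_{j=1}^{4} M_j^{(k_j)}(\widehat{k_j})$ at each of the sixteen positions is a routine entry-by-entry check; equivalently, $\Tcal - \Ccal$ satisfies the linear conditions that cut out the modification subspace inside $(\RR^2)^{\otimes 4}$ (vanishing at the all-ones and all-twos positions, together with the two ``opposite-pair" sum equalities among the six positions with two $1$'s and two $2$'s, all of which equal $-2$ here). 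Since $\Tcal \neq 0$, this gives $\minrk(\Ccal \Mod \Mcal) = 1$, and the strict inequality follows. The conceptual driver is the sign-parity obstruction: the preserved $x^4$ and $y^4$ coefficients forbid a symmetric fourth-power decomposition over $\RR$, but impose no analogous constraint on non-symmetric rank-one decompositions. The main computational step is the entry-wise verification that the exhibited $\Tcal$ lies in $\Ccal \Mod \Mcal$.
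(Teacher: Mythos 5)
Your proposal is correct and takes essentially the same route as the paper: the symmetric lower bound is the paper's preserved-coefficient argument (its $\alpha^4=-3$ obstruction is your sign-parity observation on the $x^4$ and $y^4$ coefficients), and your rank-one witness $(x+y)\otimes(x+y)\otimes(x+y)\otimes(x-3y)$ is exactly the decomposable tensor the paper obtains by solving for slice modifications $(a_1,\ldots,a_4)=(-1,-1,-1,2)$, $(b_1,\ldots,b_4)=(\tfrac13,\tfrac13,\tfrac13,-\tfrac23)$. Your explicit $M_j^{(k)}$ do verify entry-by-entry, provided one uses the convention (as in the paper's introduction) that the tensor corresponding to the polynomial $x^2y$ has entries $\tfrac13$ at the permuted positions; with the integer-entry normalization used in the paper's displayed slices the coefficients just rescale by $\tfrac13$.
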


\begin{proof}
We show that the linear space of tensors $\Ccal \Mod \Mcal$ contains a decomposable tensor but no symmetric decomposable tensor.
A symmetric tensor in $\Ccal \Mod \Mcal$
\begin{equation}
    \label{eqn:symUmodM}
     x^4 - 3y^4 + x^2 y (a x + b y) + xy^2 (c x + d y) , \qquad \text{for some} \quad a,b,c,d \in \RR.
\end{equation}
A symmetric decomposable $2 \times 2 \times 2 \times 2$ tensor with coefficient of $x^4$ equal to 1 can be written as
\begin{equation}
    \label{eqn:rankone}
    (x + \alpha y)^4 = x^4 + 4 \alpha x^3 y + 6 \alpha^2 x^2 y^2 + 4 \alpha^3 x y^3 + \alpha^4 y^4.
\end{equation} 
Equating the coefficient of $y^4$  in~\eqref{eqn:symUmodM} and~\eqref{eqn:rankone} gives 
$\alpha^4 = -3$, which has no real solutions. Hence $\minsrk (\Ccal \Mod \Mcal) \geq 2$.

We show that $\minrk (\Ccal \Mod \Mcal) \leq 1$. Adding $xy^2 + x^2y$ to the first $4$ slice and $-3(xy^2 + x^2 y)$ to the second $4$ slice of $\Ccal$ gives  the $2 \times 2 \times 2 \times 2$ tensor with $4$ slices
\begin{equation}
    \label{eqn:proof1}
    \left[
\begin{array}{cc||cc} 
1 & 1 & 1 & 1 \\ 
1 & 1 & 1 & 0 
\end{array}
\right] \qquad \text{and} \qquad 
\left[
\begin{array}{cc||cc} 
0 & -3 & -3 & -3 \\ 
-3 & -3 & -3 & -3 
\end{array}
\right] 
.
\end{equation}
Starting with a tensor of zeros, adding $x^2 y$ in multiples $a_1$, $a_2$, $a_3$, and $a_4$ to the first $1$ slice, $2$ slice, $3$ slice, and $4$ slice respectively gives the tensor with $4$ slices
\begin{equation}
    \label{eqn:proof2}
    \left[
\begin{array}{cc||cc} 
0 & \overline{a_2} & \overline{a_3} & 0 \\ 
\overline{a_1} & 0 & 0 & 0
\end{array}
\right] \qquad \text{and} 
\qquad \left[
\begin{array}{cc||cc} 
\overline{a_4} & 0 & 0 & 0 \\ 
0 & 0 & 0 & 0
\end{array}
\right] 
,
\end{equation}
where $\overline{a_i} := ( \sum_{j=1}^4 a_j) - a_i $.
Similarly, adding $x y^2$ in multiples $b_1$, $b_2$, $b_3$, and $b_4$ to the second $1$ slice, $2$ slice, $3$ slice, and $4$ slice respectively gives the tensor with $4$ slices
\begin{equation}\label{eqn:proof3}
\left[
\begin{array}{cc||cc} 
0 & 0 & 0 & 0 \\ 
0 & 0 & 0 & \overline{b_4}
\end{array}
\right] \qquad \text{and} 
\qquad \left[
\begin{array}{cc||cc} 
0 & 0 & 0 & \overline{b_1} \\ 
0 & \overline{b_3} & \overline{b_2} & 0
\end{array}
\right] 
,
\end{equation}
where $\overline{b_i} := ( \sum_{j=1}^4 b_j) - b_i $.
The sum of~\eqref{eqn:proof1},~\eqref{eqn:proof2}, and~\eqref{eqn:proof3} is decomposable when $(a_1, a_2 ,a_3, a_4) = (-1,-1,-1,2)$ and $(b_1, b_2, b_3, b_4) = (\frac13, \frac13, \frac13, -\frac23)$.
\end{proof}

\begin{remark}
\label{rem:CmodM}
Proposition~\ref{prop:shitov_idea} generalises to $\Ccal = x^d - 3 y^d$, $\Mcal=\{ x^{d-2}y, \ldots, xy^{d-2} \}$ for any even $d 
\geq 4$, as follows.
The comparison of~\eqref{eqn:symUmodM} and~\eqref{eqn:rankone} generalises to give $\minsrk (\Ccal \Mod \Mcal) = 2$. Moreover, $\minrk (\Ccal \Mod \Mcal) = 1$, see~\cite[Lemma 5.11]{shitov2020comon}.
These results also hold for $\Ccal = x^d - k y^d$, for any $k \in \RR_{>0}$.
\end{remark}

\subsection{The symmetric substitution conjecture}

We use the minimal rank and minimal symmetric rank to study tensors $\SAdj(\Ccal,\Mcal)$, see Definition~\ref{def:adj}.
We have
$$ \rk \SAdj(\Ccal,\Mcal) \geq \minrk (\Ccal \Mod \Mcal) + d \dim \Span \Mcal,$$ by Corollary~\ref{cor:adoined_substitution}. We conjecture its symmetric analogue, the real analogue to~\cite[Conjecture 7]{shitov2018counterexample}.

\begin{conjecture}[{The real symmetric substitution conjecture}]
\label{conj:modspan}
Fix a symmetric tensor $\Ccal \in (\RR^{I})^{\otimes d}$ and a finite set of symmetric tensors $\Mcal\subset (\RR^{I})^{\otimes (d-1)}$. Then
\begin{equation}
    \label{eqn:conj_ineq}
    \srk \SAdj(\Ccal,\Mcal) \geq \minsrk (\Ccal \Mod \Mcal) + d \dim  \Span \Mcal  .
\end{equation} 
Equality holds if $\Mcal$ consists of decomposable tensors.
\end{conjecture}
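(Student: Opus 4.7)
The plan is to translate to polynomials and analyze Waring decompositions by their $y$-degree structure. Writing $f$, $g_w$, and $F$ for the polynomials corresponding to $\Ccal$, $\Mcal^{(w)}$, and $\SAdj(\Ccal,\Mcal)$ in the variables $\{x_i\}_{i\in I}\cup\{y_w\}_{w\in W}$, summing over which of the $d$ positions carries a $W$-index (and using symmetry of $\Mcal^{(w)}$) gives
$$F(x,y) \;=\; f(x)+d\sum_{w\in W}g_w(x)\,y_w,$$
so $\srk\SAdj(\Ccal,\Mcal)$ is the real Waring rank of $F$. Symmetric elements of $\Ccal\Mod\Mcal$ correspond exactly to polynomials $f(x)+\sum_w g_w(x)h_w(x)$ for linear forms $h_w$ in $x$, so $\minsrk(\Ccal\Mod\Mcal)$ is the minimum Waring rank over all such polynomials.

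For the equality clause, assume $\Mcal^{(w)}=m_w^{\otimes(d-1)}$, so $g_w=m_w(x)^{d-1}$. Choosing distinct nonzero scalars $\alpha_1^w,\ldots,\alpha_d^w$ and solving a Vandermonde system in $c_1^w,\ldots,c_d^w$ produces the identity
$$d\,m_w(x)^{d-1}y_w \;=\; \sum_{k=1}^d c_k^w\bigl(m_w(x)+\alpha_k^w y_w\bigr)^{d}\;-\;c_w\,m_w(x)^d$$
for some scalar $c_w$. Summed over a basis of $\Span\Mcal$, this writes the $y$-linear part of $F$ using $d\dim\Span\Mcal$ symmetric decomposable $d$-th powers, plus a correction $\sum_w c_w m_w(x)^d=\sum_w m_w(x)^{d-1}\!\cdot\!(c_w m_w(x))$ that lies in the symmetric part of $\Ccal\Mod\Mcal$. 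The shifted polynomial $f-\sum_w c_w m_w^d$ is then in $\Ccal\Mod\Mcal$ and admits a symmetric decomposition of size $\minsrk(\Ccal\Mod\Mcal)$, delivering equality.

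For the inequality, I would take a real Waring decomposition $F=\sum_{i=1}^r\lambda_i\bigl(\alpha_i(x)+\sum_w\beta_{i,w}y_w\bigr)^d$, expand by $y$-degree, and match coefficients, obtaining $\sum_i\lambda_i\alpha_i^d=f$, $\sum_i\lambda_i\beta_{i,w}\alpha_i^{d-1}=g_w$ for each $w$, and $\sum_i\lambda_i\alpha_i^{d-|k|}\prod_w\beta_{i,w}^{k_w}=0$ for each multi-index $k$ on $W$ with $|k|\geq 2$. Partitioning $\{1,\ldots,r\}$ into $A=\{i:\beta_{i,\cdot}\neq 0\}$ and $B=\{i:\beta_{i,\cdot}=0\}$, the goal is to argue (a) $\sum_{i\in A}\lambda_i\alpha_i^d$ lies in the linear span of $\{g_w h:w\in W,\,h\text{ a linear form in }x\}$, which would give $|B|\geq\minsrk(\Ccal\Mod\Mcal)$, and (b) $|A|\geq d\dim\Span\Mcal$; combining yields $r\geq\minsrk(\Ccal\Mod\Mcal)+d\dim\Span\Mcal$.

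The crux, and the reason~\eqref{eqn:conj_ineq} remains conjectural, is (b): for each linearly independent direction in $\Span\Mcal$, the cancellations forced by the $|k|=2$ identities $\sum_i\lambda_i\beta_{i,w}\beta_{i,w'}\alpha_i^{d-2}=0$ should require at least $d$ indices in $A$ to participate. Over $\mathbb{C}$ this is the content of~\cite[Conjecture~6]{shitov2018counterexample}, proved only in special cases via apolarity and catalecticant rank; over $\RR$ algebraic closure is no longer available, but one gains the sign rigidity of $\lambda_i\in\RR$, so I would attempt a real apolarity argument on the $(d-2)$-th catalecticant combined with sign tracking of the $\lambda_i$ to force the count. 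Part (a) is also nontrivial, since the identity $\sum_i\lambda_i\alpha_i^d=f$ alone does not constrain the $A$-contribution; I would hope to derive it from (b) by an explicit change of variables on the $A$-terms that moves their $y^0$-contribution into the span of $\{g_w h\}$. A more tractable intermediate goal, sufficient for the order-six counterexample in Section~\ref{sec:order6}, is to verify Conjecture~\ref{conj:modspan} directly for the specific $(\Ccal,\Mcal)$ used there, in the spirit of Proposition~\ref{prop:drk9} and Corollary~\ref{cor:geq12} for~\eqref{eqn:quartic_ex}.
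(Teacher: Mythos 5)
You are analyzing a statement labeled a \emph{conjecture}: the paper does not prove the inequality~\eqref{eqn:conj_ineq} in general, and you correctly recognize this (``the reason~\eqref{eqn:conj_ineq} remains conjectural is (b)''). The paper's own contribution around the conjecture consists of Proposition~\ref{prop:conj_lower} (the upper bound, establishing ``$\leq$'' when $\Mcal$ consists of decomposable tensors) plus verification of the lower bound under extra hypotheses. So the honest object of comparison is the upper bound, and there your argument has a real gap.

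Your Vandermonde identity $d\,m_w^{d-1}y_w=\sum_k c_k^w(m_w+\alpha_k^w y_w)^d-c_w m_w^d$ produces a specific correction of the form $c_w m_w^d=m_w^{d-1}\cdot(c_w m_w)$, i.e.\ the shifted element $f-\sum_w c_w m_w^d$ lies in $\Ccal\Mod\Mcal$ only with the very particular choice $h_w=-c_w m_w$ proportional to $m_w$. You then assert this shifted polynomial ``admits a symmetric decomposition of size $\minsrk(\Ccal\Mod\Mcal)$,'' but that is false in general: the minimizer in $\Ccal\Mod\Mcal$ typically requires arbitrary linear forms $h_w$, not multiples of $m_w$, and tuning the scalars $\alpha_k^w$ only changes the multiple $c_w$, never the direction. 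The fix is to run the argument the other way, as the paper's Proposition~\ref{prop:conj_lower} does: \emph{start} from a symmetric minimizer $\tilde{\Tcal}=f+\sum_w g_w h_w$ in $\Ccal\Mod\Mcal$, observe $F-\tilde{\Tcal}=\sum_w m_w^{d-1}(d\,y_w-h_w)$, and note that each summand is a binary form $\ell^{d-1}m$ in the independent linear forms $m_w$ and $d\,y_w-h_w$, hence has real symmetric rank exactly $d$. Your Vandermonde computation is precisely a proof that $\ell^{d-1}m$ has rank $\le d$ and is not wasted; it just has to be applied to $m_w^{d-1}(d\,y_w-h_w)$ rather than to $m_w^{d-1}y_w$. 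The paper also performs a symmetrization step (averaging $w_i^{(1)},\ldots,w_i^{(d)}$) to ensure the $k$ residual terms are themselves in symmetric form; your polynomial bookkeeping builds this in implicitly.

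Your sketch of the lower bound correctly mirrors the structure of the open problem --- matching coefficients by $y$-degree, partitioning summands into those with and without $W$-components, and trying to force $d$ new terms per independent direction in $\Span\Mcal$ --- and you correctly flag parts (a) and (b) as the obstruction. This matches the paper's framing: the inequality is proved there only when $\minsrk(\Ccal\Mod\Mcal)\le 1$, under coprime-linear-forms hypotheses via~\cite{Carlini2012TS}, or for the specific $\Wcal$ of Section~\ref{sec:order6} via the cloning machinery imported from~\cite{shitov2020comon} (Propositions~\ref{prop:shitov1} and~\ref{prop:shitov2}), not in general.
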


\begin{proposition}
\label{prop:conj_lower}
Fix a symmetric tensor $\Ccal \in (\RR^{I})^{\otimes d}$, with 
$\Mcal \subset (\RR^{I})^{\otimes (d-1)}$ a finite set of symmetric decomposable tensors. Then
\[ \srk \SAdj ( \Ccal, \Mcal) \leq \minsrk (\Ccal \Mod \Mcal) + d \dim  \Span \Mcal. \] 
\end{proposition}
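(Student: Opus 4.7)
The plan is to exhibit a symmetric decomposition of $\SAdj(\Ccal, \Mcal)$ of size $s + dm$, where $s := \minsrk(\Ccal \Mod \Mcal)$ and $m := \dim \Span \Mcal$; I will work throughout with the polynomial representation from the introduction, so a symmetric decomposable tensor corresponds to a $d$-th power of a linear form. First I choose a subset $\{x_1^{\otimes(d-1)},\ldots,x_m^{\otimes(d-1)}\} \subseteq \Mcal$ forming a basis of $\Span \Mcal$, with each $x_\ell \in \RR^I$, and write every $x_w^{\otimes(d-1)} \in \Mcal$ as $\sum_{\ell} L_{w,\ell}\, x_\ell^{\otimes(d-1)}$. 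A direct computation gives the polynomial
\[
P_{\SAdj}(X,Y) \;=\; P_\Ccal(X) \;+\; d\sum_{\ell=1}^m x_\ell(X)^{d-1}\,\tilde Y_\ell, \qquad \tilde Y_\ell := \sum_w L_{w,\ell}\,Y_w,
\]
where $X_i$ ($i \in I$) and $Y_w$ ($w \in W$) are the variables, and the linear change of coordinates $Y_w \mapsto \tilde Y_\ell$ absorbs the redundancy of $\Mcal$ at the outset. I then fix a symmetric minimizer $\Tcal_0 \in \Ccal \Mod \Mcal$ with decomposition $\Tcal_0 = \sum_{i=1}^s \mu_i v_i^{\otimes d}$, and set $\Delta := \Tcal_0 - \Ccal$.

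Next I prove the structural claim $\Delta = \sum_{\ell=1}^m \nu_\ell\, x_\ell^{\otimes d}$ for some scalars $\nu_\ell$. Each mode-$1$ slice of $\Delta$ at $q \in I$ lies in $\Span \Mcal$, say $\sum_\ell \lambda_{q,\ell}\,x_\ell^{\otimes(d-1)}$, and the symmetry of $\Delta$ yields the identity
\[
\sum_{\ell=1}^m\bigl[\lambda_{q,\ell}\,x_\ell(k_2) - \lambda_{k_2,\ell}\,x_\ell(q)\bigr]\,x_\ell^{\otimes(d-2)} \;=\; 0
\]
for all $q, k_2 \in I$. Linear independence of $\{x_\ell^{\otimes(d-2)}\}$ then forces $\lambda_{q,\ell} = \nu_\ell\,x_\ell(q)$ for some $\nu_\ell \in \RR$, which gives the claim. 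Substituting, the residual polynomial $P_{\SAdj}(X,Y) - P_{\Tcal_0}(X)$ splits as $\sum_{\ell=1}^m\bigl(-\nu_\ell\,x_\ell(X)^d + d\,x_\ell(X)^{d-1}\,\tilde Y_\ell\bigr)$.

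Finally I decompose each $\ell$-th summand as a sum of $d$ $d$-th powers of linear forms in $X$ and $\tilde Y_\ell$ via a Vandermonde argument. Expanding $(\tilde Y_\ell + \alpha\,x_\ell(X))^d = \sum_{k=0}^d \binom{d}{k}\alpha^{d-k}\,x_\ell(X)^{d-k}\,\tilde Y_\ell^k$, matching the coefficients of $\tilde Y_\ell^k$ in $\sum_{i=1}^d c_i^{(\ell)}(\tilde Y_\ell + \alpha_i^{(\ell)}x_\ell(X))^d$ against $-\nu_\ell$ for $k = 0$, against $d$ for $k = 1$, and against $0$ for $k \geq 2$ gives a $(d+1) \times d$ linear system in the $c_i^{(\ell)}$ whose compatibility condition, read off from the cokernel of the Vandermonde matrix, is $\sum_i \alpha_i^{(\ell)} = -\nu_\ell$. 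Choosing $d$ distinct reals with this sum and solving gives the required $d$ terms. Summing these $dm$ rank-one terms across $\ell$ together with the $s$ terms from the decomposition of $\Tcal_0$ yields the claimed decomposition of $\SAdj(\Ccal,\Mcal)$. The main delicacy is the structural claim on $\Delta$ in the second paragraph, which rests on the linear independence of $\{x_\ell^{\otimes(d-2)}\}$; arranging this via a suitable choice of basis of $\Span \Mcal$ is the main obstacle in a concrete setting.
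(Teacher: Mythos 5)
Your structural claim on $\Delta = \Tcal_0 - \Ccal$ in the second paragraph is false, and the argument for it already breaks at its first step. By the definition of $\Ccal \Mod \Mcal$, the entries of $\Delta$ are
\[
\Delta(k_1|\ldots|k_d) = M_1^{(k_1)}(k_2|\ldots|k_d) + M_2^{(k_2)}(k_1|k_3|\ldots|k_d) + \cdots + M_d^{(k_d)}(k_1|\ldots|k_{d-1}),
\]
with all $M_j^{(k_j)} \in \Span\Mcal$, and these tensors may differ across $j$ and $k_j$. Fixing $k_1 = q$, only the first summand $M_1^{(q)}$ lies in $\Span\Mcal$; each of the remaining $d-1$ summands evaluates a $\Span\Mcal$ tensor at the fixed value $q$ in one of \emph{its own} slots, contributing a tensor of the form $(\text{some vector})\otimes x_\ell^{\otimes(d-2)}$ up to permutation, which is not generically in $\Span\Mcal$. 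So the mode-$1$ slice of $\Delta$ at $q$ need not lie in $\Span\Mcal$. Concretely, for $d=3$, $|I|=2$, $\Mcal=\{x^2\}$ with $x=e_1$, the symmetric tensors $\Delta$ that arise form the two-dimensional space $\langle x^3, x^2y\rangle$ rather than the one-dimensional $\langle x^3\rangle$ your claim predicts; the mode-$1$ slice of $x^2y$ at $q=1$ is a nonzero multiple of $xy$, not of $x^2$.

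The proof is repairable because $\Delta$ does satisfy a weaker structure that suffices: the expansion above yields $P_\Delta(X) = \sum_{\ell=1}^m A_\ell(X)\,x_\ell(X)^{d-1}$ for \emph{arbitrary} linear forms $A_\ell$ in the $X$-variables, not necessarily scalar multiples of $x_\ell$. The residual then becomes $P_{\SAdj}-P_{\Tcal_0} = \sum_{\ell=1}^m x_\ell(X)^{d-1}\bigl(d\tilde Y_\ell - A_\ell(X)\bigr)$, a sum of $m$ polynomials each of the form $\ell_1^{d-1}\ell_2$ with $\ell_1,\ell_2$ linearly independent (since $\tilde Y_\ell\neq 0$ and involves only $Y$-variables while $x_\ell$ and $A_\ell$ involve only $X$-variables). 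Each summand therefore has symmetric rank exactly $d$; your Vandermonde computation in the final paragraph re-derives this and goes through unchanged once $-\nu_\ell\, x_\ell(X)$ is replaced by $-A_\ell(X)$. With that correction, your argument coincides in substance with the paper's: the paper writes $\SAdj(\Ccal,\Mcal) = \Tcal + \sum_i\bigl(v_i^{\otimes(d-1)}\otimes w_i^{(d)}+\cdots+w_i^{(1)}\otimes v_i^{\otimes(d-1)}\bigr)$ with unconstrained tails $w_i^{(j)}$, then averages over cyclic rotations to obtain $\sum_i\ell_{v_i}^{d-1}\ell_{w_i}$; your $d\tilde Y_\ell - A_\ell$ is exactly that averaged $\ell_{w_\ell}$ written in polynomial coordinates. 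The concern you flag about linear independence of $\{x_\ell^{\otimes(d-2)}\}$ would indeed be an additional problem (it does not follow from linear independence of $\{x_\ell^{\otimes(d-1)}\}$), but it becomes moot once the false premise is dropped.
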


\begin{proof}
Let $k := \dim \Span \Mcal$. 
Reorder so that the first $k$ tensors in $\Mcal$ are linearly independent and
denote the $i$th tensor in $\Mcal$ by $v_i^{\otimes (d-1)}$.
Let~$\Tcal \in (\RR^I)^{\otimes d}$ be a tensor of minimal symmetric rank in $\Ccal\Mod\Mcal$. We view $\Tcal$ as a tensor in $(\RR^{I \cup W})^{\otimes d}$ under the inclusion of index sets $I \subset I \cup W$, this is called padding in~\cite[Definition 7.6]{shitov2020comon}.
Then  
\begin{equation}
\label{eqn:dk_decomp}
\SAdj(\Ccal,\Mcal) =  \Tcal +
    \sum_{i = 1}^k \left( v_i^{\otimes (d-1)} \otimes w_i^{(d)} + \cdots + w_i^{(1)} \otimes v_i^{\otimes (d-1)} \right),
\end{equation} 
for some $w_i^{(j)} \in \RR^{I\cup W}$, where $i \in \{ 1, \ldots, k\}$ and $j \in \{ 1, \ldots, d\}$.
Permuting indices in~\eqref{eqn:dk_decomp} gives another expression for the symmetric tensor $\SAdj(\Ccal,\Mcal)$. 
Averaging over all rotations of indices, gives
\begin{equation}
    \label{eqn:dk_decomp_sym}
   \SAdj(\Ccal,\Mcal) =  \Tcal +
    \sum_{i = 1}^k \ell_{v_i}^{d-1} \ell_{w_i},
\end{equation}
where the coefficients of $\ell_{v_i}$ and $\ell_{w_i}$ are the vectors $v_i$ and $w_i = \frac{1}{d} ( w_i^{(1)} + \cdots + w_i^{(d)})$.
Each tensor $\ell_{v_i}^{d-1} \ell_{w_i}$ has symmetric rank $d$, since $v_i \neq w_i$. The symmetric rank of $\SAdj(\Ccal,\Mcal)$ is therefore at most~$\rk \Tcal + dk$.
\end{proof}

\begin{proposition}
If $\minsrk (\Ccal \Mod \Mcal) \leq 1$ then Conjecture~\ref{conj:modspan} holds.
\end{proposition}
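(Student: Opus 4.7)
The plan is to reduce the symmetric bound to the non-symmetric bound already available from Corollary~\ref{cor:adoined_substitution}, using Proposition~\ref{prop:minrkA} to identify $\minsrk$ with $\minrk$ in the relevant regime, and then recover equality in the decomposable case via Proposition~\ref{prop:conj_lower}.

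More concretely, set $\Acal := \Ccal \Mod \Mcal$ and assume $\minsrk \Acal \leq 1$. First I would apply Proposition~\ref{prop:minrkA} to conclude
\[ \minsrk \Acal = \minrk \Acal. \]
Next, I would use the trivial bound $\srk \SAdj(\Ccal,\Mcal) \geq \rk \SAdj(\Ccal,\Mcal)$ together with Corollary~\ref{cor:adoined_substitution} applied to $\SAdj(\Ccal,\Mcal) = \Adjoin(\Ccal,\Mcal,\ldots,\Mcal)$, which yields
\[ \rk \SAdj(\Ccal,\Mcal) \, \geq \, \minrk(\Ccal \Mod \Mcal) + d \dim \Span \Mcal . \]
Chaining the two displays gives~\eqref{eqn:conj_ineq}.

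For the equality assertion of Conjecture~\ref{conj:modspan}, suppose in addition that $\Mcal$ consists of decomposable tensors. Then Proposition~\ref{prop:conj_lower} supplies the reverse inequality $\srk \SAdj(\Ccal,\Mcal) \leq \minsrk(\Ccal \Mod \Mcal) + d \dim \Span \Mcal$, which combines with the inequality just established to give equality.

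There is essentially no obstacle: the only subtle point is verifying that the hypothesis $\minsrk \Acal \leq 1$ is exactly the range in which Proposition~\ref{prop:minrkA} lets us swap $\minsrk$ for $\minrk$, which is what allows the non-symmetric substitution lower bound to imply its symmetric analogue. Outside this range, the same chain of inequalities only produces $\srk \SAdj(\Ccal,\Mcal) \geq \minrk(\Ccal \Mod \Mcal) + d \dim \Span \Mcal$, which is in general strictly weaker than the conjectured bound — so this argument cannot be pushed further without a genuinely new ingredient.
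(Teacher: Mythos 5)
Your proposal is correct and follows essentially the same route as the paper: invoke Proposition~\ref{prop:minrkA} to identify $\minsrk(\Ccal \Mod \Mcal)$ with $\minrk(\Ccal \Mod \Mcal)$, use Corollary~\ref{cor:adoined_substitution} together with $\srk \geq \rk$ for the lower bound, and cite Proposition~\ref{prop:conj_lower} for equality when $\Mcal$ is decomposable. No gaps.
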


\begin{proof}
Corollary~\ref{cor:adoined_substitution} gives $\rk\SAdj(\Ccal,\Mcal) \geq \minrk (\Ccal \Mod \Mcal) + d\dim\Span \Mcal$. This is the lower bound in the conjecture, since $\minrk (\Ccal \Mod \Mcal) = \minsrk (\Ccal \Mod \Mcal)$ by Proposition~\ref{prop:minrkA}.
Equality when $\Mcal$ consists of decomposable tensors is Proposition~\ref{prop:conj_lower}.
\end{proof}

When the tensors in $\Mcal$ are decomposable,~\eqref{eqn:dk_decomp_sym} is an expression for $\SAdj(\Ccal,\Mcal)$, where $\Tcal$ is a tensor of minimal symmetric rank in $\Ccal \Mod \Mcal$. 
Since 
the linear powers $\{ \ell_{v_i}^{d-1} \mid i \in \{ 1, \ldots, k \} \}$
are a basis of $\Mcal$, 
they
 are linearly independent. The linear forms $\{ \ell_{w_i} \mid i \in \{ 1, \ldots, k \} \}$ are also linearly independent, since their coordinates in $W$ give the coefficient of $v_i^{\otimes (d-1)}$ in each element of $\Mcal$. In the presence of further linear independence assumptions, we can prove Conjecture~\ref{conj:modspan}.

\begin{proposition}
Fix $\SAdj(\Ccal,\Mcal) = \Tcal +
    \sum_{i = 1}^k \ell_{v_i}^{d-1} \ell_{w_i}$,
where $\Tcal = \sum_{j=1}^r x_j^{\otimes d}$ is a tensor of minimal symmetric rank in $\Ccal \Mod \Mcal$.
If the linear forms $\ell_{v_i}, \ell_{w_i}, x_j$ are all linearly independent, for $i \in \{1, \ldots, k\}$ and $j \in \{ 1, \ldots, r\}$, then Conjecture~\ref{conj:modspan} holds.
\end{proposition}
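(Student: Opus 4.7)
By Proposition~\ref{prop:conj_lower}, the upper bound $\srk \SAdj(\Ccal,\Mcal) \leq r + dk$ holds because $\Mcal = \{\ell_{v_i}^{d-1}\}_{i=1}^k$ consists of symmetric decomposable tensors. I will establish the matching lower bound via the $(d-1)$st symmetric decomposable flattening rank, which satisfies $\sdrk_{d-1} \SAdj(\Ccal,\Mcal) \leq \srk \SAdj(\Ccal,\Mcal)$ by Proposition~\ref{prop:sdrkineq}.

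The first step is to identify the slice space. Treating $\SAdj(\Ccal,\Mcal) = \sum_{j=1}^r x_j^d + \sum_{i=1}^k \ell_{v_i}^{d-1}\ell_{w_i}$ as a polynomial and using the linear independence of $\{x_j, \ell_{v_i}, \ell_{w_i}\}$ to select dual directions that isolate each summand, I will show
$$\Lcal_{d-1} = \Span\bigl\{x_j^{d-1},\ \ell_{v_i}^{d-1},\ \ell_{v_i}^{d-2}\ell_{w_i} : j \in [r],\ i \in [k]\bigr\},$$
which has dimension $r + 2k$. Linear independence further yields a direct sum decomposition of the ambient span $V = \bigoplus_{j=1}^r \langle x_j\rangle \oplus \bigoplus_{i=1}^k \langle \ell_{v_i}, \ell_{w_i}\rangle$, inducing a multi-grading on the space of symmetric $(d-1)$-tensors under which $\Lcal_{d-1}$ lies entirely in ``pure'' multi-degrees (with all weight concentrated in a single block).

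Given any family $\{u_l^{d-1}\}_{l=1}^s$ whose linear span contains $\Lcal_{d-1}$, restricting along a complement of $V$ lets me assume $u_l \in V$ for every $l$, since the restriction sends $u^{d-1}$ to $(u|_V)^{d-1}$ and fixes $\Lcal_{d-1}$. For each $i$, projecting to the $i$th block sends $\Lcal_{d-1}$ onto $\Span\{\ell_{v_i}^{d-1}, \ell_{v_i}^{d-2}\ell_{w_i}\}$, and a direct computation generalising Example~\ref{exmp:L_2} shows that spanning a subspace of the form $\Span\{v^{d-1}, v^{d-2} w\}$ by $(d-1)$-th powers of linear forms in $\langle v, w\rangle$ requires at least $d$ terms, because $v^{d-1}$ is (up to scalar) the only $(d-1)$-th power sitting inside this two-dimensional subspace. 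Hence at least $d$ of the $u_l$ have nonzero projection to block $i$, and similarly each $\langle x_j\rangle$ block forces at least one $u_l$ to contribute.

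The main obstacle is that these index sets may overlap: a single $u_l$ with support across several blocks could be counted for each. I plan to resolve this by exploiting that $\Lcal_{d-1}$ sits entirely in pure multi-degrees: any $u_l$ supported in more than one block contributes non-trivial cross multi-degree components to $u_l^{d-1}$, and their cancellation in every realisation $\sum_l \lambda_l u_l^{d-1} \in \Lcal_{d-1}$ forces compensating contributions from additional $u_l$'s. Formalising this ``no saving from mixed blocks'' principle, for instance by converting any mixed-block decomposition into a single-block one of no greater length, is the technical heart of the argument. Once this is done, the single-block bounds sum to $s \geq \sum_j 1 + \sum_i d = r + dk$, matching the upper bound and proving Conjecture~\ref{conj:modspan} under the linear independence hypothesis.
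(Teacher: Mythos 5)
Your approach is genuinely different from the paper's and, as written, contains a gap you yourself flag as ``the technical heart'': the ``no saving from mixed blocks'' principle. This is precisely a Strassen-type additivity statement for (partially) symmetric rank across a direct sum decomposition of the underlying vector space, and such additivity is not automatic. Concretely, if $u_l$ has nonzero projection to two blocks, then $u_l^{\otimes(d-1)}$ produces mixed multi-degree components; in an arbitrary spanning set, the mixed components of \emph{different} $u_l$'s can cancel among themselves, so the asserted ``compensating contributions from additional $u_l$'s'' is not forced. The proposed conversion of a mixed-block spanning set into a single-block one of no greater cardinality is exactly what would need to be proved, and it is not supplied. Your per-block bounds (at least $d$ contributing $u_l$'s for each $\langle v_i,w_i\rangle$ block, via Lemma~\ref{lem:decomp_mu}, and at least one for each $\langle x_j\rangle$) are fine, but without the additivity step you only get $s\geq\max(d,1)$, not the sum $r+dk$.

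The paper's actual proof sidesteps the whole flattening/block argument. It uses only the upper bound of Proposition~\ref{prop:conj_lower} together with a one-line lower bound: under the linear independence hypothesis, after a change of coordinates the polynomial $\sum_j x_j^d+\sum_i \ell_{v_i}^{d-1}\ell_{w_i}$ becomes a sum of pairwise coprime monomials, and by Carlini--Catalisano--Geramita \cite[Theorem 3.2]{Carlini2012TS} its \emph{complex} Waring rank is exactly $r\cdot 1+k\cdot d$. Since real symmetric rank is bounded below by complex symmetric rank, this gives $\srk\geq r+dk$, matching the upper bound. In other words, the additivity you would need to establish is, for coprime monomials over $\mathbb{C}$, a known theorem, and citing it is both sufficient and much cleaner than re-deriving a special case via $\sdrk_{d-1}$ and a multi-grading argument. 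If you want to pursue your route, you would have to prove the cancellation lemma, and that is essentially as hard as the result you are trying to obtain.
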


\begin{proof}
As in the proof of Proposition~\ref{prop:conj_lower}, we view $\Tcal \in (\RR^I)^{\otimes d}$ as a tensor in $(\RR^{I\cup W})^{\otimes d}$.
Complex rank lower bounds real rank.
 The complex symmetric rank
of $\sum_{i=1}^r x_i^{\otimes d} + 
\sum_{i = 1}^k \ell_{v_i}^{d-1} \ell_{w_i}$ is $r+dk$, by \cite[Theorem 3.2]{Carlini2012TS}, since it is a sum of coprime monomials, $r$ of rank one and $k$ of rank $d$.
\end{proof}

\subsection{Comparison of lower bounds}

Theorem~\ref{thm:A} gives lower bounds on the rank and symmetric rank of a tensor, by combining the decomposable flattening rank with Sylvester's rank inequality. In this section, we compare these lower bounds to those of the substitution method (Theorem~\ref{thm:substitution} and Conjecture~\ref{conj:modspan}). We see that Theorem~\ref{thm:A} can prove Conjecture~\ref{conj:modspan} in special cases. We also compare to the lower bounds from a single unfolding and to~\cite{landsberg2010ranks}.

 \begin{lemma}\label{lem:decomp_mu}
Fix $f = x^{d-1}(\alpha x + d y)$.
The rank $d$ symmetric decompositions of $f$ are
$$
\sum_{i=1}^d\frac{(\la_ix+y)^d}{\prod_{j:j\neq i}(\la_i-\la_j)} , \quad \text{ where } \la_1,...,\la_d \in \RR \text{ are distinct and } \alpha = \sum_{i=1}^d \lambda_i.$$
\end{lemma}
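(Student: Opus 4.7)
My plan is to establish both directions of the equivalence: that the stated formula produces $f$ whenever the $\la_i$ are distinct reals and $\al = \sum_i \la_i$, and conversely that every rank-$d$ symmetric decomposition of $f$ has this form.

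For the forward direction, I expand each $(\la_i x + y)^d$ via the binomial theorem and interchange sums, writing the proposed expression as $\sum_{k=0}^d \binom{d}{k} \sigma_k \, x^k y^{d-k}$, where $\sigma_k := \sum_i \la_i^k / \prod_{j \neq i}(\la_i - \la_j)$. Lagrange interpolation at the distinct nodes $\la_1, \ldots, \la_d$ identifies $\sigma_k$ as the coefficient of $\la^{d-1}$ in the unique polynomial of degree $\leq d-1$ agreeing with $\la^k$ on these nodes. For $k \leq d-1$ this interpolant is $\la^k$ itself, yielding $\sigma_k = 0$ for $k \leq d-2$ and $\sigma_{d-1} = 1$. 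For $k = d$, writing $\la^d = \prod_j(\la - \la_j) + L(\la)$ with $L$ the degree-$(d-1)$ interpolant of $\la^d$ and matching coefficients of $\la^{d-1}$ yields $\sigma_d = \sum_j \la_j = \al$. Thus the sum equals $\al x^d + d x^{d-1} y = f$.

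For the converse, fix a rank-$d$ symmetric decomposition $f = \sum_{i=1}^d c_i (a_i x + b_i y)^d$; the linear forms must be pairwise non-proportional with $c_i \neq 0$, else two terms would combine and drop the rank below $d$. A direct verification identifies the apolar ideal as $f^\perp = (Y^2, X^{d-1}(X - \al Y))$, so every degree-$d$ element of $f^\perp$ has the form $c_0 X^{d-1}(X - \al Y) + Y^2 q(X,Y)$; evaluating at $Y = 0$ shows any such element divisible by $Y$ is already divisible by $Y^2$. The decomposition yields the element $\prod_i (b_i X - a_i Y) \in f^\perp_d$, a product of $d$ pairwise non-proportional linear factors, which cannot carry a repeated $Y$-factor; hence $b_i \neq 0$ for all $i$. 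Rescaling $b_i = 1$ and setting $\mu_i := c_i$, $\la_i := a_i$, and matching coefficients of $\binom{d}{k} x^k y^{d-k}$ in $\sum_i \mu_i (\la_i x + y)^d = f$ yields $\sum_i \mu_i \la_i^k = 0$ for $k \leq d-2$, $= 1$ for $k = d-1$, and $= \al$ for $k = d$. The first $d$ equations form a Vandermonde system with distinct nodes, uniquely solved by $\mu_i = 1 / \prod_{j \neq i}(\la_i - \la_j)$; substituting into the final equation and reusing the Lagrange identity from the forward direction gives $\al = \sum_i \la_i$.

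The main obstacle is the apolar-ideal computation ruling out $b_i = 0$, which simultaneously certifies that $f$ has rank exactly $d$: for $r < d$, the same analysis shows $f^\perp_r = Y^2 \, \RR[X,Y]_{r-2}$, so no degree-$r$ element factors into $r$ pairwise non-proportional linear forms and no rank-$r$ decomposition exists. Once this structural fact is in hand, the Vandermonde bookkeeping is routine.
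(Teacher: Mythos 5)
Your proof is correct, and it reaches the result by a genuinely different route from the paper's in the key structural step. The paper first cites the known fact that $x^{d-1}y$ has rank $d$ and uses invariance under change of basis to get $\rk f = d$ for every $\alpha$; it then excludes a summand $\lambda x^d$ by a subtraction trick (such a summand would give a rank-$(d-1)$ decomposition of $x^{d-1}((\alpha-\lambda)x+dy)$, a member of the same family); finally it solves the coefficient-matching system by showing $\det A=(\alpha-\sum_i\la_i)\det V$ via an alternating-functions degree argument and extracting the $\mu_i$ by Cramer's rule. You instead compute the graded pieces of the apolar ideal, $f^\perp=(Y^2,\,X^{d-1}(X-\alpha Y))$, and use the easy direction of the Apolarity Lemma: the degree-$d$ form $\prod_i(b_iX-a_iY)$ attached to a decomposition lies in $f^\perp_d$, and since every element of $f^\perp_d$ divisible by $Y$ is divisible by $Y^2$, no $b_i$ can vanish; the same computation gives $(f^\perp)_r=Y^2\,\RR[X,Y]_{r-2}$ for $r<d$, which certifies $\rk f\geq d$ without any citation. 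Your remaining coefficient solve via Lagrange interpolation identities is essentially the same linear algebra as the paper's Vandermonde determinant plus Cramer's rule, only phrased as leading coefficients of interpolants. What each buys: the paper's argument is shorter once one accepts the quoted rank of $x^{d-1}y$ and stays entirely elementary; yours is self-contained, fits the apolarity toolkit the paper already deploys in Proposition~\ref{prop:eq12}, and delivers the exclusion of $x^d$-summands and the minimality $\rk f=d$ in one stroke. (Your normalization $\mu_i=1/\prod_{j\neq i}(\la_i-\la_j)$ also matches the statement of the lemma exactly.)
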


\begin{proof}
The polynomial $x^{d-1} y$ has rank $d$~\cite[Proposition 5.6]{comon2008symmetric}. Hence $f$ has rank~$d$ for all $\alpha$, since the rank is unchanged by invertible change of basis. This means there does not exist a rank $d$ decomposition of $f$ with summand $\lambda x^d$: if there were, we would have a symmetric decomposition of 
$x^{d-1}( (\alpha-\lambda) x + dy)$
of rank $d-1$.
Hence we restrict to decompositions
$
\sum_{i=1}^d\mu_i(\la_ix+y)^d,
$
for scalars $\mu_i$ and $\lambda_i$. Equating coefficients, finding a decomposition is equivalent to finding a linear relation, with non-zero coefficient of the first row, among the the rows of the $(d + 1) \times (d+1)$ matrix
$$
A=\begin{pmatrix}
\alpha &1&0&\cdots&0\\
\la_1^d&\la_1^{d-1}&\la_1^{d-2}&\cdots&1\\
\vdots& & & &\vdots\\
\la_d^d&\la_d^{d-1}&\la_d^{d-2}&\cdots&1
\end{pmatrix}. \qquad \text{Let} \quad
 B = \begin{pmatrix}
\la_1^d&\la_1^{d-2}&\ldots&1\\
\vdots& & &\vdots\\
\la_d^d&\la_d^{d-2}&\ldots&1
\end{pmatrix},
$$
then $\det A = \alpha \det V - \det B$, where $V$ is the $d \times d$ Vandermonde matrix. 
The ratio $\frac{\det B}{\det V}$ is $(\lambda_1 + \cdots + \lambda_d)$, as follows. Both $\det B$ and $\det V$ are alternating functions, with $\det B$ degree one higher than $\det V$. Hence their ratio is a symmetric function of degree~$1$, a scalar multiple of $(\lambda_1 + \cdots + \lambda_d)$. It remains to compare coefficients to see that the scalar multiple is one.
Hence $\det A =(\alpha-(\la_1+\ldots+\la_d))\det V$, cf.~\cite[Proposition 5.6]{comon2008symmetric}.

The condition $\alpha=\la_1+\cdots+\la_d$ holds on the component of the solution that uses a non-zero multiple of the first row. To find $\mu_1,\ldots,\mu_d$, we write
$$
\begin{pmatrix}
\mu_1&\mu_2&\ldots&\mu_d
\end{pmatrix}
\begin{pmatrix}
\la_1^{d-1}&\la_1^{d-2}&\ldots&1\\
\vdots& & & \vdots \\
\la_d^{d-1}&\la_d^{d-2}&\ldots&1
\end{pmatrix}=
\begin{pmatrix}
1&0&\ldots&0
\end{pmatrix}
$$
By Cramer's rule, we conclude that $\mu_i=\frac{(-1)^{i+1}\det A_{i1}}{\det A}=(\prod_{j:j\neq i}(\la_j-\la_i))^{-1}$ where $A_{ij}$ is the sub-matrix of $A$ with $i$th row and $j$th column deleted.
\end{proof}

\begin{proposition}
Assume $d = 2 \delta$ is even, let $\Mcal = \{ v^{\otimes (d-1)} \}$, and let $\Tcal = \sum_{j=1}^r x_j^{\otimes d}$ be a tensor of minimal symmetric rank in $\Ccal \Mod \Mcal$.
If $x_1^{\otimes \delta},\ldots,
x_r^{\otimes \delta}, v^{\otimes \delta}$ are linearly independent, then Conjecture~\ref{conj:modspan} holds for $\SAdj(\Ccal, \Mcal)$.
\end{proposition}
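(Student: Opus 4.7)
The plan is to apply the symmetric form of Theorem~\ref{thm:A} to $\Tcal^\star := \SAdj(\Ccal,\Mcal)$ with $j = \delta$. Since $d = 2\delta$, this gives
\[
\srk \Tcal^\star \;\geq\; 2\,\sdrk_\delta\Tcal^\star - \rk(\Tcal^\star)^{(\delta)}.
\]
Proposition~\ref{prop:conj_lower} provides the matching upper bound $\srk \Tcal^\star \leq r + d = r + 2\delta$, since $\Mcal$ is decomposable, so it suffices to prove $\rk(\Tcal^\star)^{(\delta)} \leq r+2$ together with $\sdrk_\delta \Tcal^\star \geq r+\delta+1$, which combine to give the target $\srk \Tcal^\star \geq 2(r+\delta+1) - (r+2) = r + d$.

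To bound the $\delta$-flattening rank I would compute the slice space $\Lcal_\delta(\Tcal^\star)$ directly from the adjoining formula $\Tcal^\star = \Ccal + \sum_{j=1}^{d} v^{\otimes(j-1)} \otimes e_w \otimes v^{\otimes(d-j)}$, with $e_w \in \RR^{I \cup W}$ the basis vector for the adjoined index. Slices whose $d-\delta$ fixed indices all lie in $I$ span $\Lcal_\delta(\Ccal) + \RR\cdot V$, where $V := \sum_{i=1}^\delta v^{\otimes(i-1)} \otimes e_w \otimes v^{\otimes(\delta-i)}$; slices with exactly one fixed index equal to $w$ contribute only $\Span\{v^{\otimes\delta}\}$. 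The relation $\Tcal \in \Ccal \Mod \Mcal$ with $|\Mcal|=1$ forces $\Tcal$ and $\Ccal$ to differ by a symmetrization of $d\mu \otimes v^{\otimes(d-1)}$ for some linear form $\mu \in \RR^I$, so $\Lcal_\delta(\Ccal)$ is contained in the span of $\{x_j^{\otimes\delta}\}_{j=1}^r$, $v^{\otimes\delta}$, and the symmetrization of $\mu \otimes v^{\otimes(\delta-1)}$; by the linear independence hypothesis this has dimension at most $r+2$.

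The main work is the lower bound $\sdrk_\delta \Tcal^\star \geq r + \delta + 1$. For any family of symmetric decomposable tensors $\{y_i^{\otimes\delta}\}_{i=1}^s$ whose span contains $\Lcal_\delta$, decompose $y_i = f_i + a_i e_w$ with $f_i \in \RR^I$ and $a_i \in \RR$, and set $B := \{i : a_i \neq 0\}$. Writing $V = \sum_i c_i y_i^{\otimes\delta}$ and comparing coefficients graded by powers of $e_w$ yields, after replacing $e_w$ with a formal variable $t$, the polynomial identity
\[
\sum_{i \in B} b_i (f_i + a_i t)^{\delta - 1} \;=\; \tfrac{1}{\delta}\, v^{\otimes(\delta-1)}.
\]
Projecting to the two-dimensional subspace $\Span\{v, t\}$ reduces this to the $|I| = 1$ case of Lemma~\ref{lem:decomp_mu} and forces $|B| \geq \delta$. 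A Vandermonde computation on the parameters $(\lambda_i, a_i)$ appearing in that lemma shows that the hyperplane of $\mathrm{Sym}^\delta \Span\{v, e_w\}$ spanned by $(\lambda_i v + a_i e_w)^{\otimes\delta}$ does not contain $v^{\otimes\delta}$. Separately, the projection $e_w \mapsto 0$ sends $\Span\{y_i^{\otimes\delta}\}$ onto $\Span\{f_i^{\otimes\delta}\}$, which must contain $\Lcal_\delta(\Ccal) \supseteq \Span\{x_j^{\otimes\delta}, v^{\otimes\delta}\}$, of dimension at least $r + 1$. Combining, the $B$-type tensors cannot supply $v^{\otimes\delta}$, so at least one additional rank-one tensor is required for that direction on top of the $\delta$ $B$-type ones and $r$ further $A$-type tensors needed for the $x_j^{\otimes\delta}$-directions modulo $\Span\{v^{\otimes\delta}\}$, giving $s \geq r + \delta + 1$.

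The main obstacle is the last counting step when $|I| \geq 2$. The components of each $f_i$ orthogonal to $v$ vanish automatically when $|I| = 1$ but provide extra degrees of freedom in general, so one must verify that the $B$-type tensors, viewed inside the full $(\RR^{I \cup W})^{\otimes\delta}$ rather than just $\mathrm{Sym}^\delta \Span\{v, e_w\}$, still cannot contribute the $v^{\otimes\delta}$ direction to $\Lcal_\delta(\Ccal)$ upon projection. The natural route is the ``mixed'' cancellation conditions that arise from the full multinomial expansion of $(f_i + a_i t)^{\delta-1}$, or an apolarity analysis of $v^{\otimes(\delta-1)}$ in the symmetric algebra on $\RR^I \oplus \RR t$. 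Once these conditions are shown to push the $B$-type projections into $\Span\{v^{\otimes\delta}\}$, the counting above closes and the conjecture follows.
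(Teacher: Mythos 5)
Your strategy is the paper's: apply Theorem~\ref{thm:A} with $j=\delta$, bound the rank of the $\delta$-flattening by $r+2$, and prove $\sdrk_\delta \geq r+\delta+1$, with Proposition~\ref{prop:conj_lower} supplying equality. But the decisive step is not actually proved, and you say so yourself: whether the ``$B$-type'' tensors (those with nonzero $e_w$-component, at least $\delta$ of which are forced by the degree-one-in-$e_w$ equation) are genuinely additional to the $r+1$ rank-one tensors needed for $\langle x_1^{\otimes\delta},\ldots,x_r^{\otimes\delta},v^{\otimes\delta}\rangle$ is deferred to an unspecified apolarity/multinomial analysis. Moreover your two formulations of the missing condition are not equivalent: ``the $B$-type projections lie in $\Span\{v^{\otimes\delta}\}$'' only yields $|A|\geq r$ and hence $s\geq r+\delta$, one short of the needed $r+\delta+1$; only the stronger statement (they cannot supply the $v^{\otimes\delta}$ direction at all) closes the count, and your Vandermonde argument for it is stated only inside $\mathrm{Sym}^\delta\langle v,e_w\rangle$ and only for exactly $\delta$ projected terms. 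The paper gets past exactly this obstacle by absorbing the correction $\Tcal-\Ccal$ into the adjoined direction: it writes $\SAdj(\Ccal,\Mcal)=\Tcal+v^{d-1}w$ for a single vector $w$ with nonzero component along the adjoined basis vector, projects any spanning family onto the two-dimensional subspace $\langle v,w\rangle$ (which annihilates precisely the components of the $f_i$ orthogonal to $v$ that you flag), and invokes Lemma~\ref{lem:decomp_mu} there to produce $\delta$ members with nonzero $w$-coefficient on top of the $r+1$ needed for the $w$-free part.

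A second, smaller problem: your bookkeeping does not establish $\rk(\Tcal^\star)^{(\delta)}\leq r+2$. Expanding around $\Ccal$ you list four kinds of directions --- the $x_j^{\otimes\delta}$, $v^{\otimes\delta}$, the symmetrization of $\mu\otimes v^{\otimes(\delta-1)}$, and $V$ --- which is $r+3$ generators, and $r+3$ in Theorem~\ref{thm:A} only gives $\srk\geq r+d-1$. The bound $r+2$ holds because in every slice with fixed indices in $I$ the coefficients of $V$ and of the $\mu$-symmetrization are proportional (both equal to the product of the $v$-entries at the fixed indices, up to constants), so only the single combination $v^{\delta-1}w$ with $w=e_w-\mu$ ever occurs; this is again the paper's reformulation $\SAdj(\Ccal,\Mcal)=\Tcal+v^{d-1}w$, which exhibits the slice space as $\langle x_1^{\otimes\delta},\ldots,x_r^{\otimes\delta},v^{\otimes\delta},v^{\delta-1}w\rangle$. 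For the same reason, $V$ itself need not lie in $\Lcal_\delta$, so your identity $V=\sum_i c_i y_i^{\otimes\delta}$ should be written for $v^{\delta-1}w$ instead; with these repairs your steps 1 and 2 match the paper, but step 3 remains a genuine gap.
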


\begin{proof}
Let $\Ucal = \SAdj(\Ccal,\Mcal)$.
Conjecture~\ref{conj:modspan} is the inequality $\srk \Ucal \geq r + d$, since $\dim \Span \Mcal = 1$.
We write $\Ucal = \sum_{j=1}^r x_j^{\otimes d} + v^{d-1} w$, where $v^{d-1}w$ is shorthand for
 $v^{\otimes (d-1)} \otimes w + v^{\otimes (d-2)} \otimes w \otimes v + \cdots + w \otimes v^{\otimes (d-1)}$.
The slice space of order $\delta$ slices of $\Ucal$ is
\begin{equation}
    \label{eqn:Lj_ex}
    \Lcal_\delta = \langle \, x_1^{\otimes \delta}, \ldots, x_r^{\otimes \delta}, \, v^{\otimes \delta}, \, v^{\delta-1} w \rangle.
\end{equation}
The vector
$w$ is not in $\langle x_1,\ldots,x_r, v \rangle$, since it has a non-zero component along the adjoined basis vector. 
Hence $\Lcal_\delta$ is a linear space of dimension $r + 2$, i.e. $\rk \Ucal^{(\delta)} = r + 2$.
We therefore have the inequality $\srk \Ucal \geq 2\sdrk_\delta \Ucal -(r+2)$, by Theorem~\ref{thm:A}. 
It remains to show that $\sdrk_\delta \Ucal \geq r+\delta + 1$. At least $r+1$ rank one tensors are needed to span the subspace $\langle x_1^{\otimes \delta}, \ldots, x_r^{\otimes \delta}, v^{\otimes \delta} \rangle$, since all the rank one tensors appearing in it are linearly independent, by assumption. It remains to consider $v^{\delta-1} w$. 

A decomposition of $v^{\delta-1}w$ must have at least $\delta$ linearly independent rank one terms, by Lemma~\ref{lem:decomp_mu}. Project the decomposition to the subspace $\langle v, w \rangle$ and consider it in the basis $\{ v, w \}$. In at least $\delta$ terms in the decomposition, the vector $w$ has non-zero coefficient, 
by the proof of Lemma~\ref{lem:decomp_mu}. Each of these $\delta$ terms are not in the span of the others, hence $\sdrk_\delta \Ucal \geq r + 1 + \delta$.
\end{proof}

\begin{remark}
\label{rmk:conj_more}
We explain how Theorem~\ref{thm:A} might prove Conjecture~\ref{conj:modspan} for $k:= \dim \Span \Mcal > 1$. We need to show that at least $r + k (\delta + 1)$ decomposable symmetric tensors are needed to span $\Lcal_\delta$. The idea is to show that each new rank $\delta$ tensor ${v_i}^{\delta - 1} w_i$ from~\eqref{eqn:dk_decomp_sym} requires at least $\delta$ new decomposable tensors. The challenge is to rule out the possibility of overlap between the different decompositions.
\end{remark}
 
Both Theorem~\ref{thm:A} and the substitution method (Theorem~\ref{thm:substitution}) lower bound the rank of a tensor in terms of the rank of tensors of strictly smaller size or order. 
 In both approaches, there is a trade-off: larger, higher order tensors may give better lower bounds, but it is more difficult to find their rank.

We compare Theorem~\ref{thm:A} to the substitution method for the tensor 
$\Tcal = x^4-3y^4+ 12x^2yz+12xy^2w$ from~\eqref{eqn:quartic_ex}.
We see that
Theorem~\ref{thm:A} can give a better lower bound than the substitution method.
Corollary~\ref{cor:geq12} explains how Theorem~\ref{thm:A} gives a lower bound of $12$ on the rank of $\Tcal$. (Later, we will see that this bound holds with equality.) The lower bound is obtained via a study of a linear space of matrices, i.e. an order three tensor.  This is a better bound than can be obtained by using the substitution method to get an order three tensor from $\Tcal$ via the subtraction of slices.

\begin{proposition}
\label{prop:sub_compare} 
Using the substitution method to reduce $\Tcal = x^4-3y^4+ 12x^2yz+12xy^2w$ to an order three tensor gives, at best, the lower bound $\rk \Tcal \geq 11$.
\end{proposition}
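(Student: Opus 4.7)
The plan is to bound the best lower bound provable by iterated substitution method (Theorem~\ref{thm:substitution}) when the reduction terminates at an order three tensor. Each substitution reduces one dimension of $\Tcal$ by one and contributes $+1$ to the rank lower bound. Since $\Tcal$ has format $4\times 4\times 4\times 4$, at least $k=3$ substitutions are needed to collapse one mode to size~$1$; the resulting provable bound has the form $\rk\Tcal \geq \rk(P) + k$, where $P$ is the order three tensor one is left with. I will show $\rk(P)+k\leq 11$ for every such reduction.

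For the minimal case $k=3$ with all substitutions in one direction (say direction $4$), the resulting tensor $P$ is a linear combination of the four direction-$4$ slices $S_1 = x^3 + 6xyz + 3y^2 w$, $S_2 = -3y^3 + 3x^2 z + 6xyw$, $S_3 = 3x^2 y$, and $S_4 = 3xy^2$ of $\Tcal$, so $P$ lies in the slice space $\Lcal_3 = \langle S_1,S_2,S_3,S_4\rangle$. To see that the value $11$ is actually approached, I exhibit for $P=S_1$ an explicit rank-$8$ decomposition: one summand from $x^3$, four from $6xyz = \tfrac{1}{4}[(x+y+z)^3-(x+y-z)^3-(x-y+z)^3+(x-y-z)^3]$, and three from $3y^2w = \tfrac{1}{2}(y+w)^3-\tfrac{1}{2}(y-w)^3-w^3$.

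For an arbitrary $P\in\Lcal_3$, I write $P=\lambda_1 S_1+\lambda_2 S_2+\lambda_3 S_3+\lambda_4 S_4$ and collect terms by $y$-degree to obtain $P = x^2 L_0 + 3xyL_1 + 3y^2L_2 - 3\lambda_2 y^3$, where $L_0 = \lambda_1 x + 3\lambda_2 z$, $L_1 = \lambda_3 x + 2\lambda_1 z + 2\lambda_2 w$, and $L_2 = \lambda_4 x + \lambda_1 w$. A term-by-term decomposition only gives $\rk(P)\leq 11$ (using that $x^2L_0$ and $y^2L_2$ each have symmetric rank at most $3$, and $xyL_1$ has symmetric rank at most $4$). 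The main obstacle is to refine this to the uniform bound $\rk(P)\leq 8$ by combining cubes involving $x$ or $y$ to cover several blocks simultaneously, in the spirit of the decomposition of $S_1$ above.

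Finally, for reductions with $k>3$ (substitutions distributed across more than one direction before order three is reached), the resulting tensor has at least one mode of size strictly less than~$4$. A dimension-count argument on these smaller tensors shows $\rk(P)+k\leq 11$ continues to hold, which together with the minimal case completes the verification that the substitution method cannot reach a lower bound exceeding~$11$.
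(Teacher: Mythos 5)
Your setup correctly identifies the certified bound as $\rk P + 3$ with $P$ a linear combination of the four slices in which one coefficient is forced to equal $1$, and your explicit rank-$8$ decomposition of $S_1 = x^3+6xyz+3y^2w$ is in effect the paper's bound $\rk\Tcal_x\leq 1+4+3=8$ made concrete. (Incidentally, your formula for the first slice is correct, whereas the paper's printed $\Tcal_x=x^3+6xyz+3y^2z$ contains a typo for $3y^2w$.)

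There is however a genuine gap, rooted in a misidentification of what needs to be bounded. Theorem~\ref{thm:substitution} is existential: the constants $c_i$ are not chosen by the user. Having placed slice $j$ first in the reduction, the user can only certify $\rk\Tcal \geq 3+\min_{a}\rk\bigl(\Tcal_j+\sum_{k\neq j}a_k\Tcal_k\bigr)$, and the best certified bound is the maximum of this over $j$. Since that minimum is at most $\rk\Tcal_j$ (take all $a_k=0$), it suffices to show $\rk\Tcal_j\leq 8$ for each of the four individual slices --- exactly what the paper does, via $\rk x^3=1$, $\srk(xyz)=4$, $\srk(y^2w)=3$ for $\Tcal_x$, similarly for $\Tcal_y$, and $\rk\Tcal_z=\rk\Tcal_w=3$. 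You instead attempt a uniform bound $\rk P\leq 8$ over all $P$ in the slice space $\Lcal_3$. This is strictly stronger than what is required (and it is not clear it even holds), and you do not complete it: after a term-by-term count yielding only $\rk P\leq 11$, you write that ``the main obstacle is to refine this to the uniform bound $\rk P\leq 8$'' without carrying out the refinement. The closing assertion about reductions with $k>3$ (substitutions spread across several modes) is likewise a statement of intent rather than a proof. The paper's route sidesteps the uniform-bound obstruction entirely by bounding only the ranks of the four individual slices.
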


\begin{proof}
In the substitution method, the order in which slices are subtracted does not impact the lower bound obtained. Hence we consider the minimum rank in a linear space of tensors spanned by  the $4$ slices of $\Tcal$. The slices are cubics proportional to
$$\Tcal_x = x^3+6xyz+3y^2z, \quad \Tcal_y = -y^3 + x^2 z + 2 xyw, \quad \Tcal_z = x^2 y, \quad \Tcal_w = xy^2.$$
In the linear space, the coefficient of one of the four slices must be $1$, see Theorem~\ref{thm:substitution}.
Hence the lower bound is at best $3+\max\{\rk\Tcal_x,\rk\Tcal_y,\rk\Tcal_z,\rk\Tcal_w\}$. 
We have $\srk xyz=4$ and $\srk y^2z =3$, so $\rk \Tcal_x \leq 3+4+1=8$. Similarly, $\rk \Tcal_y \leq8$. Moreover $\rk \Tcal_z =\rk \Tcal_w =~3$. Hence the lower bound we obtain is at best $3 + 8 = 11$.
\end{proof}

\begin{remark}
We consider other ways to lower bound $\rk \Tcal$ for the tensor $\Tcal$ in~\eqref{eqn:quartic_ex}. The highest rank unfolding corresponds to the partition $\{ 1 , 2 \} \cup \{ 3 \} \cup \{4 \}$. Its rank is $\drk_2 \Tcal$, which is $9$ by Proposition~\ref{prop:drk9}. The lower bound from~\cite[Theorem 1.3]{landsberg2010ranks} is, in the notation of~\cite{landsberg2010ranks}, at best $\phi_{2,2} + \dim\Sigma_s + 1 = 6 + 1 + 1 = 8$.
\end{remark}

\section{Constructing tensors whose rank and symmetric rank differ}
\label{sec:idea} 

A real counterexample to Comon's conjecture over the real numbers is a real tensor whose (real) rank and symmetric rank differ. The only previously known example is from~\cite{shitov2020comon}. In this section, we organise the results of~\cite{shitov2020comon} into three steps
\begin{enumerate}
    \item[Step 1.] Find $\Ccal \in (\RR^I)^{\otimes d}$ symmetric and $\Mcal \subset  (\RR^I)^{\otimes (d-1)}$
    a finite set of symmetric tensors with 
    \begin{equation}
        \label{eqn:strict_minrk}
        \minrk (\Ccal \Mod \Mcal) < \minsrk (\Ccal \Mod \Mcal).
    \end{equation}
    \item[Step 2.] Modify $\Ccal$ and $\Mcal$ so that~\eqref{eqn:strict_minrk} still holds and $\Mcal$ consists of decomposable tensors
\item[Step 3.] Prove Conjecture~\ref{conj:modspan} for $\SAdj(\Ccal, \Mcal)$.
\end{enumerate}
If these three steps hold, then $\Tcal := \SAdj(\Ccal,\Mcal)$ has $$ \rk \Tcal = \minrk (\Ccal \Mod \Mcal) + d k < \minsrk (\Ccal \Mod \Mcal) + d k = \srk \Tcal,$$ where $k = \dim \Span \Mcal$ and the first equality is from Corollary~\ref{cor:adoined_substitution}. We use the results of~\cite{shitov2020comon} to show that the three steps hold on a family of examples. We prove accompanying results to highlight the importance of the choices made in the construction.

\subsection{Step 1} 

We saw an example of a symmetric tensor $\Ccal \in (\RR^2)^{\otimes 4}$ and finite set of symmetric tensors $\Mcal \subset  (\RR^2)^{\otimes 3}$ with $\minrk (\Ccal \Mod \Mcal) < \minsrk (\Ccal \Mod \Mcal)$ in Proposition~\ref{prop:shitov_idea}, namely $\Ccal = x^4 - 3y^4$ and $\Mcal = \{ x^2y, xy^2\}$. 
For this $\Ccal$ and $\Mcal$,
\begin{equation}
\label{eqn:sadj_ex}
    \Tcal := \SAdj (\Ccal, \Mcal) = x^4-3y^4+ 12x^2yz+12xy^2w 
\end{equation}
is the polynomial from~\eqref{eqn:quartic_ex}.
Since $\minrk (\Ccal \Mod \Mcal)$ and $\minsrk (\Ccal \Mod \Mcal)$ differ, Corollary~\ref{cor:adoined_substitution} and Conjecture~\ref{conj:modspan} give different lower bounds on the rank and symmetric rank of $\Tcal$. Corollary~\ref{cor:adoined_substitution} gives $\rk \Tcal \geq 9$ and Conjecture~\ref{conj:modspan} gives $\srk \Tcal \geq 10$.
However, neither lower bound holds with equality and $\Tcal$ is not a tensor whose rank and symmetric rank differ.

\begin{proposition}
\label{prop:eq12}
Fix $\Tcal = x^4-3y^4+ 12x^2yz+12xy^2w$. Then $\rk \Tcal = \srk \Tcal = 12$.
\end{proposition}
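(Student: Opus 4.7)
The lower bound $\rk \Tcal \geq 12$ is already Corollary~\ref{cor:geq12}, and since $\Tcal$ is symmetric, $\srk \Tcal \geq \rk \Tcal \geq 12$ as well. (Alternatively, Theorem~\ref{thm:symmetric_sylv} applied with $j = 2$, together with $\sdrk_2 \Tcal = 9$ from Proposition~\ref{prop:drk9} and $\rk \Tcal^{(2)} = \dim \Lcal_2 = 6$, yields $\srk \Tcal \geq 9 + 9 - 6 = 12$ directly.) Hence it suffices to produce the matching upper bound.

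The plan is to exhibit an explicit real symmetric decomposition $\Tcal = \sum_{i=1}^{12} \lambda_i \ell_i^4$ with $\lambda_i \in \RR$ and linear forms $\ell_i \in \RR[x,y,z,w]_1$, since such a decomposition certifies simultaneously $\rk \Tcal \leq 12$ and $\srk \Tcal \leq 12$. A natural first step uses the polarization-style identities
\begin{align*}
\tfrac{1}{4}\sum_{\epsilon_1,\epsilon_2\in\{\pm1\}} \epsilon_1 \epsilon_2\, (x + \epsilon_1 y + \epsilon_2 z)^4 &= 12\, x^2 yz + 4 y^3 z + 4 y z^3, \\
\tfrac{1}{4}\sum_{\epsilon_1,\epsilon_2\in\{\pm1\}} \epsilon_2\, (x + \epsilon_1 y + \epsilon_2 w)^4 &= 12\, x y^2 w + 4 x^3 w + 4 x w^3,
\end{align*}
which realise the two cross terms of $\Tcal$ using $8$ rank-one symmetric summands, leaving the bigraded residual
\[
R(x,y,z,w) \;:=\; x^4 - 3 y^4 - 4(y^3 z + y z^3) - 4(x^3 w + x w^3)
\]
to be decomposed into $4$ further rank-one symmetric terms.

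The main obstacle is the second step: expressing $R$ as $\sum_{i=1}^4 \mu_i (a_i x + b_i y + c_i z + d_i w)^4$. The naive bound is $14$, not $12$: $R$ splits into an $\RR[x,w]$-piece and an $\RR[y,z]$-piece, each of which is easily expressed with $3$ rank-one terms (for instance $-\tfrac12[(y{+}z)^4 - (y{-}z)^4] - 3y^4$ for the $(y,z)$-piece, and analogously for $(x,w)$), giving $6$ summands total. Saving two terms therefore requires at least one linear form that genuinely mixes the two variable groups, chosen so that all cross monomials produced are cancelled by the other summands. Because the lower bound of Theorem~\ref{thm:symmetric_sylv} is tight for $\Tcal$, such a 4-term real representation of $R$ must exist; finding it explicitly amounts to solving a structured polynomial system in the $20$ parameters $\mu_i, a_i, b_i, c_i, d_i$ matching the coefficients of the $35$ degree-$4$ monomials in four variables. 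Once such a $4$-tuple is produced and the equation $\Tcal = (\text{8 polarization terms}) + (\text{4 residual terms})$ is verified, the upper bound $\srk \Tcal \leq 12$ follows, and combining with the lower bound completes the proof that $\rk \Tcal = \srk \Tcal = 12$.
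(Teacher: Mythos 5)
Your lower bound is fine: Corollary~\ref{cor:geq12} gives $\rk \Tcal \geq 12$, and $\srk \Tcal \geq \rk \Tcal$ (or the direct route via Theorem~\ref{thm:symmetric_sylv} and Proposition~\ref{prop:drk9}). The problem is the upper bound, and it is not merely an omitted computation: the completion you ask for does not exist. After subtracting your eight polarization terms, the residual is
\[
R \;=\; x^4 - 4x^3w - 4xw^3 \;-\; 3y^4 - 4y^3z - 4yz^3,
\]
a sum of two binary quartics in the disjoint variable pairs $(x,w)$ and $(y,z)$. Its space of second-order partial derivatives splits accordingly: the $(x,w)$-derivatives span $\langle x^2, xw, w^2\rangle$ and the $(y,z)$-derivatives span $\langle y^2, yz, z^2\rangle$, so the middle flattening $R^{(2)}$ has rank $6$. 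Since the rank of a flattening lower bounds the rank (symmetric or not, real or complex), $R$ cannot be written as a sum of four rank-one terms; at least six are needed. Hence the ``$8$ polarization terms $+\,4$ residual terms'' plan is unachievable, and the $20$-parameter polynomial system you propose to solve has no solution. The justification offered for existence --- that the lower bound of Theorem~\ref{thm:symmetric_sylv} is tight for $\Tcal$, so a $4$-term representation of $R$ must exist --- is also circular: tightness, i.e.\ $\srk \Tcal \leq 12$, is exactly what is being proved; and even granting $\srk \Tcal = 12$, a minimal decomposition of $\Tcal$ need not contain your eight chosen summands, so nothing about this particular residual would follow.

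For comparison, the paper's upper bound uses a different grouping of the terms of $\Tcal$: write $\Tcal = f + g$ with $f = x^4 + 12x^2yz$ and $g = -3y^4 + 12xy^2w$. Since $g(y,x,-3z) = -3f(x,y,z)$, the two ternary quartics have the same real symmetric rank, so it suffices to show $\srk f \leq 6$; this is done by exhibiting an explicit real decomposition of $f$ into six fourth powers of linear forms, found via the apolarity lemma by restricting to point sets lying on $(y-az)(y+az) \in f^{\perp}$. This yields $\srk \Tcal \leq 12$ and hence equality throughout. If you want to salvage your strategy, you need a splitting of $\Tcal$ whose pieces genuinely have small rank (as the paper's $f$ and $g$ do), rather than isolating the two cross terms by polarization, which forces the high-flattening-rank residual $R$ above.
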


\begin{proof}
Corollary~\ref{cor:geq12} showed $\rk \Tcal \geq 12$. Here we show that $\srk \Tcal \leq 12$, using the Apolarity Lemma, see e.g. \cite[Lemma 2.1]{carlini2017real}  or \cite[Lemma 1.15]{iarrobino1999power}.
We examine the structure of the apolar ideal of $\Tcal$ to impose structure on a possible rank 12 decomposition. This reduces the number of parameters in the decomposition, making it feasible to find a solution.

The two polynomials $f(x,y,z):= x^4+ 12x^2yz$ and $g(x,y,w):= -3y^4+ 12xy^2w$ have the same symmetric rank, since $g(y,x,-3z) = -3f(x,y,z)$. Since $\Tcal = f + g$, it suffices to show that $\srk f \leq 6$.
By the apolarity lemma, we seek vanishing ideals of points that are contained in 
the apolar ideal
$$ f^\perp =\langle x^5,y^2,z^2,x^3- xyz,x^3y,x^3z\rangle. $$
Since $y^2$ and $z^2$ are contained in $f^\perp$, we have
$ y^2 - a^2 z^2 = (y - az) (y + az) \in f^\perp$ for all constants $a$.
We restrict our attention to ideals of points that are contained in $y^2 - a^2 z^2$ for fixed~$a$. That is, we look for a decomposition $f = \sum_{i=1}^6 \lambda_i \ell_i^4$, where $\ell_i = b_i x \pm a y + z$. 
We equate coefficients of $f$ and the decomposition
\begin{equation}
\label{eqn:6decmp}
 f \quad = \quad \sum_{i=1}^3 \lambda_i (b_i x + a y + z)^4 \quad + \quad \sum_{i=4}^6 \lambda_i (b_i x - a y + z)^4
 \end{equation}
and set $(b_1,b_2,b_4,b_5) =(1,2,1,3)$. The system of equations can then be solved in mathematica or Macaulay2 to give $a = -3$ and the rank six decomposition
\begin{align}
\label{eqn:ranksix}
\begin{split}
f \quad = \quad & \frac{1}{24}\left(x-3y+z\right)^4-\frac{1}{30}\left(2x-3y+z\right)^4-\frac{1}{120}\left(-3x-3y+z\right)^4
\\
& -\frac{1}{60}\left(x+3y+z\right)^4+\frac{1}{84}\left(3x+3y+z\right)^4+\frac{1}{210}\left(-4x+3y+z\right)^4.
\end{split}
\end{align}
When looking for a general rank six decomposition, rather than one of the restricted form~\eqref{eqn:6decmp}, our computation did not terminate. 
 \end{proof}

\subsection{Step 2}
\label{aj_rankone}

We seek to modify $\Ccal$ and $\Mcal$ so that the lower bounds from Corollary ~\ref{cor:adoined_substitution} and Conjecture~\ref{conj:modspan} hold with equality. Equality holds (or is conjectured to hold) when the adjoined tensors are decomposable.
A first approach is therefore to replace $\Mcal$ by symmetric rank one tensors that span $\Mcal$. We show that such an approach breaks the strict inequality~\eqref{eqn:strict_minrk}.

\begin{proposition}
\label{prop:rank_one_slices}
Let $\Ccal = x^4 - 3 y^4$ and let $\Wcal$ be a finite set of symmetric decomposable tensors that spans $\Mcal = \{ x^2y, xy^2\}$. Then $\minrk \Ccal \Mod \Wcal = 0$.
\end{proposition}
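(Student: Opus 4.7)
The plan is to exhibit the zero tensor as an element of $\Ccal \Mod \Wcal$, which immediately forces $\minrk (\Ccal \Mod \Wcal) = 0$. Among tensors in $\Ccal \Mod \Wcal$, I consider those obtained by taking the same cubic $A \in \Span \Wcal$ on every $1$ slice (across all four directions) and the same cubic $B \in \Span \Wcal$ on every $2$ slice. A short check with symmetric tensors shows that the resulting polynomial is $\Ccal + 4xA + 4yB$. So reaching the zero tensor reduces to solving $xA + yB = -\tfrac14 x^4 + \tfrac34 y^4$, and the obvious candidate $A = -\tfrac14 x^3$, $B = \tfrac34 y^3$ works provided $x^3, y^3 \in \Span \Wcal$.

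The main step is therefore to prove that $x^3$ and $y^3$ both lie in $V := \Span \Wcal$, regarded as a subspace of the four-dimensional space $\RR[x,y]_3$ of binary cubics. Since $V \supseteq \langle x^2 y, xy^2 \rangle$, we have $\dim V \in \{2,3,4\}$, and I will rule out the cases $\dim V = 2$ and $\dim V = 3$. If $\dim V = 2$ then $V = \langle x^2 y, xy^2 \rangle$, but expanding $(\lambda x + \mu y)^3 = \lambda^3 x^3 + 3\lambda^2 \mu x^2 y + 3\lambda \mu^2 xy^2 + \mu^3 y^3$ shows that the only rank-one cube in this subspace is the zero cube. This contradicts $V$ being spanned by nonzero rank-one tensors from $\Wcal$.

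If $\dim V = 3$ then $V$ is a hyperplane of binary cubics containing $\langle x^2y, xy^2 \rangle$, hence cut out by a linear equation on the coefficients of $x^3$ and $y^3$ of the form $\alpha \cdot (\text{coeff of } x^3) + \beta \cdot (\text{coeff of } y^3) = 0$ for some $(\alpha,\beta) \neq (0,0)$. A rank-one cube $(\lambda x + \mu y)^3$ lies in $V$ iff $\alpha \lambda^3 + \beta \mu^3 = 0$. Over $\RR$, uniqueness of the real cube root pins down the ratio $\lambda : \mu$ to a single value, so the rank-one cubes in $V$ span only a one-dimensional subspace, which cannot equal the three-dimensional $V$. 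Hence $\dim V = 4$ and $x^3, y^3 \in V$, completing the proof.

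The main obstacle is this last, dimension-three case, and specifically the appeal to the real structure: over $\mathbb{C}$ the equation $\alpha \lambda^3 + \beta \mu^3 = 0$ admits three distinct complex ratios $\lambda : \mu$, whose cubes already span the hyperplane, so the proposition fails. This explains why the naive approach to Step~2 of the framework---replace $\Mcal$ by a rank-one spanning set of cubes---destroys the strict inequality needed from Step~1, and motivates the more delicate constructions discussed in the following subsections.
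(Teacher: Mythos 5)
Your proof is correct, and it reaches the key fact by a genuinely different route than the paper. Both arguments reduce the proposition to showing that $\Span \Wcal$ is all of the four-dimensional space of binary cubics: the paper says that then ``any slice of $\Ccal$ is in $\Span\Wcal$'', while you make the same reduction concrete by adding $A=-\tfrac14 x^3$ to every $1$ slice and $B=\tfrac34 y^3$ to every $2$ slice, so that the (symmetric) result has polynomial $\Ccal+4xA+4yB=0$. The difference is in ruling out $\dim \Span\Wcal \leq 3$. The paper first gets $\dim \geq 3$ from $\srk(x^2y)=3$, and then excludes dimension $3$ via Lemma~\ref{lem:decomp_mu}: three cubes spanning the space would have to be the rank-one terms of rank-three decompositions of both $x^2y$ and $xy^2$, forcing $\sum_i \la_i/\mu_i=\sum_i \mu_i/\la_i=0$ and the impossible real identity $2+t+t^{-1}=1$. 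You instead argue directly with the geometry of cubes inside a hyperplane: any hyperplane containing $\langle x^2y, xy^2\rangle$ is cut out by $\alpha\cdot\mathrm{coeff}(x^3)+\beta\cdot\mathrm{coeff}(y^3)=0$, and since a real number has a unique real cube root, the cubes $(\lambda x+\mu y)^3$ it contains all have a single direction, so they span at most a line and cannot span $\Wcal$; your two-dimensional case is the degenerate instance of the same observation. Your route is more elementary and self-contained (it avoids Lemma~\ref{lem:decomp_mu} altogether), whereas the paper's reuses a structural lemma it needs elsewhere; both exploit the real field at exactly one point, via the unique real cube root in your case and via $t+t^{-1}\notin(-2,2)$ in the paper's.

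One caveat on your closing aside: over $\mathbb{C}$ what demonstrably fails is your argument (a hyperplane is spanned by the cubes of the three complex cube roots), not necessarily the conclusion $\minrk(\Ccal\Mod\Wcal)=0$ itself; asserting that the proposition fails over $\mathbb{C}$ would require exhibiting a complex $\Wcal$ for which the zero tensor is genuinely not in $\Ccal\Mod\Wcal$, which you do not check. This does not affect the correctness of your proof of the real statement.
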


\begin{proof}
To show that the zero tensor is in $\Ccal \Mod \Wcal$, it is enough to show that $\Wcal$ spans $\mathcal{K} = \{ x^3, x^2y, xy^2,  y^3 \}$, since then any slice of $\Ccal$ is in $\Span \Wcal$. If $\dim \Wcal =4$, then $\Wcal$ spans $\mathcal{K}$. It therefore suffices to rule out the possibility that $\dim \Wcal \leq 3$. 

Suppose for contradiction that $\dim \Wcal \leq 3$. 
Since $x^2y$ has rank $3$, 
we have $\dim \Wcal = 3$.
Then $(\la_1 x + \mu_1 y)^3, (\la_2 x + \mu_2 y)^3, (\la_3 x + \mu_3 y)^3$ are a basis for $\Wcal$. They must be the rank one terms in a decomposition for both $x^2y$ and  $xy^2$. By Lemma \ref{lem:decomp_mu}, $\la_1,\la_2,\la_3,\mu_1,\mu_2,\mu_3$ are non-zero and $\frac{\la_1}{\mu_1}+\frac{\la_2}{\mu_2}+\frac{\la_3}{\mu_3}=0, \frac{\mu_1}{\la_1}+\frac{\mu_2}{\la_2}+\frac{\mu_3}{\la_3}=0.$ Then $1=\frac{\mu_1}{\la_1}\frac{\la_1}{\mu_1}=(\frac{\la_2}{\mu_2}+\frac{\la_3}{\mu_3})(\frac{\mu_2}{\la_2}+\frac{\mu_3}{\la_3})=
2+ t + t^{-1}$, where $t =
\frac{\la_2 \mu_3}{\la_3 \mu_2}$. This function is either at least $4$ or at most $0$ so can never be $1$, the desired contradiction.
\end{proof}

The set $\Wcal$ from Proposition~\ref{prop:rank_one_slices} results in $\minrk (\Ccal \Mod \Wcal) = \minsrk (\Ccal \Mod \Wcal)$, cf. Proposition~\ref{prop:minrkA}. We need a different way to replace $\Mcal$ with decomposable tensors, in order to preserve the strict inequality in~\eqref{eqn:strict_minrk}.

\begin{definition}[{See \cite[Definition 6.3]{shitov2020comon} and \cite[Notation 1.1]{Shitov2019CounterexamplesTS}}]
Fix a binary tensor $\Tcal \in (\RR^2)^{\otimes d}$. 
Let  $E := \{1, \ldots, n\}$ and $\mathcal{E} := \{ n+1, \ldots, 2n\}$. 
The $n$ {\em clone} of $\Tcal$, denoted $\Tcal_c$, is the tensor in $(\RR^{2n})^{\otimes d} = (\RR^{E \cup \mathcal{E}})^{\otimes d}$ with entries
$$ \Tcal_c (k_1 | \cdots | k_d) = 
\Tcal( h_1 | \cdots | h_d), \quad
\text{where} \quad 
 h_i = 
\begin{cases}1 & k_i \in E \\
2 & k_i \in \mathcal{E}. \end{cases} $$
For $\Mcal \subset (\RR^2)^{\otimes d}$ we denote by $\Mcal_c \subset (\RR^{2n})^{\otimes d}$ the set of $n$ clones of each tensor in $\Mcal$.
\end{definition}

\begin{example}
The $2$ clone of the matrix
$$ \begin{pmatrix} 1 & 0 \\ 0 & 1 \end{pmatrix} \quad \text{ is } \quad
\begin{pmatrix} 1 & 1 & 0 & 0 \\ 1 & 1 & 0 & 0 \\ 0 & 0 & 1 & 1 \\ 0 & 0 & 1 & 1 \end{pmatrix}. $$
\end{example}

\begin{definition}[{See~\cite[Remark 6.6]{shitov2020comon}}]
Given $\Tcal \in (\RR^{E \cup \mathcal{E}})^{\otimes d}$, let $\Tcal_E \in (\RR^E)^{\otimes d}$ be its restriction to index set $E$. Similarly, for $\Wcal \subset (\RR^{E 
\cup \mathcal{E}})^{\otimes d}$, let $\Wcal_E$ denote the restriction of each tensor in $\Wcal$ to index set $E$. 
Denote the tensor in  $(\RR^{E})^{\otimes d}$ with all entries equal to $1$ 
by $\mathbb{I}(E,d)$.
For the set $\mathcal{E}$, define $\Tcal_\mathcal{E}$, $\Wcal_\mathcal{E}$, and $\mathbb{I}(\mathcal{E},d)$ similarly. 
\end{definition}

The following result gives conditions on the set of decomposable tensors $\Wcal$ such that the strict inequality~\eqref{eqn:strict_minrk} is preserved. It is extracted from~\cite{shitov2020comon}, in particular~\cite[Lemmas 6.5 and 8.14]{shitov2020comon}.
The numbering of conditions comes from~\cite[Definition 6.7]{shitov2020comon}.

\begin{proposition}
\label{prop:shitov1}
Fix $\Ccal = x^d - 3 y^d$ and $\Mcal=\{ x^{d-2}y, \ldots, xy^{d-2} \}$ for $d \geq 4$ even. 
Let $\Wcal \subset (\RR^{E \cup \mathcal{E}})^{\otimes (d-1)}$ be such that
\begin{itemize}
    \item[(3)] $\Span \Wcal$ contains the $n$ clone of every tensor in $\Mcal$
    \item[(4e)]  $\mathbb{I}(E,d)$ is the only decomposable tensor in $\mathbb{I}(E,d) \Mod \Wcal_E$
    \item[(4$\epsilon$)] 
     $\mathbb{I}(\mathcal{E},d)$ is the only decomposable tensor in $\mathbb{I}(\mathcal{E},d) \Mod \Wcal_\mathcal{E}$.
\end{itemize}
Then $\minrk (\Ccal_c \Mod \Wcal) < \minsrk (\Ccal_c \Mod \Wcal)$.
\end{proposition}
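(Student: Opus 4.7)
The plan is to establish the strict inequality via the sandwich $\minrk(\Ccal_c \Mod \Wcal) \leq 1 < 2 \leq \minsrk(\Ccal_c \Mod \Wcal)$. The two bounds are essentially independent, and each uses a different subset of the three hypotheses.

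For the upper bound, I would start from Remark~\ref{rem:CmodM}, which yields a rank one tensor $\Ucal \in \Ccal \Mod \Mcal$ in $(\RR^2)^{\otimes d}$. The $n$-clone operation is linear and acts factorwise: it sends each $v \in \RR^2$ to the vector in $\RR^{E \cup \mathcal{E}}$ that is constantly $v(1)$ on $E$ and constantly $v(2)$ on $\mathcal{E}$. Hence cloning preserves rank one tensors. A direct check that cloning commutes with the $\Mod$ operation (by cloning a slice decomposition of $\Ucal$ into a slice decomposition of $\Ucal_c$) places $\Ucal_c$ in $\Ccal_c \Mod \Mcal_c$. Condition~(3) then gives $\Mcal_c \subset \Span \Wcal$, so $\Ccal_c \Mod \Mcal_c \subseteq \Ccal_c \Mod \Wcal$, yielding $\minrk(\Ccal_c \Mod \Wcal) \leq 1$.

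For the lower bound, I would assume for contradiction that some symmetric decomposable tensor $\Tcal = v^{\otimes d}$ lies in $\Ccal_c \Mod \Wcal$, and decompose $v = v_E + v_\mathcal{E}$ along the splitting of the index set. Restricting the defining membership to indices in $E$ turns $\Ccal_c$ into $\mathbb{I}(E,d)$ and sends elements of $\Span \Wcal$ to $\Span \Wcal_E$, so $\Tcal_E = v_E^{\otimes d}$ lies in $\mathbb{I}(E,d) \Mod \Wcal_E$. This is a decomposable tensor in that space, so hypothesis~(4e) forces $v_E^{\otimes d} = \mathbb{I}(E,d)$. Repeating the argument on $\mathcal{E}$, where $\Ccal_c$ restricts to $-3\,\mathbb{I}(\mathcal{E},d)$, and rescaling by $-1/3$ to apply~(4$\epsilon$), gives $v_\mathcal{E}^{\otimes d} = -3\,\mathbb{I}(\mathcal{E},d)$. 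Examining any diagonal entry then produces $v_\mathcal{E}(k)^d = -3$, which is impossible over $\RR$ for even $d$. This rules out symmetric decomposable tensors in $\Ccal_c \Mod \Wcal$, so $\minsrk(\Ccal_c \Mod \Wcal) \geq 2$.

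The bookkeeping (that restriction commutes with $\Mod$ and $\Span$, and that cloning commutes with $\Mod$ and preserves rank) is the bulk of the write-up, but is a straightforward unpacking of definitions and presents no real difficulty. The essential input is the parity of $d$, which kills the minus three exactly as in the proof of Proposition~\ref{prop:shitov_idea}. I do not anticipate any deeper obstacle, since the hypotheses~(4e) and~(4$\epsilon$) are tailored precisely to land each restriction on the right normalized space.
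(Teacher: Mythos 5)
Your argument follows the paper's proof in all essentials: the upper bound is exactly the paper's step (condition (3) together with the fact that cloning preserves rank one and commutes with $\Mod$, giving $\minrk(\Ccal_c \Mod \Wcal) \le \minrk(\Ccal_c \Mod \Mcal_c) = \minrk(\Ccal \Mod \Mcal) \le 1$), and the lower bound is the paper's restriction-to-diagonal-blocks argument via (4e) and (4$\epsilon$). Your version is even slightly leaner: the paper additionally invokes the structural fact that a decomposable tensor whose diagonal blocks are clones is itself a clone (citing a lemma of Shitov) before extracting the sign contradiction, whereas you read the contradiction directly off diagonal entries; and you do not need the paper's extra use of (4e) to rule out $\minrk = 0$, since $\minrk \le 1 < 2 \le \minsrk$ already gives strictness.

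There is, however, one genuine (though easily repaired) omission in your lower bound. Over $\RR$ with $d$ even, a nonzero symmetric decomposable tensor is $\lambda v^{\otimes d}$ with $\lambda$ possibly negative, and the sign cannot be absorbed into $v$; your normalisation $\Tcal = v^{\otimes d}$ therefore misses the case $\Tcal = -v^{\otimes d}$, for which your final equation becomes $-v_{\mathcal{E}}(k)^d = -3$ and yields no contradiction. The fix stays entirely within your argument: write $\Tcal = \lambda v^{\otimes d}$; then (4e) gives $\lambda v_E^{\otimes d} = \mathbb{I}(E,d)$, whose diagonal entries force $\lambda > 0$ (as $d$ is even), while (4$\epsilon$) gives $\lambda v_{\mathcal{E}}^{\otimes d} = -3\,\mathbb{I}(\mathcal{E},d)$, whose diagonal entries force $\lambda < 0$, the desired contradiction. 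This parametrisation also covers the zero tensor, so $\minsrk(\Ccal_c \Mod \Wcal) \ge 2$ follows in full.
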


\begin{proof}
We saw that $\minrk (\Ccal \Mod \Mcal) < \minsrk (\Ccal \Mod \Mcal)$ in Remark~\ref{rem:CmodM}.
Next we show that $\minrk (\Ccal_c \Mod \Wcal) = 1$, cf.~\cite[Proof of Lemma 6.1]{shitov2020comon}. 
By (3),
$$\minrk (\Ccal_c \Mod \Wcal) \leq \minrk (\Ccal_c \Mod \Mcal_c) . $$
Moreover, by the definition of cloning, $\minrk (\Ccal_c \Mod \Mcal_c)= \minrk (\Ccal \Mod \Mcal)=1$.
We can rule out $\minrk (\Ccal_c \Mod \Wcal) = 0$: this would imply that the zero tensor lies in $(\Ccal_c)_E \Mod \Wcal_E$, a contradiction to (4e), since $(\Ccal_c)_E = \mathbb{I}(E,d)$. 

Finally, we show that $\minsrk (\Ccal_c \Mod \Wcal) = 2$. Assume for contradiction that there is a decomposable symmetric $\Tcal$ in $\Ccal_c \Mod \Wcal$. Then
 $\Tcal_E \in (\Ccal_c)_E \Mod \Wcal_E$, and therefore $\Tcal_E \in \mathbb{I}(E,d) \Mod \Wcal_E$. 
 Hence $\Tcal_E = \mathbb{I}(E,d)$, by (4e).
 Similarly, 
 $\Tcal_\mathcal{E} \in (\Ccal_c)_\mathcal{E} \Mod \Wcal_\mathcal{E}$, i.e. $\Tcal_\mathcal{E} \in -3 \cdot \mathbb{I}(\mathcal{E},d) \Mod \Wcal_\mathcal{E}$. Hence $\Tcal_\mathcal{E} = -3 \cdot \mathbb{I}(\mathcal{E},d)$, by (4$\epsilon$).
Since both diagonal blocks of $\Tcal$ are clones, and $\Tcal$ is decomposable, the tensor $\Tcal$ must be a clone, i.e. $\Tcal = \Ucal_c$ for some decomposable $\Ucal \in (\RR^2)^{\otimes d}$, see~\cite[Lemma 6.5]{shitov2020comon}. The tensor $\Ucal$ has $\Ucal(1 | \cdots | 1) = 1$ and $\Ucal (2 | \cdots | 2) = -3$, hence $\Ucal$ is not decomposable, the desired contradiction. 
 \end{proof}

\subsection{Step 3}
We have seen conditions on a set $\Wcal$ to preserve the strict inequality in~\eqref{eqn:strict_minrk}.
We aim to use this strict inequality 
$\minrk (\Ccal_c \Mod \Wcal) < \minsrk (\Ccal_c \Mod \Wcal)$
to conclude a strict inequality between the rank and symmetric rank of $\SAdj(\Ccal_c, \Wcal)$. For this, we seek conditions for  Conjecture~\ref{conj:modspan}
to hold with equality.

\begin{proposition}
\label{prop:shitov2} 
Fix $\Ccal = x^d - 3 y^d$ and $\Mcal=\{ x^{d-2}y, \ldots, xy^{d-2} \}$ for $d \geq 4$ even.
Assume that $\Wcal$ is such that conditions (3),(4e), and (4$\epsilon$) from Proposition~\ref{prop:shitov1} hold. Moreover, assume that
\begin{itemize}
    \item[(2)] $\Wcal$ consists of decomposable tensors
    \item[(6)] Sets $u_{E}\otimes(\mathbb{R}^{E})^{\otimes (d-1)}$ and $\mathbb{I}(E,d)\Mod \mathcal{W}_E$ are disjoint for all 
    $u^{\otimes (d-1)} \in {\rm Span}\mathcal{W}$.
\end{itemize}
Then Conjecture~\ref{conj:modspan} holds for $\SAdj(\Ccal_c, \Wcal)$.
\end{proposition}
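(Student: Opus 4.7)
The plan is to prove the lower bound direction of Conjecture~\ref{conj:modspan} for $\SAdj(\Ccal_c, \Wcal)$; the matching upper bound follows from Proposition~\ref{prop:conj_lower} using condition~(2). Set $k := \dim \Span \Wcal$ and take any symmetric decomposition $\SAdj(\Ccal_c, \Wcal) = \sum_{i=1}^r v_i^{\otimes d}$. The goal is to show $r \geq \minsrk(\Ccal_c \Mod \Wcal) + dk$.

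First I would split $v_i = u_i + \alpha_i$ with $u_i \in \RR^{E\cup\mathcal{E}}$ and $\alpha_i \in \RR^W$, and expand $v_i^{\otimes d}$. Since $\SAdj(\Ccal_c, \Wcal)$ equals $\Ccal_c$ on the $(E\cup\mathcal{E})^d$ block, recovers the symmetric tensors of $\Wcal$ on the $1$-$W$-index blocks, and vanishes on all other blocks, equating coefficients block by block gives the constraints
\begin{align*}
\Ccal_c &= \sum_i u_i^{\otimes d}, \\
\Wcal^{(w)} &= \sum_i \alpha_i(w)\, u_i^{\otimes(d-1)} \quad \text{for each } w \in W, \\
0 &= \sum_i \alpha_i^{\otimes s} \otimes u_i^{\otimes(d-s)} \quad \text{for all } s \geq 2.
\end{align*}
By condition~(2) I can then fix a basis $\{\eta_j^{\otimes(d-1)}\}_{j=1}^{k}$ of $\Span\Wcal$ drawn from the rank-one tensors of $\Wcal$.

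Reversing the reorganization used in the proof of Proposition~\ref{prop:conj_lower}, I would rewrite the decomposition in the form
\[ \SAdj(\Ccal_c, \Wcal) = \Tcal + \sum_{j=1}^k \ell_{\eta_j}^{d-1}\ell_{y_j}, \]
where each $\ell_{y_j} \in \RR^W$ is a linear form dual to $\eta_j^{\otimes(d-1)}$ within the chosen basis, and $\Tcal$ is a symmetric tensor whose restriction (after padding) to $(E\cup\mathcal{E})^d$ lies in $\Ccal_c \Mod \Wcal$. Each symmetrized term $\ell_{\eta_j}^{d-1}\ell_{y_j}$ has symmetric rank exactly $d$ by Lemma~\ref{lem:decomp_mu}, while $\srk \Tcal \geq \minsrk(\Ccal_c \Mod \Wcal)$ by definition. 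Summing, one obtains $r \geq dk + \minsrk(\Ccal_c \Mod \Wcal)$, provided that no rank-one symmetric component is shared or absorbed between $\Tcal$ and the $\ell_{\eta_j}^{d-1}\ell_{y_j}$ pieces.

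The main obstacle, flagged in Remark~\ref{rmk:conj_more}, is precisely ruling out such sharing. This is where conditions~(4e),~(4$\epsilon$), and~(6) enter. Conditions~(4e) and~(4$\epsilon$) rigidify how any symmetric decomposition of a tensor in $\Ccal_c \Mod \Wcal$ can project to the $E^d$ and $\mathcal{E}^d$ blocks, preventing the summands from hiding inside those diagonal blocks. Condition~(6) directly forbids any tensor of the form $u_E \otimes T$ with $u^{\otimes(d-1)} \in \Span\Wcal$ from belonging to $\mathbb{I}(E,d) \Mod \Wcal_E$, so if a rank-one summand of $\Tcal$'s decomposition coincided with a summand of some $\ell_{\eta_j}^{d-1}\ell_{y_j}$, restricting to $E^d$ would exhibit exactly such a forbidden tensor, a contradiction. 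The delicate sub-case is when some $\alpha_i$ has support on several $W$-coordinates, so a single $v_i^{\otimes d}$ contributes to multiple adjoined slices at once; here the higher-order vanishing constraints $\sum_i \alpha_i^{\otimes s} \otimes u_i^{\otimes(d-s)} = 0$ for $s \geq 2$ force these mixed contributions to decouple along the basis $\{\eta_j\}$, enabling the required disentanglement and completing the lower bound.
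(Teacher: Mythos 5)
Your outline correctly identifies both the shape of the problem (the upper bound is Proposition~\ref{prop:conj_lower}; the work is in the lower bound) and the exact place where the difficulty lives (ruling out shared rank-one summands between $\Tcal$ and the adjoined pieces). However, the crucial middle of the argument is not actually proved, and one step as written is logically backwards.

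The step that fails is the inference ``each $\ell_{\eta_j}^{d-1}\ell_{y_j}$ has symmetric rank $d$, $\srk\Tcal \geq \minsrk(\Ccal_c \Mod \Wcal)$, summing gives $r \geq dk + \minsrk$.'' Symmetric rank is subadditive in the wrong direction: writing $\SAdj(\Ccal_c,\Wcal) = \Tcal + \sum_j \ell_{\eta_j}^{d-1}\ell_{y_j}$ gives the \emph{upper} bound $\srk \SAdj \leq \srk\Tcal + dk$, which is again Proposition~\ref{prop:conj_lower}. To get a lower bound on $r$ you must start from an \emph{arbitrary} rank-$r$ symmetric decomposition $\sum_i v_i^{\otimes d}$ and show that $r$ of its summands can be partitioned (or transformed) so that some of them account for the adjoined slices and the remainder furnish a symmetric tensor in $\Ccal_c\Mod\Wcal$. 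The phrase ``reversing the reorganization of Proposition~\ref{prop:conj_lower}'' suggests this can be done by inspection from the block constraints you wrote down, but it cannot: those constraints only say the $\Wcal^{(w)}$ lie in $\Span\{u_i^{\otimes(d-1)}\}$, which does not by itself split the $r$ rank-one terms into disjoint groups $\Tcal$-versus-$j$th-slice. Your appeal to conditions (4e), (4$\epsilon$), (6) and to the higher-order vanishing $\sum_i\alpha_i^{\otimes s}\otimes u_i^{\otimes(d-s)}=0$ is at the right level of intuition but is left entirely as a claim.

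What you are missing is that this step is precisely the content of Shitov's Procedure~8.6 and the surrounding Lemmas 8.14, 9.4 and Claim 9.3 of~\cite{shitov2020comon}, and it is a long multi-page argument that cannot be compressed to ``the constraints force decoupling.'' The paper's own proof does not re-derive this; it cites Procedure~8.6 as producing, from an assumed rank-$(dk+1)$ decomposition, a symmetric tensor $\Tcal\in\Ccal_c\Mod\Wcal$ that is moreover a clone, and then invokes Proposition~\ref{prop:shitov1} to contradict decomposability of $\Tcal$. The genuine contribution of the paper's Proposition~\ref{prop:shitov2} is not a new proof of the disentanglement step but rather a verification that the conditions (2), (3), (4e), (4$\epsilon$), (6) suffice to run Shitov's machinery: specifically, that (4e) implies Shitov's (5e), that (4$\epsilon$) implies (5$\epsilon$), and that the linear-independence condition (1) may be assumed without loss of generality. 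Your proposal does not engage with this reduction at all, so as written there is a substantial gap where the main technical work should be.
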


\begin{proof}
The upper bound $\srk \SAdj (\Ccal_c, \Wcal) \leq d \dim \Span \Wcal + 2$ is Proposition~\ref{prop:conj_lower}. We explain how the results of~\cite{shitov2020comon} give equality. Let $r = dk + 1$, where $k = \dim \Span \Wcal$ and assume for contradiction $\srk \SAdj(\Ccal_c,\Wcal) \leq r$. We transform the symmetric rank~$r$ decomposition into a decomposition of $r$ (possibly non-symmetric) rank one terms
\begin{equation}
    \label{eqn:shitov2_decomp}
    \SAdj(\Ccal_c, \Wcal) = \Tcal + \sum_{j=1}^d \sum_{w = 1}^{k} \Tcal_w^{(j)} ,
\end{equation} 
where $\Tcal \in (\RR^{E \cup \mathcal{E} \cup W})^{\otimes d}$ satisfies three conditions: (i) $\Tcal \in \Ccal_c \Mod \Wcal$ (in particular, $\Tcal$ is zero outside of the index set $E \cup \mathcal{E}$) (ii) $\Tcal$ is symmetric, and (iii) $\Tcal = \Ucal_c$ for some $\Ucal \in (\RR^2)^{\otimes d}$. 
Such a $\Tcal$ cannot be decomposable, by Proposition~\ref{prop:shitov1}, which contradicts $\srk \SAdj(\Ccal_c,\Wcal) \leq dk + 1$.

The procedure to build the new decomposition is~\cite[Procedure 8.6]{shitov2020comon}. The fact that Procedure 8.6 produces $\Tcal$ satisfying (i) and (iii) is the culmination of~\cite[Section 8]{shitov2020comon} in~\cite[Lemma 8.14]{shitov2020comon}. Part (ii) follows from~\cite[Claim 9.3 and Lemma 9.4]{shitov2020comon}.
It remains to show that~\cite[Claim 9.3]{shitov2020comon} works whenever
the conditions in our statement hold. 
In~\cite{shitov2020comon}, the author proves that Claim 9.3 holds for a {\em monomial emulator}~\cite[Definition 6.7]{shitov2020comon}, a finite set of tensors in $(\RR^{E \cup \mathcal{E}})^{\otimes d}$ that satisfies properties (2), (3), (4e), (4$\epsilon$), and (6), as well as
\begin{enumerate}
\item[(1)]
$\mathcal{W}$ is linearly independent,
\item[(5$e$)] $\mathbb{I}(E,d-1)$ is the only rank one tensor in $\mathbb{I}(E,d-1)+{\rm Span}\mathcal{W}_{E}$,
\item[(5$\epsilon$)] $\mathbb{I}(\mathcal{E},d-1)$ is the only rank one tensor in $\mathbb{I}(E,d-1)+{\rm Span}\mathcal{W_{E}}$,
\end{enumerate}
We show that $(5e)$ is implied by $(4e)$.
A decomposable $\Tcal \in \mathbb{I}(E,d-1)+{\rm Span}\mathcal{W}_E$ that is not equal to $\mathbb{I}(E,d-1)$ gives a decomposable tensor in $\mathbb{I}(E,d) \Mod \mathcal{W}_{E}$ that is not $\mathbb{I}(E,d)$ by setting each of the $|E|$ 1-slices equal to $\Tcal$. Similarly, $(4\epsilon)$ implies $(5\epsilon)$.

Finally, we can disregard property (1), as follows.
We can assume that $\Wcal$ consists of linearly independent tensors, by restricting to a linearly independent subset of $\Wcal$,  cf.~\cite[Observation 7.1]{shitov2020comon}. This does not affect the other properties (2)-(6).
\end{proof}

\begin{corollary}
\label{cor:counter_ex}
Fix $\Ccal = x^d - 3 y^d$ and $\Mcal=\{ x^{d-2}y, \ldots, xy^{d-2} \}$ for $d \geq 4$ even.
Let $\Wcal$ satisfy the conditions of Propositions~\ref{prop:shitov1} and~\ref{prop:shitov2}. Then $\SAdj (\Ccal_c, \Wcal)$ is a tensor whose rank and symmetric rank differ.
\end{corollary}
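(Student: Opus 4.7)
The plan is to assemble the three previous ingredients (Corollary~\ref{cor:adoined_substitution}, Proposition~\ref{prop:shitov1}, and Proposition~\ref{prop:shitov2}) into a direct chain of (in)equalities, with no extra work needed beyond book-keeping. Let $k := \dim \Span \Wcal$. The target tensor is $\Tcal := \SAdj(\Ccal_c, \Wcal)$, and I want to show $\rk \Tcal < \srk \Tcal$.

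First I would compute $\rk \Tcal$. Condition~(2) in Proposition~\ref{prop:shitov2} says $\Wcal$ consists of decomposable tensors, so $\Span \Wcal$ has a basis of decomposable tensors. Therefore the equality clause of Corollary~\ref{cor:adoined_substitution} applies and gives
\[ \rk \Tcal \;=\; \minrk (\Ccal_c \Mod \Wcal) \,+\, d k. \]
Next I would compute $\srk \Tcal$. The hypotheses of Proposition~\ref{prop:shitov2} are exactly satisfied by our $\Wcal$, so Conjecture~\ref{conj:modspan} holds for $\SAdj(\Ccal_c, \Wcal)$. Since $\Wcal$ consists of decomposable tensors, the equality clause of the conjecture gives
\[ \srk \Tcal \;=\; \minsrk (\Ccal_c \Mod \Wcal) \,+\, d k. \]

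Finally I would invoke Proposition~\ref{prop:shitov1}, whose hypotheses (3),~(4e),~(4$\epsilon$) are part of our standing assumptions, to obtain the strict inequality $\minrk (\Ccal_c \Mod \Wcal) < \minsrk (\Ccal_c \Mod \Wcal)$. Subtracting $dk$ from the two displayed equalities, this translates immediately into $\rk \Tcal < \srk \Tcal$, as desired.

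There is no genuine obstacle: all the difficult content has been front-loaded into the two propositions and the corollary. The only thing one must check is that the hypotheses listed for Propositions~\ref{prop:shitov1} and~\ref{prop:shitov2} truly suffice to trigger both equality clauses simultaneously, but this is clear since condition~(2) (decomposability of $\Wcal$) is exactly what powers the equality in both Corollary~\ref{cor:adoined_substitution} and in Conjecture~\ref{conj:modspan}. The remainder of the work in the paper will be to exhibit an explicit $\Wcal$ satisfying conditions (2),~(3),~(4e),~(4$\epsilon$),~(6) for some specific even $d$ (e.g.\ $d = 6$ for Theorem~\ref{thm:B}); Corollary~\ref{cor:counter_ex} itself is only the packaging step.
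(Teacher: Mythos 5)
Your proposal is correct and follows essentially the same route as the paper: the paper likewise applies the equality case of Corollary~\ref{cor:adoined_substitution} (using that $\Wcal$ is decomposable) to get $\rk \SAdj(\Ccal_c,\Wcal) = \minrk(\Ccal_c \Mod \Wcal) + dk$, and uses Propositions~\ref{prop:shitov1} and~\ref{prop:shitov2} to get $\srk \SAdj(\Ccal_c,\Wcal) = \minsrk(\Ccal_c \Mod \Wcal) + dk$ via Conjecture~\ref{conj:modspan}. The only cosmetic difference is that the paper substitutes the explicit values $\minrk = 1$ and $\minsrk = 2$ computed in the proof of Proposition~\ref{prop:shitov1}, whereas you keep the abstract strict inequality, which suffices equally well.
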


\begin{proof}
Corollary~\ref{cor:adoined_substitution} gives $\rk \SAdj (\Ccal_c, \Wcal) = d \dim \Span \Wcal + 1$, since
$\Wcal$ is a set of decomposable tensors. In comparison,
$\srk \SAdj (\Ccal_c, \Wcal) = d \dim \Span \Wcal + 2$, since 
$\minsrk (\Ccal_c \Mod \Wcal) = 2$ and
Conjecture~\ref{conj:modspan} holds for $\SAdj(\Ccal_c,\Wcal)$, by Propositions~\ref{prop:shitov1} and~\ref{prop:shitov2}. 
\end{proof}

\section{A counterexample of order 6}
\label{sec:order6} 

In this section we give an order $6$ counterexample to Comon's conjecture; i.e., 
we prove Theorem~\ref{thm:B}.
We define a set of symmetric order $5$ tensors that satisfies the conditions from Propositions~\ref{prop:shitov1} and~\ref{prop:shitov2}, namely:
\begin{itemize}
         \item[(2)] $\Wcal$ consists of decomposable tensors,
    \item[(3)] $\Span \Wcal$ contains the $n$ clone of every tensor in $\Mcal=\{ x^4 y, x^3 y^2, x^2 y^3, x y^4 \}$,
    \item[(4e)]  $\mathbb{I}(E,6)$ is the only decomposable tensor in $\mathbb{I}(E,6) \Mod \Wcal_E$,
    \item[(4$\epsilon$)] 
     $\mathbb{I}(\mathcal{E},6)$ is the only decomposable tensor in $\mathbb{I}(\mathcal{E},6) \Mod \Wcal_\mathcal{E}$,
    \item[(6)] Sets $u_{E}\otimes(\mathbb{R}^{E})^{\otimes 5}$ and $\mathbb{I}(E,6)\Mod \mathcal{W}_E$ are disjoint for all 
    $u^{\otimes 5} \in {\rm Span}\mathcal{W}$.
\end{itemize}

\begin{definition}[The set $\Wcal$]
\label{def:Wset}
For any $i\in \{1,...n\}$, let $\alpha_{i} \in \mathbb{R}^{2n}$ have $\alpha_i( 2i - 1) = \alpha_i (2i) = 1$, and all other entries zero.
Let $\Wcal_{1}$ be the set of tensors $u^{\otimes 5}$, where $u \in \RR^{4n}$ is one of
\begin{footnotesize}
$$ \begin{matrix}
(\al_{i_1}+\al_{i_{2}}+\al_{i_{3}}+\al_{i_{4}}|\,0)
 & 
(\al_{i_{1}}+\al_{i_{2}}+\al_{i_{3}}|\,0) & (\al_{i_{1}}+\al_{i_{2}}|\,0) & (\al_{i_1}|\,0) \\ 
(\al_{i_{1}}+\al_{i_{2}}+\al_{i_{3}}+\al_{i_{4}}|\,\al_{k_1})
 &    (\al_{i_{1}}+\al_{i_{2}}|\frac{(n-2)^2}{(n-3)(n-1)}\al_{k_1}) & (\al_{i_1}|\frac{n-2}{n-3}\al_{k_1}) & (\al_{i_1}|\,\frac{n-1}{n-4}\al_{k_1})\\
 
 (\al_{i_{1}}+\al_{i_{2}}+\al_{i_{3}}|\,\al_{k_1}+\al_{k_2}) & (\al_{i_{1}}+\al_{i_{2}}|\,\frac{n-2}{n-3}(\al_{k_1}+\al_{k_2})) &  (\al_{i_{1}}+\al_{i_{2}}|\,\frac{n-2}{n-4}\al_{k_1})
  & (0|\,\al_{k_1})  \\ 
 (\al_{i_{1}}+\al_{i_{2}}+\al_{i_{3}}|\,\frac{n-3}{n-4}\al_{k_1}) &  (\al_{i_{1}}+\al_{i_{2}}+\al_{i_{3}}|\frac{n-2}{n-1}\al_{k_1}) & 
 (\al_{i_{1}}|\,\frac{n-1}{n-3}(\al_{k_1}+\al_{k_2})) & (0|\al_{k_1}+\al_{k_2})  \\
 \end{matrix}$$ \end{footnotesize}
where $1\leq i_{1} < i_{2}< i_{3} < i_{4}\leq n$ and $1\leq k_1< k_2\leq n$.
Define the permutation
\begin{equation}
    \label{eqn:pi}
    \pi(i_1 | i_2 | \cdots | i_{2n}|k_1 | k_2 | \cdots | k_{2n})=(k_2| \cdots | k_{2n} | k_1 | i_2 | \cdots | i_{2n} | i_1).
\end{equation}
Let $\Wcal_2$ be the set of tensors of the form $u^{\otimes 5}$ where $u$ is the image of one of the above vectors under permutation $\pi$. Define $\Wcal:= \Wcal_{1} \cup \Wcal_{2}$. See Figure~\ref{fig:neq4} for an illustration.
\end{definition} 

\begin{figure}[hbtp]
    \centering
    \includegraphics[width=13cm]{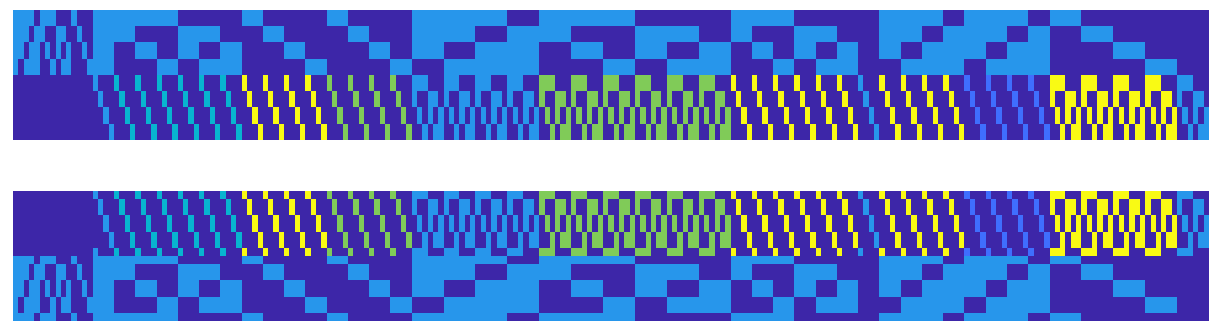}
    \caption{Let $n=4$. The sets $\Wcal_1$ and $\Wcal_2$ each consist of 225 tensors $u^{\otimes 5}$ for some $u \in \RR^{16}$. We illustrate these two $16 \times 225$ matrices of $u$ vectors as heatmaps.}  
    \label{fig:neq4}
\end{figure}

Definition~\ref{def:Wset} is the extension of~\cite[Lemma 11.4]{shitov2020comon} from order $4$ to $6$. We use it to find an order $6$ counterexample, which we now describe in more detail.

\begin{theorem}
\label{thm:order6counter}
 Let $\Ccal = x^6 - 3 y^6$ and let $\Wcal 
 \subset (\RR^{28})^{\otimes 5}$  be as in Definition~\ref{def:Wset}. Let $\Ccal_c$ be the $14$ clone of $\Ccal$. Then $\SAdj(\Ccal_c, \Wcal) \in (\RR^{5180})^{\otimes 6}$ has rank 30913 and symmetric rank 30914.
\end{theorem}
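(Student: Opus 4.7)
The plan is to apply Corollary~\ref{cor:counter_ex} with $d=6$, $n=14$, $\Ccal=x^6-3y^6$, and $\Wcal$ the explicit set from Definition~\ref{def:Wset}. That corollary already packages the whole argument: once one verifies the five hypotheses (2), (3), (4e), (4$\epsilon$), (6) listed at the start of Section~\ref{sec:order6}, it gives $\rk \SAdj(\Ccal_c,\Wcal) = 6\dim\Span\Wcal +1$ and $\srk \SAdj(\Ccal_c,\Wcal) = 6\dim\Span\Wcal +2$. So the first step is to count: I would compute $\dim\Span\Wcal = 5152$ from the explicit list of generating vectors in Definition~\ref{def:Wset} (showing that after removing dependencies, the $225+225 = 450$ tensors in $\Wcal_1 \cup \Wcal_2$ contribute $5152$ linearly independent tensors, giving ambient dimension $28 + 5152 = 5180$ and the claimed ranks $30913$ and $30914$).

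Condition (2) is immediate from the definition, since each element of $\Wcal$ is given as $u^{\otimes 5}$. For (3), I would exhibit, for each of the four polynomials $x^4y, x^3y^2, x^2y^3, xy^4$, an explicit linear combination of tensors from $\Wcal_1$ realising its $14$-clone. The clone of $x^jy^{5-j}$ is, up to symmetrisation, a symmetric function of the $\alpha_i$'s (since $\mathbb{I}(E,1) = \sum_i \alpha_i$), and the $5$-th symmetric powers in the first column of the table in Definition~\ref{def:Wset} provide enough combinations $(\alpha_{i_1}+\cdots+\alpha_{i_k})^{\otimes 5}$ to recover each such symmetric tensor by standard inclusion–exclusion on $k$. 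The permutation $\pi$ of~\eqref{eqn:pi} swaps the roles of $E$ and $\mathcal{E}$, so (3) for the $\mathcal{E}$ side follows from $\Wcal_2 = \pi(\Wcal_1)$.

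The main obstacles are conditions (4e), (4$\epsilon$), and (6), which together form a structural rigidity statement: the only decomposable tensors in $\mathbb{I}(E,6)\Mod \Wcal_E$ (respectively $\mathbb{I}(\mathcal{E},6)\Mod \Wcal_\mathcal{E}$) are the all-ones tensors, and no rank one tensor in $\Span\Wcal$ has an $E$-restriction compatible with $\mathbb{I}(E,6)\Mod\Wcal_E$. By the $E\leftrightarrow\mathcal{E}$ symmetry induced by $\pi$, (4$\epsilon$) reduces to (4e) and the $\mathcal{E}$-half of (6) reduces to the $E$-half. For (4e), I would suppose $v^{\otimes 6}\in \mathbb{I}(E,6)\Mod\Wcal_E$, write $v = \sum_i c_i \alpha_i$, and use the rigid pairing structure of the $\alpha_i$ (each supported on a single size-$2$ block) together with slice-by-slice comparison to force all $c_i$ equal. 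The rational coefficients $\tfrac{n-2}{n-3}$, $\tfrac{(n-2)^2}{(n-3)(n-1)}$, $\tfrac{n-1}{n-4}$, etc., appearing in Definition~\ref{def:Wset} are tuned exactly so that these comparisons force $v$ to be a multiple of $\sum_i \alpha_i$; tracking these cancellations is the bulk of the work.

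The blueprint is parallel to the $d=4$ case~\cite[Lemma 11.4 and Sections 10–11]{shitov2020comon}, but adapted to the larger table in Definition~\ref{def:Wset}, which contains enough generating vectors to handle the four monomials $x^4y, x^3y^2, x^2y^3, xy^4$ instead of only $x^2y, xy^2$. I would defer the full combinatorial case analysis for (4e) and (6) to the appendix and, in the main text, only assemble the three conditions into the hypotheses of Propositions~\ref{prop:shitov1} and~\ref{prop:shitov2}, from which Corollary~\ref{cor:counter_ex} delivers Theorem~\ref{thm:order6counter} with the stated ranks.
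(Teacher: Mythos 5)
Your skeleton is the same as the paper's: verify (2), (3), (4e), (4$\epsilon$), (6) and feed them into Propositions~\ref{prop:shitov1} and~\ref{prop:shitov2} via Corollary~\ref{cor:counter_ex}, then count $\dim \Span \Wcal$. But several of your concrete steps would fail. First, the bookkeeping is inconsistent: the figure of $225+225=450$ tensors is the $n=4$ illustration from Figure~\ref{fig:neq4}, whereas the theorem requires $n=7$ (condition (4e) only holds for $n\geq 7$, see Proposition~\ref{prop:4e}); for $n=7$ each of $\Wcal_1,\Wcal_2$ has $2576$ elements, and $450$ tensors cannot "contribute $5152$ linearly independent tensors.'' Moreover the exact values $30913$ and $30914$ need $\dim\Span\Wcal = |\Wcal| = 5152$, i.e.\ a proof that the $5152$ tensors are linearly independent (Lemma~\ref{lem:indep_li6} and Proposition~\ref{prop:li_6} in the paper); "removing dependencies'' would change both numbers, so this step cannot be waved through. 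Second, your argument for (3) is wrong as stated: the first-column tensors $(\al_{i_1}+\cdots+\al_{i_k}\,|\,0)^{\otimes 5}$ vanish at every entry having an index in $\mathcal{E}$, so no inclusion–exclusion among them can produce the clone of $x^4y$ or $x^3y^2$, whose supports involve one or two indices in $\mathcal{E}$. One genuinely needs the mixed types $(v\,|\,w)^{\otimes 5}$, and the rational coefficients $\tfrac{n-3}{n-4},\tfrac{(n-2)^2}{(n-3)(n-1)},\dots$ are tuned precisely so that the unwanted cross terms cancel; this is the content of Lemmas~\ref{lem:mu41} and~\ref{lem:mu32}.

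Third, your plan for (4e) has a logical gap and misattributes where the difficulty lies. Condition (4e) concerns arbitrary decomposable tensors $u_1\otimes\cdots\otimes u_6$ in $\mathbb{I}(E,6)\Mod\Wcal_E$, not symmetric powers $v^{\otimes 6}$, and there is no justification for writing a factor as $\sum_i c_i\al_i$: an element of $\mathbb{I}(E,6)\Mod\Wcal_E$ is built by adding elements of $\Span\Wcal_E$ to slices, which imposes no such constraint on the factors of a decomposable member. The paper's proofs of (4e), (4$\epsilon$), and (6) (Propositions~\ref{prop:4e} and~\ref{prop:cond6}) never use the fine-tuned coefficients at all; they use only the support pattern of $\Wcal_E$ — every tensor in $\Wcal_E$ vanishes at index tuples whose entries are pairwise separated by at least $2$ modulo $2n$ — which forces every tensor in $\mathbb{I}(E,6)\Mod\Wcal_E$ to equal $1$ at all such well-separated tuples; for a decomposable $u_1\otimes\cdots\otimes u_6$ these equalities force all entries of each $u_i$ to coincide, and for (6) they show every slice of a tensor in $\mathbb{I}(E,6)\Mod\Wcal_E$ has a nonzero entry while $u_E$ has a zero entry since $u(1)u(3)u(5)u(7)u(9)=0$. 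So the "tracking of cancellations'' you defer to the appendix is not where the rigidity comes from, and as proposed your case analysis would not cover the non-symmetric decomposable tensors that (4e) must exclude.
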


We prove Theorem~\ref{thm:order6counter}, and therefore Theorem~\ref{thm:B}, by showing that $\Wcal$ satisfies conditions (2), (3), (4$e$), (4$\epsilon$) and (6), provided $n \geq 7$. 
Condition (2) holds, since each tensor in $\Wcal$ is rank one. We show that the remaining conditions hold.

\begin{proposition}
\label{prop:cond3}
Condition (3) holds for $\Wcal$ in Definition~\ref{def:Wset}, provided $n \geq 5$.
\end{proposition}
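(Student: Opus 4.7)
The plan is to exhibit, for each $b \in \{1,2,3,4\}$, an explicit rational linear combination of tensors from $\Wcal$ that equals the $n$-clone of $x^{5-b}y^{b}$. Writing $\alpha_i^E$ and $\alpha_i^{\mathcal{E}}$ for the $E$- and $\mathcal{E}$-projections of $\alpha_i$, the $n$-clone of $x^{5-b}y^{b}$ is, up to a nonzero scalar, the symmetric tensor $\bigl(\sum_{i=1}^{n}\alpha_i^{E}\bigr)^{5-b}\bigl(\sum_{j=1}^{n}\alpha_j^{\mathcal{E}}\bigr)^{b}$, so the problem becomes one of expressing four specific symmetric polynomials in the $\alpha_i^E, \alpha_i^{\mathcal{E}}$ as rational combinations of the fifth powers listed in Definition~\ref{def:Wset}.

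First I would reduce from four cases to two using the permutation $\pi$ of~\eqref{eqn:pi}: by construction $\pi$ sends the vectors defining $\Wcal_1$ to those defining $\Wcal_2$ and it swaps the $E$- and $\mathcal{E}$-blocks, so it maps the clone of $x^{5-b}y^{b}$ to a scalar multiple of the clone of $x^{b}y^{5-b}$. It therefore suffices to realize the clones of $x^{4}y$ and $x^{3}y^{2}$ inside $\Span\Wcal_1$; the clones of $xy^{4}$ and $x^{2}y^{3}$ then follow by applying $\pi$ and landing inside $\Span\Wcal_2\subset\Span\Wcal$.

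For each of the two remaining cases, I would build the decomposition by binomial expansion. A rank-one symmetric tensor of the form $(v\mid c w)^{\otimes 5}$ in $\Wcal_1$ equals $\sum_{k=0}^{5}\binom{5}{k}c^{k}\,v^{\,5-k}\,w^{\,k}$, of which only the $k=b$ term can contribute to the target clone. Summing the tensors in $\Wcal_1$ over all index subsets $\{i_1,\ldots,i_j\}\subseteq[n]$ and all choices of $\{k_1\}$ or $\{k_1,k_2\}$ with carefully chosen rational weights, the $v$-dependence collapses into a power of $\sum_{i}\alpha_i^{E}$ by an inclusion--exclusion over subset sizes, while the $w$-dependence collapses into a power of $\sum_{j}\alpha_j^{\mathcal{E}}$. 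The scalars $\tfrac{n-2}{n-3}$, $\tfrac{(n-2)^{2}}{(n-3)(n-1)}$, $\tfrac{n-1}{n-4}$, and $\tfrac{n-1}{n-3}$ attached to the rows of Definition~\ref{def:Wset} are designed precisely so that, after summation, the contaminating binomial terms with the wrong number of $\mathcal{E}$-factors cancel, leaving a nonzero multiple of the desired clone.

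The main obstacle is the combinatorial bookkeeping: every rank-one tensor in $\Wcal_1$ contributes to several bidegrees $v^{5-k}w^{k}$ at once, so many independent cancellations must be engineered within a single linear combination. Verifying them reduces to checking a handful of polynomial identities in $n$ and to a careful tracking of coefficients over the various subset sizes; the hypothesis $n\geq 5$ enters exactly at the step of dividing by $n-3$ and $n-4$, so that the prescribed coefficients in $\Wcal$ are well defined.
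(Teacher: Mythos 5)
Your overall strategy coincides with the paper's: reduce condition (3) via the permutation $\pi$ of \eqref{eqn:pi} (which on clones simply exchanges the two blocks and carries $\Span\Wcal_1$ onto $\Span\Wcal_2$) to showing that the clones of $x^4y$ and $x^3y^2$ lie in $\Span\Wcal_1$; identify the clone of $x^{5-b}y^{b}$ with the corresponding bidegree-$(5-b,b)$ product of the two block-wise all-ones linear forms; expand each $(v\,|\,cw)^{\otimes 5}$ by bidegree; and note that $n\geq 5$ is what makes the rational scalars in Definition~\ref{def:Wset} well defined. All of this matches how the paper argues.

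The gap is that the decisive step is asserted rather than carried out. You say the scalars in Definition~\ref{def:Wset} ``are designed precisely so that'' the contaminating bidegree components cancel after summing over index subsets with ``carefully chosen rational weights'', but you never produce those weights nor verify a single cancellation; in a blind proof this assumes exactly what has to be shown. That verification is the substance of the paper's proof, supplied in Lemmas~\ref{lem:mu41} and~\ref{lem:mu32}, where the outer coefficients (for instance $\lambda_1=-\frac{(n-4)^2}{n-3}$ through $\lambda_8$ for $x^4y$, and $\mu_1,\dots,\mu_{10}$ for $x^3y^2$) are written down explicitly and the identities checked. Your sketch also glosses over a structural point of those identities: the subset sums built from the mixed vectors reproduce the clone only away from the diagonal blocks $\Tcal_E$ and $\Tcal_{\mathcal{E}}$ (the bidegrees $(5,0)$ and $(0,5)$), and a second linear combination of the tensors $(\,\cdot\,|\,0)^{\otimes 5}$ and $(0\,|\,\cdot\,)^{\otimes 5}$, with its own coefficients, is needed to repair those blocks. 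To complete your argument you would have to either exhibit identities of the type in Lemmas~\ref{lem:mu41}--\ref{lem:mu32} or at least prove solvability of the linear system in the unknown weights; as written, the plan is correct in outline but incomplete at precisely this point.
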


\begin{proof} 
This proof is the $d=6$ analogue to~\cite[Lemma 11.7]{shitov2020comon}. The set $\Mcal$ is equal to $\{ x^4 y, x^3 y^2, x^2 y^3, x y^4 \}$.
The $n$ clones of $x^4 y$ and $x^3 y^2$ are in $\Span\Wcal_1$: 
see our matlab code \verb|github.com/seigal/loborrt|
for a numerical check, or Lemma \ref{lem:mu41} and Lemma~\ref{lem:mu32} for the algebraic identities.
Similarly, the clones of $x^2 y^3$ and $xy^4$ are in $\Span\Wcal_2$: for tensors that are clones, the permutation~$\pi$ in~\eqref{eqn:pi} just swaps the first $2n$ indices with the second $2n$ indices. Hence the $n$ clone of every tensor in $\Mcal$ is in $\Span \Wcal$.
\end{proof}

\begin{proposition}
\label{prop:4e}
Properties (4e) and (4$\epsilon$) hold for $\Wcal$ in Definition~\ref{def:Wset}, for $n \geq 7$.
\end{proposition}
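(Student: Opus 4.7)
The plan is to reduce the problem to a rigidity statement for a rank one tensor in $(\RR^{2n})^{\otimes 6}$ whose values are pinned down on the ``generic'' 6-tuples of indices. Because $\Wcal_1$ and $\Wcal_2$ are exchanged by the permutation $\pi$ of~\eqref{eqn:pi}, which also swaps the $E$-block with the $\mathcal{E}$-block of coordinates, it suffices to prove (4e); condition (4$\epsilon$) follows from the same argument applied to $\Wcal_\mathcal{E}$.

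The first step will be a support computation for $\Span \Wcal_E$. Every tensor in $\Wcal_1$ is of the form $u^{\otimes 5}$ where the first $2n$ coordinates of $u$ form a linear combination of at most four of the vectors $\al_{i_1}, \ldots, \al_{i_4}$, each supported in a single pair $P_i := \{2i-1, 2i\}$; hence the restriction to $E$ is supported on indices lying in at most four pair classes. The last $2n$ coordinates of any tensor in $\Wcal_1$ are proportional to $0$, $\al_{k_1}$, or $\al_{k_1}+\al_{k_2}$, so after the cyclic shift induced by $\pi$, the restriction to $E$ of any tensor in $\Wcal_2$ is supported on at most four coordinates occupying at most four pair classes. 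In combination, every element of $\Span \Wcal_E$ vanishes on any 5-tuple of indices whose entries lie in five pairwise distinct pair classes.

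Next, let $\Tcal = v_1 \otimes \cdots \otimes v_6$ be a decomposable tensor in $\mathbb{I}(E,6) \Mod \Wcal_E$, so that
\[ \Tcal(k_1|\ldots|k_6) \;=\; 1 + \sum_{j=1}^{6} M_j^{(k_j)}(k_1|\ldots|\widehat{k_j}|\ldots|k_6), \quad M_j^{(k_j)} \in \Span \Wcal_E. \]
Whenever $k_1, \ldots, k_6$ occupy six distinct pair classes, each tuple $(k_1, \ldots, \widehat{k_j}, \ldots, k_6)$ occupies five distinct pair classes, and the support result forces $\Tcal(k_1|\ldots|k_6)=1$. Comparing $\Tcal(\ldots| 2i-1)$ with $\Tcal(\ldots| 2i)$ (changing only one index within its pair, with the other five occupying five pair classes disjoint from $P_i$, which requires $n \ge 6$), and using that $\Tcal$ is rank one with nonzero factors on this set, I obtain $v_j(2i-1) = v_j(2i)$ for all $i$ and $j$; write $w_j(i)$ for this common value. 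The rank one identity then reads $\prod_{j=1}^{6} w_j(i_j) = 1$ for all six-element subsets $\{i_1,\ldots,i_6\} \subset [n]$.

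Finally, for any $j$ and any distinct $i, i' \in [n]$, the hypothesis $n \ge 7$ lets me choose five elements $i_1, \ldots, i_5 \in [n] \setminus \{i, i'\}$, from which $w_j(i) = w_j(i')$. Each $w_j$ is therefore a constant $c_j$ with $\prod_j c_j = 1$, so each $v_j$ is a scalar multiple of the all-ones vector on $\RR^{2n}$ and $\Tcal = \mathbb{I}(E,6)$. I expect the main obstacle to be the support analysis of $\Wcal_2|_E$: the cyclic shift induced by $\pi$ breaks the pair structure of the $\al_{k_j}$, and one must check by a short case analysis on the shape of $b \in \{0, \al_{k_1}, \al_{k_1}+\al_{k_2}\}$ that the resulting shifted supports still occupy at most four pair classes, so that the crucial vanishing on 5-tuples with five distinct pair classes is preserved.
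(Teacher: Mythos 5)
Your proof is correct and follows essentially the same strategy as the paper: a support argument showing that elements of $\Span \Wcal_E$ vanish on sufficiently separated $5$-tuples, hence $\Tcal$ equals $1$ on separated $6$-tuples, then using the rank-one structure to force all entries of each factor $v_j$ to be equal. The paper phrases the separation condition as cyclic pairwise distances $\geq 2$ and deduces constancy of the $u_j$ by explicit index choices (e.g.\ $\Tcal(2|4|6|8|10|i)=1$), whereas you package the same information more cleanly via pair classes and a two-step argument (intra-pair equality $v_j(2i-1)=v_j(2i)$ followed by inter-pair equality), but the underlying route is the same.
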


\begin{proof}
This proof is the $d=6$ analogue to~\cite[Lemma 11.9]{shitov2020comon}. By symmetry, we only need to prove (4e). 
Every tensor in $\Wcal_E$ is zero at location $(k_1 | \cdots | k_{5})$, provided all ${{5 \choose 2}}$ differences $\delta_{ij} = (k_i - k_j) \Mod 2n$ satisfy $|\delta_{ij}| \geq 2$. Such entries exist provided $n \geq 5$. For example, all tensors in $\Wcal$ are zero at entry $(1|3|5|7|9)$. 

Let $\Tcal$ be a tensor in $\mathbb{I}(E,6)\Mod \Wcal_{E}$. Then $\Tcal(k_1| \cdots | k_6) =1$ whenever all the ${{6 \choose 2}}$ differences $\delta_{ij} = (k_i - k_j) \Mod 2n$ satisfy $|\delta_{ij}| \geq 2$. Such entries exist provided $n \geq 6$. For example, $\Tcal(1|3|5|7|9|11) = 1$, $\Tcal(2|4|6|8|10|12) = 1$, and $\Tcal(1|4|6|8|10|12) = 1$, and the entries of $\Tcal$ at all permutations of these indices are also $1$.

Assume that $\Tcal$ is decomposable, $\Tcal = u_1 \otimes \cdots \otimes u_6$.
Since $\Tcal(2|4|6|8|10|i) = 1$ for $i \in \{ 12,\ldots,2n \}$, we have  $u_6(12) = u_6(13) =. \cdots = u_6(2n)$.
This gives equality of multiple entries of $u_6$, provided $n \geq 7$. 
Similarly, $\Tcal(2n-1|2n-3|2n-5|2n-7|2n-9|i) = 1$ for $i \in \{ 1, \ldots, 2n-11\}$, hence we have $u_6(1) = u_6(2) = \cdots = u_6(2n-11)$. 
Other combinations of indices show that all 
entries of $u_6$ are equal. By a similar argument, all entries of the vectors $u_i$ are equal for $i \in \{ 1, \ldots, 5\}$. So all the entries of $\Tcal$ are equal. Since some entries of $\Tcal$ are one, we conclude that $\Tcal = \mathbb{I}(E,6)$.
\end{proof}

\begin{proposition}
\label{prop:cond6} 
Property (6) holds for $\Wcal$ in Definition~\ref{def:Wset}, provided $n \geq 6$.
\end{proposition}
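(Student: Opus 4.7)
The proof proceeds by contradiction, mirroring the $d=4$ analogue in~\cite[Lemma 11.10]{shitov2020comon}. Suppose $\Tcal = u_E \otimes T$ lies in $\mathbb{I}(E,6) \Mod \Wcal_E$ for some $u \in \RR^{4n}$ with $u^{\otimes 5} \in \Span \Wcal$ and some $T \in (\RR^E)^{\otimes 5}$; the plan is to derive a contradiction in three steps.

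The first step is to pin the generic entries of $\Tcal$. Every generator of $\Wcal_E$ vanishes on each index tuple $(k_1|\ldots|k_5) \in E^5$ whose pairwise cyclic distances in $\{1,\ldots,2n\}$ are all at least $2$, hence every element of $\Span \Wcal_E$ vanishes on such ``generic'' tuples. Therefore every element of $\mathbb{I}(E,6) \Mod \Wcal_E$, and $\Tcal$ in particular, takes value $1$ at every generic $6$-tuple, by the same argument as in the proof of Proposition~\ref{prop:4e}. Such generic $6$-tuples exist in $E^6$ for $n \geq 6$.

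The second step exploits the rank-one first-mode structure $\Tcal(k_1|\ldots|k_6) = u_E(k_1)\, T(k_2|\ldots|k_6)$. At a generic tuple this equation reads $u_E(k_1)\,T(k_2|\ldots|k_6) = 1$; in particular both factors are nonzero. Fixing a generic tail $(k_2|\ldots|k_6)$ and varying $k_1$ over all indices in $E$ at cyclic distance $\geq 2$ from the tail, the value $u_E(k_1) = 1/T(k_2|\ldots|k_6)$ is a single nonzero constant on a large subset of $E$ (the complement of an at most $15$-element forbidden set). Varying the tail and overlapping the resulting constant subsets exhausts $E$ when $n \geq 6$, so $u_E = c\cdot \mathbb{I}(E,1)$ for some $c \neq 0$.

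The third and main step is to rule out this conclusion: no nonzero rank-one tensor $u^{\otimes 5}$ in $\Span\Wcal$ has $u_E$ a nonzero scalar multiple of $\mathbb{I}(E,1)$. Each generator in $\Wcal_1$ has $E$-component either $0$ or a $\{0,1\}$-valued block sum $\sum_{\ell}\alpha_{i_\ell}$ supported on at most four of the pairs $\{2i-1,2i\}$; after applying $\pi$, each generator in $\Wcal_2$ has $E$-component either $0$ or a $\pi$-shifted scalar multiple of a single $\alpha_k$. In particular the $E$-component of every generator vanishes on most of $E$. Showing that the rank-one locus inside $\Span\Wcal$ excludes vectors with $E$-component proportional to the all-ones vector is the combinatorial heart of the argument: the arithmetic ratios $\tfrac{n-2}{n-3}$, $\tfrac{n-1}{n-4}$, $\tfrac{(n-2)^2}{(n-3)(n-1)}$, and their variants in Definition~\ref{def:Wset} are engineered precisely so that the only rank-one tensors in $\Span\Wcal$ inherit the block structure of the generators and cannot have constant $E$-component. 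This is where I expect the main obstacle, and the step should follow the bookkeeping of \cite[Section 11]{shitov2020comon} adapted to $d=6$.
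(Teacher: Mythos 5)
Your Step 1 matches the paper and is correct. But Steps 2 and 3 overcomplicate the argument, and Step 3 is not actually carried out — you defer it to ``the bookkeeping of Section 11'' without doing it, which is the crux of the matter and is a genuine gap in the proposal as written. More importantly, you missed a one-line observation that makes Steps 2 and 3 unnecessary: the hypothesis $u^{\otimes 5} \in \Span\Wcal$ already forces $u_E$ to have a \emph{zero} entry. Indeed every generator of $\Wcal$ vanishes at the tuple $(1|3|5|7|9)$ (pairwise cyclic distances $\geq 2$, which exist in $E$ once $n \geq 5$), so $u^{\otimes 5}(1|3|5|7|9) = u(1)u(3)u(5)u(7)u(9) = 0$ and some $u_E(k) = 0$. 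Hence $u_E \otimes (\RR^E)^{\otimes 5}$ has a slice of all zeros. The paper then shows the complementary fact: every tensor in $\mathbb{I}(E,6)\Mod\Wcal_E$ has a nonzero entry in every slice, because for each $i$ the entry at $(i\,|\,(i+2)\Mod 2n\,|\cdots|\,(i+12)\Mod 2n)$ is a generic tuple (pairwise distances $\geq 2$ for $n\geq 6$) and therefore equals $1$. Zero slice versus no zero slice gives disjointness immediately.

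Note also that your Step 2 conclusion ($u_E = c\cdot\mathbb{I}(E,1)$ with $c\neq 0$) is in direct tension with the elementary observation above (which says $u_E$ has a zero entry), so had you noticed it, you could have stopped after Step 2. Your Step 3 tries to rule out a constant-$u_E$ rank-one tensor inside $\Span\Wcal$ by analysing the rank-one locus and the engineered arithmetic ratios — this is hard and, as you acknowledge, not done; the paper sidesteps this entirely because the needed information about $u_E$ is already available from the vanishing of $\Wcal$ at a single generic $5$-tuple.
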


\begin{proof}
We want to show that the sets $u_{E}\otimes(\mathbb{R}^{E})^{\otimes 5}$ and $\mathbb{I}(E,6)\Mod \mathcal{W}_E$ are disjoint for all $u^{\otimes 5} \in \Span \mathcal{W}$.
Fix $\Tcal = u^{\otimes 5} \in \Span \Wcal$. 
Then $\Tcal(1|3|5|7|9) = 0$, since this is true for every tensor in $\Wcal$, using the fact that $n \geq 5$.  Hence $u_E$ has some entry equal to zero, and 
so
$u_{E}\otimes(\mathbb{R}^{E})^{\otimes 5}$ contains a slice of zeros.
We show that every tensor in $\mathbb{I}(E,6)\Mod \mathcal{W}_E$ has a non-zero entry in every slice. Given an index $i$, consider the $(i|(i+2) \Mod 2n|\ldots|(i+12) \Mod 2n)$ entry of a tensor in  $\mathbb{I}(E,6)\Mod \mathcal{W}_E$. The difference between any pair of indices is at least $2$, since $n \geq 6$.
Hence, in any subset of 5 of these indices, every tensor in $\Wcal_E$ has a zero at that entry. Hence the $(i|(i+2) \Mod 2n|\ldots|(i+12) \Mod 2n)$ entry of any tensor in $\mathbb{I}(E,6)\Mod \mathcal{W}_E$ is $1$, cf.~\cite[Lemma 11.15]{shitov2020comon}.
\end{proof}

Next, we show that that tensors in $\Wcal$ are linearly independent. 
This is required to compute the rank and symmetric rank of the counterexample $\SAdj(\Ccal_c,\Wcal)$.
We also show that the tensors in the order $4$ example from~\cite{shitov2020comon} are linearly independent. This verifies the stated rank and symmetric rank for the order $4$ example from~\cite{shitov2020comon}.

\begin{lemma}
\label{lem:indep_li6}
Fix $\Tcal_1 \in \Span \Wcal_1$ and $\Tcal_2 \in \Span \Wcal_2$, where $\Wcal_1$ and $\Wcal_2$ are as in Definition~\ref{def:Wset}, with $n \geq 5$. If $\Tcal_1 + \Tcal_2 = 0$, then $\Tcal_1 = \Tcal_2 = 0$. 
\end{lemma}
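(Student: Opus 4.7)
My plan is to extract a strong symmetry from the constraint $\Tcal_1 + \Tcal_2 = 0$ and reduce the problem to pointwise checks at carefully chosen entries. Every vector $u$ in the list defining $\Wcal_1$ is a linear combination of $\alpha_i$'s, each of which is invariant under the involution $\sigma\in S_{4n}$ that swaps $2i{-}1$ and $2i$ for every $i\in\{1,\ldots,2n\}$. Hence $u^{\otimes 5}$ is invariant under $\sigma$ applied to any single mode, and every $\Tcal_1\in\Span\Wcal_1$ inherits this invariance. Dually, every $v\in\Wcal_2$ has the form $\pi(u')$ and is therefore invariant under the conjugate involution $\sigma':=\pi\sigma\pi^{-1}$; a direct computation from the definition of $\pi$ shows that $\sigma'$ acts on $E=\{1,\ldots,2n\}$ as $(1\ 2n)(2\ 3)(4\ 5)\cdots(2n{-}2\ 2n{-}1)$, with the analogous transpositions on $\mathcal{E}$. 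So every $\Tcal_2\in\Span\Wcal_2$ is $\sigma'$-invariant in each mode.

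Together, the transpositions of $\sigma$ and $\sigma'$ generate all adjacent transpositions on $\{1,\ldots,2n\}$ and on $\{2n{+}1,\ldots,4n\}$, so $\langle\sigma,\sigma'\rangle=S_{2n}\times S_{2n}$. Given $\Tcal_1+\Tcal_2=0$, the tensor $\Tcal:=\Tcal_1=-\Tcal_2$ is then $S_{2n}\times S_{2n}$-invariant in each mode. It therefore lies in the six-dimensional subspace
\[
V := \Span\bigl\{\mathrm{Sym}\bigl(\mathbb{I}(E,6)^{\otimes k}\otimes\mathbb{I}(\mathcal{E},6)^{\otimes(5-k)}\bigr) : k=0,1,\ldots,5\bigr\},
\]
and we may write $\Tcal=\sum_{k=0}^5 a_k\,\mathrm{Sym}\bigl(\mathbb{I}(E,6)^{\otimes k}\otimes\mathbb{I}(\mathcal{E},6)^{\otimes(5-k)}\bigr)$. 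The lemma reduces to showing $a_0=\cdots=a_5=0$.

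The final step uses a multiplicity cap: every $u\in\Wcal_1$ has at most four $\alpha_i$-summands in its first block and at most two in its second block, while every $v\in\Wcal_2$ (being $\pi(u')$) has at most two distinct $\sigma'$-pairs of nonzero entries in its first block and at most four in its second. For $k\in\{0,1,2,5\}$, I would choose an index tuple $(k_1|\cdots|k_5)$ with $k$ coordinates in $E$ and $5-k$ in $\mathcal{E}$, drawn from pairwise distinct $\sigma$-pairs; no $u\in\Wcal_1$ contributes (five pairs in $E$ or three in $\mathcal{E}$ would be required), so $\Tcal(k_1|\cdots|k_5)=0$. Comparing with the value $a_k/\binom{5}{k}$ read off from the decomposition of $\Tcal$ forces $a_k=0$. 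For $k\in\{3,4\}$, I would run the symmetric argument with the $\Wcal_2$-expansion of $\Tcal$ using pairwise distinct $\sigma'$-pairs in $E$, which forces the entry to vanish since three (resp.\ four) $\sigma'$-pairs in $E$ exceed the maximum of two available from any $\pi(u')$. The main obstacle is the careful bookkeeping of which vectors contribute, and the hypothesis $n\geq 5$ is exactly what is needed so that enough "scattered" index patterns exist in each block to isolate each $a_k$.
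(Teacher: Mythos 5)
Your proposal follows essentially the same route as the paper's proof: exploit the equal-entries-in-pairs structure of the vectors generating $\Wcal_1$ and $\Wcal_2$ to show that $\Tcal:=\Tcal_1=-\Tcal_2$ is a symmetric clone (constant on index patterns determined by which half each index lies in), and then evaluate at ``scattered'' entries, drawn from pairwise distinct pairs, at which every element of $\Wcal_1$ (respectively $\Wcal_2$) vanishes because its first block involves at most four $\alpha$'s and its second block at most two. Your second half is correct: killing $k\in\{0,1,2,5\}$ via $\Wcal_1$ and $k\in\{3,4\}$ via $\Wcal_2$ is exactly the paper's counting argument, just packaged slightly differently (the paper deduces $\Ucal\in\langle x^4y,x^3y^2\rangle$ from the $\Wcal_1$ side and $-\Ucal\in\langle x^2y^3,xy^4\rangle$ from the $\Wcal_2$ side).

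One step is wrong as written. From invariance of $\Tcal$ under the two involutions $\sigma$ and $\sigma'$ you conclude $\langle\sigma,\sigma'\rangle=S_{2n}\times S_{2n}$ ``because their transpositions generate all adjacent transpositions.'' The individual transposition factors of a permutation are not elements of the group it generates: $\langle\sigma,\sigma'\rangle$ is a (diagonally embedded) dihedral group, since $\sigma\sigma'$ is a rotation of each $2n$-cycle, and it is much smaller than $S_{2n}\times S_{2n}$. The conclusion you need survives, and either of two one-line repairs works: (i) each vector $u$ occurring in $\Wcal_1$ is constant on every pair $\{2j-1,2j\}$ separately, so $u^{\otimes 5}$, and hence $\Tcal_1$, is invariant under each individual transposition $(2j-1\;2j)$ applied to a single mode; these factors together with those coming from $\Wcal_2$ do generate $S_{2n}\times S_{2n}$ (this is in effect what the paper does via the chained equalities $\Tcal(i|\cdots)=\Tcal(i+1|\cdots)$); or (ii) even the dihedral group $\langle\sigma,\sigma'\rangle$ preserves the two halves and acts transitively on each, and mode-wise invariance under a half-preserving, half-transitive group already forces entries to depend only on the half-pattern of the indices. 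A minor further slip: $\mathbb{I}(E,6)$ denotes an order-six all-ones tensor in the paper's notation, whereas in your six-dimensional space $V$ you mean the all-ones vectors indexed by $E$ and by $\mathcal{E}$, symmetrized into order-five tensors; with that reading, $V$ is indeed the space of symmetric clones and your reduction is valid.
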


\begin{proof}
A tensor in $\Span \Wcal_1$ has \begin{equation}
    \label{eqn:ii+1}
    \Tcal(i|k_2|\cdots|k_d) = \Tcal(i+1|k_2|\cdots|k_d),
\end{equation}
for $i \in \{1, 3, \ldots, 2n-1, 2n+1, 2n+3, \ldots, 4n-1\}$, and all $k_2, \ldots, k_d$, by the definition of the vectors $\alpha_i$ in Definition~\ref{def:Wset}. Similarly, a tensor in $\Wcal_2$ satisfies~\eqref{eqn:ii+1} for $i \in \{ 2, 4, \ldots, 2n-2 , 2n + 2, \ldots, 4n-2\}$ as well as $\Tcal(1|k_2|\cdots|k_d) = \Tcal(2n|k_2|\cdots|k_d)$ and $\Tcal(2n+1|k_2|\cdots|k_d) = \Tcal(4n|k_2|\cdots|k_d)$. 
The tensor $\Tcal_1$ lies in $\Span \Wcal_1$ and $\Span \Wcal_2$, since $\Tcal_1 = - \Tcal_2$. Then $\Tcal_1$ satisfies~\eqref{eqn:ii+1} for $i \in \{1, \ldots, 2n-1\} \cup \{2n+1, \ldots, 4n-1\}$. Moreover, $\Tcal_1$ is symmetric, so $\Tcal(\cdots|k_{j-1}|i|\cdots)=\Tcal(\cdots|k_{j-1}|i+1|\cdots)$ for $i \in \{1, \ldots, 2n-1\} \cup \{2n+1, \ldots, 4n-1\}$ for 
any $j \in \{2, \ldots , d\}$.
This is the condition for $\Tcal$ to be a clone: $\Tcal(k_1|\cdots|k_d)=\Tcal(k_1'|\cdots|k_d')$ if $k_i, k_i' \in \{1,\ldots , 2n\}$ or if $k_i, k_i' \in \{2n+1,\ldots, 4n\}$.
That is, $\Tcal_1 = \Ucal_c$ for some symmetric $\Ucal \in (\RR^2)^{\otimes 5}$.
The symmetric tensor $\Ucal$ is a binary quintic. 
We show that $\Ucal \in \langle x^4 y, x^3 y^2\rangle$.
We have $\Tcal_1(1|3|5|7|9) = \Tcal_1(2n+1|2n+3|2n+5|2n+7|2n+9) = 0$, provided $n \geq 5$, since $\Tcal_1 \in \Span \Wcal_1$. Hence the monomials $x^5$ and $y^5$ do not appear in $\Ucal$. Moreover, we have
$\Tcal_1(1|1|2n+1|2n+3|2n+5) = \Tcal_1(1|2n+1|2n+3|2n+5|2n+7) = 0$, since $\Tcal \in \Span \Wcal_1$. Hence the monomials $x^2 y^3$ and $x y^4$ do not appear in $\Ucal$. Therefore $\Ucal \in \langle x^4 y, x^3 y^2\rangle$. By a similar argument for $\Wcal_2$, we conclude $-\Ucal \in \langle x^2 y^3, x y^4\rangle$. Hence $\Ucal = 0$.
\end{proof}

\begin{proposition}
\label{prop:li_4}
The set of tensors defined in~\cite[Definition 11.4]{shitov2020comon} are linearly independent.
\end{proposition}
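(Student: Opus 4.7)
The plan is to adapt the structure of the proof of Lemma~\ref{lem:indep_li6} from the order-$6$ setting to Shitov's order-$4$ setting, and supplement it with an internal independence argument for each of the two halves of the set. Write Shitov's set as $\Wcal = \Wcal_1 \cup \Wcal_2$, where $\Wcal_1$ consists of the rank-one tensors $u^{\otimes 3}$ whose defining vectors $u \in \RR^{4n}$ satisfy the pair equality $u(2i-1) = u(2i)$ coming from the $\alpha_i$ construction, and $\Wcal_2$ is the image of $\Wcal_1$ under the analog of the permutation $\pi$ from~\eqref{eqn:pi}.

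First I would show $\Span \Wcal_1 \cap \Span \Wcal_2 = \{0\}$ by repeating the argument of Lemma~\ref{lem:indep_li6} verbatim but in degree $d-1 = 3$. Any tensor $\Tcal$ in the intersection inherits the slice equalities of both halves; together with the symmetry of $\Tcal$, these force $\Tcal = \Ucal_c$ for some symmetric binary cubic $\Ucal \in (\RR^2)^{\otimes 3}$. Evaluating at index tuples such as $(1|3|5)$, $(2n+1|2n+3|2n+5)$, and $(1|1|2n+1)$, all of which give zero on every tensor of $\Wcal_1$ (provided $n$ is large enough for such index choices to be disjoint in the required sense), rules out the monomials $x^3$, $y^3$, and $xy^2$ from $\Ucal$, so $\Ucal \in \langle x^2 y \rangle$. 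The symmetric argument applied to $\Wcal_2$ gives $-\Ucal \in \langle xy^2 \rangle$, hence $\Ucal = 0$, and therefore $\Tcal = 0$.

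It then remains to prove that $\Wcal_1$ itself is linearly independent; the same argument will cover $\Wcal_2$ by the symmetry afforded by $\pi$. For this I would process the parametric families one at a time, in the order in which they appear in Shitov's definition. Each family is parametrised by an increasing tuple of indices $(i_1, i_2, \ldots)$ and optionally $(k_1, k_2, \ldots)$, and the defining vector $u$ has a distinctive block-support pattern determined by these parameters. Given a hypothetical relation $\sum_t c_t u_t^{\otimes 3} = 0$, I would first use probe entries supported on the block of a specified family to isolate that family, and then within the family use the combinatorial differences between the parametrising tuples to force the individual $c_t$ to vanish (this reduces to a Vandermonde-type determinant calculation on the parameters).

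The main obstacle will be the bookkeeping in the internal independence step, since Shitov's families have overlapping support in the coordinate blocks indexed by the $\alpha_i$. One has to pick the probe entries in the correct order to peel off one family at a time. The payoff is that the $d = 4$ setting has strictly fewer and smaller families than the order-$6$ set of Definition~\ref{def:Wset}, so the calculation reduces to a finite, straightforward case analysis analogous to the combinatorial arguments used for conditions~(4e) and~(6) above.
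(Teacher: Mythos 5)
Your proposal follows essentially the same route as the paper: within-half independence of $\Wcal_1^{(4)}$ (and of $\Wcal_2^{(4)}$ by the symmetry of $\pi$) via entry-by-entry peeling of the parametric families, combined with the clone argument of Lemma~\ref{lem:indep_li6} run in degree three to handle $\Span\Wcal_1^{(4)}\cap\Span\Wcal_2^{(4)}$. One small correction: the probe $(1|1|2n+1)$ does \emph{not} vanish on $\Wcal_1^{(4)}$ (e.g.\ $u=(\al_1+\al_2|\al_1)$ gives the value $1$ there); that entry corresponds to the monomial $x^2y$, which must survive, since the clone of $x^2y$ lies in $\Span\Wcal_1^{(4)}$. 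The monomial $xy^2$ is instead excluded by a probe such as $(1|2n+1|2n+3)$, which is zero on every tensor of $\Wcal_1^{(4)}$ because no defining vector has nonzero entries in two distinct $\al_k$-blocks of $\Ecal$.
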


\begin{proof}
Denote the set by $\Wcal^{(4)} = \Wcal_1^{(4)} + \Wcal_2^{(4)}$. We show linear independence of $\Wcal^{(4)}_1$.
A linear combination of tensors in $\Wcal_1^{(4)}$ is 
\begin{align}
\begin{split}
    \label{eqn:linear_comb}
    \sum_{\substack{1\leq i<j\leq5\\ 1\leq k\leq5}}b_{ijk}(\al_i+\al_j|\al_k)^{\otimes3}+\sum_{1\leq i<j\leq 5}c_{ij}(\al_i+\al_j|0)^\otimes3
    +\sum_{\substack{1\leq i\leq5 \\ 1\leq k\leq5}}b_{ik}(3\al_i|4\al_k)^{\otimes3}
 \\ + \sum_{1\leq i\leq 5}c_i(\al_i|0)^{\otimes3}+\sum_{1\leq k \leq 5}b_k(0|\al_k)^{\otimes3} .
\end{split}
\end{align}
This is a $20 \times 20 \times 20$ tensor whose entries are linear combinations of the $95$ coefficients.
Setting~\eqref{eqn:linear_comb} to zero gives a system of $8000 = 20 \times 20 \times 20$ equations in $95$ unknowns. We show that the $95$ coefficients must all be zero in three steps, illustrated in Figure~\ref{fig:li_order4}.

In~\eqref{eqn:linear_comb}, $3840$ of the $8000$ tensor entries are zero. A further $2400$ entries are a single coefficient, the coefficients of the $50$ elements of $\Wcal^{(4)}_1$ of the form
$(\al_i+\al_j|\al_k)^{\otimes 3}$.
If~\eqref{eqn:linear_comb} is zero, these coefficients vanish.
Removing these terms from~\eqref{eqn:linear_comb} gives a linear combination of the remaining $45$ tensors in $\Wcal^{(4)}_1$. Repeating the argument, we have $1680$ entries of the tensor that are a single coefficient, the coefficients of $35$ tensors. Setting these to zero gives a linear combination of $10$ tensors in $\Wcal^{(4)}_1$, with $80$ non-zero entries, each equal to a single coefficient. These are the coefficients of the remaining $10$ vectors in $\Wcal^{(4)}_1$. Hence all $95 = 50 + 35 + 10$ tensors in $\Wcal^{(4)}_1$ have coefficient zero.

It remains to show that if $\Tcal_i \in \Wcal_i^{(4)}$ with $\Tcal_1 + \Tcal_2 = 0$, then $\Tcal_1 = \Tcal_2 = 0$. This is Lemma~\ref{lem:indep_li6} but in the order four case, with similar proof: a similar argument shows that $\Tcal_1 = - \Tcal_2$ is the clone of some $\Ucal \in (\RR^2)^{\otimes 3}$. Then $\Tcal_1 \in \Span \Wcal_1^{(4)}$ implies $\Ucal \in \langle x^2 y \rangle$ while $\Tcal_2 \in \Span \Wcal_2^{(4)}$ implies $-\Ucal \in \langle xy^2 \rangle$. Hence $\Ucal = 0$.
\end{proof}

\begin{figure}[hbtp]
    \centering
    \includegraphics[width=13cm]{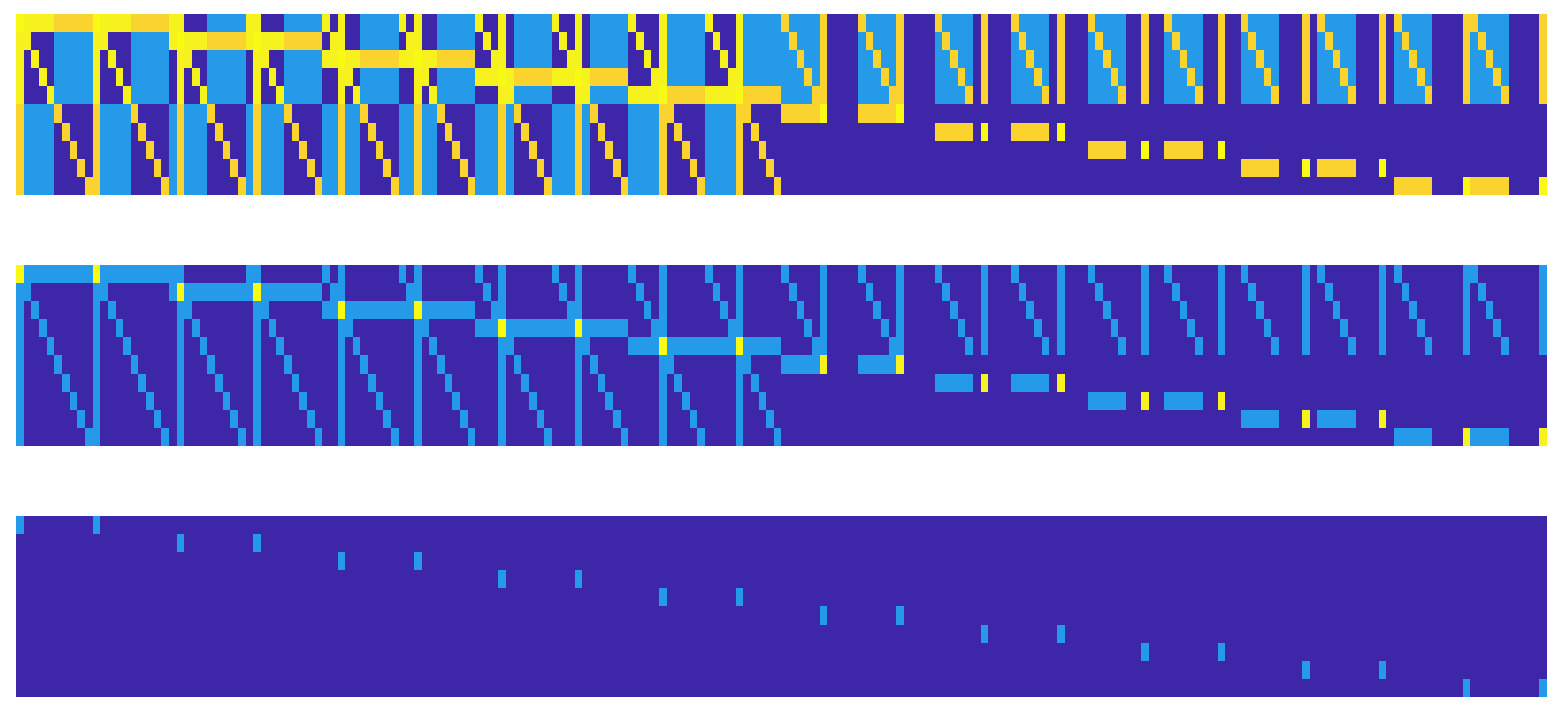}
    \caption{The proof of Proposition~\ref{prop:li_4} shows that the coefficients in~\eqref{eqn:linear_comb} are zero, in three steps. Each step studies a $20 \times 20 \times 20$ tensor of unknown coefficients, illustrated here as a $20 \times 400$ matrix. Darkest (dark blue) entries are zero, second darkest (light blue) entries are equal to one coefficient, and brightest (yellow/orange) entries are a linear combination of more than one coefficient.}
    \label{fig:li_order4}
\end{figure}

\begin{proposition}
\label{prop:li_6}
The set of tensors $\Wcal$ from Definition~\ref{def:Wset} is linearly independent.
\end{proposition}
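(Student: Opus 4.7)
By Lemma~\ref{lem:indep_li6}, any linear dependence among $\Wcal = \Wcal_1 \cup \Wcal_2$ splits into separate dependences among $\Wcal_1$ and $\Wcal_2$, so it suffices to prove each of those two sets is linearly independent. The permutation $\pi$ of~\eqref{eqn:pi} is a linear bijection of $\RR^{4n}$ sending $\Wcal_1$ bijectively to $\Wcal_2$, so the two problems are equivalent and I focus on $\Wcal_1$.

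Suppose $\sum_{T \in \Wcal_1} c_T \, T = 0$. Each $T = u^{\otimes 5}$ is determined by a vector $u = (a|b) \in \RR^{2n} \oplus \RR^{2n}$ whose support is the union of $\alpha$-blocks $\{2i-1, 2i\}$ for $i \in I(u) \subseteq [n]$ and $\{2n+2k-1, 2n+2k\}$ for $k \in K(u) \subseteq [n]$. Following the template of Proposition~\ref{prop:li_4}, the plan is to evaluate the vanishing linear combination at entries $(j_1|\cdots|j_5)$ of the order-$5$ tensor, chosen with one coordinate from each of a prescribed collection of $\alpha$-blocks; only those $T$ whose $u$-support meets every chosen block can contribute. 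I would process the sixteen types in $\Wcal_1$ in stages of decreasing $s := |I(u)| + |K(u)|$, from $s = 5$ down to $s = 1$. At stage $s$, the probe entries lie in exactly $s$ distinct $\alpha$-blocks, and subtracting the already-zeroed higher-stage contributions isolates a small linear system in the stage-$s$ coefficients. For instance, at $s = 5$ the entry $(2i_1{-}1|2i_2{-}1|2i_3{-}1|2i_4{-}1|2n{+}2k{-}1)$ is supported on a single tensor of type $(\al_{i_1}+\al_{i_2}+\al_{i_3}+\al_{i_4}|\al_k)^{\otimes 5}$ and thus forces its coefficient to vanish.

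At three support patterns, namely $(|I|,|K|) \in \{(1,1),(2,1),(3,1)\}$, two types in $\Wcal_1$ share the same support but differ in the scalar $c$ multiplying the $\mathcal{E}$-part (for instance $\tfrac{n-2}{n-3}$ versus $\tfrac{n-1}{n-4}$ at $(1,1)$, or $\tfrac{n-3}{n-4}$ versus $\tfrac{n-2}{n-1}$ at $(3,1)$). To separate their coefficients I would evaluate at two probe entries that repeat the $\mathcal{E}$-index a different number of times, so that a type with scalar $c$ contributes $c^p$ for two different values of $p$; the resulting $2\times 2$ Vandermonde-like system is nonsingular because the two scalars are distinct whenever $n \geq 5$. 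The main obstacle is the combinatorial bookkeeping across the five stages: at each stage one must record precisely which coefficients from the current and already-processed stages contribute to each probe, and verify that the resulting linear system uniquely forces the newly-active coefficients to vanish. The rational scalars in Definition~\ref{def:Wset} were chosen precisely so that both this stage-by-stage separation works and the clones of $x^4 y$ and $x^3 y^2$ lie in $\Span \Wcal_1$ (cf.\ Proposition~\ref{prop:cond3}); I expect the full verification to occupy the appendix.
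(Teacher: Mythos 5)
Your proposal is correct and takes essentially the same route as the paper: it splits off $\Wcal_2$ via Lemma~\ref{lem:indep_li6} (the paper phrases the $\pi$-symmetry as ``a similar argument''), and then zeroes the coefficients in $\Wcal_1$ by evaluating at probe entries, processed from larger to smaller support. Your Vandermonde-in-$c^p$ device for the three same-support pairs $(1,1)$, $(2,1)$, $(3,1)$ is a correct way to make explicit the step the paper compresses into ``repeating, by considering tensors with smaller and smaller support,'' and the needed distinctness of the scalars does hold for the relevant $n$.
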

\begin{proof}
We have $\Wcal = \Wcal_1 \cup \Wcal_2$. First we show that the vectors in $\Wcal_1$ are linearly independent. 
Consider a linear combination $\Tcal$ of vectors $u^{\otimes 6}$ where $u$ ranges over the $16$ types of vector in Definition~\ref{def:Wset}. Assume that this linear combination vanishes.

The only tensor in $\Wcal_1$ that is non-zero at entry  $(2i_1|2i_2|2i_3|2i_4|2n+2k)$ is $u^{\otimes 6}$ where $u = (\al_{i_1}+\al_{i_2}+\al_{i_3}+\al_{i_4}|\al_k)$. Hence no such terms appear in a vanishing linear combination. Having removed these terms, the only tensor in $\Wcal_1$ that is non-zero at entry 
$(2i_1|2i_2|2i_3|2i_4|2i_1)$
is $u^{\otimes 6}$ where $u = (\al_{i_1}+\al_{i_2}+\al_{i_3}+\al_{i_4}|0)$. Hence no such terms appear in a vanishing linear combination. 
The only tensor in $\Wcal_1$ with non-zero coefficient
$(2i_1|2i_2|2i_3|2k_1+2n|2k_2+2n)$ is $u^{\otimes 6}$, where $u = (\al_{i_1}+\al_{i_2}+\al_{i_3}|\al_{k_1} + \al_{k_2})$.  
 Hence no such terms appear in a vanishing linear combination. 
Repeating, by considering tensors in $\Wcal_1$ with smaller and smaller support, shows that all terms in the linear combination must have coefficient zero. 
By a similar argument, the set $\Wcal_2$ is linearly independent.
Now assume we have $\Tcal_1 \in \Wcal_1$ and $\Tcal_2 \in \Wcal_2$ with $\Tcal_1 + \Tcal_2 = 0$. Then $\Tcal_1 = \Tcal_2 = 0$, by Lemma~\ref{lem:indep_li6}.
\end{proof}

\begin{proof}[{Proof of Theorem~\ref{thm:order6counter}}.]
The tensor $\SAdj(\Ccal_c, \Wcal)$ has different rank and symmetric rank, by
Corollary~\ref{cor:counter_ex} and Propositions~\ref{prop:cond3},~\ref{prop:4e}, and~\ref{prop:cond6}.
It remains to find the size, rank, and symmetric rank of this tensor. The set of $\Wcal_1$ consists of $\binom{7}{4}+\binom{7}{3}+\binom{7}{2}+\binom{7}{1}+\binom{7}{4}\binom{7}{1}+\binom{7}{2}\binom{7}{1}+2\binom{7}{1}\binom{7}{1}+\binom{7}{3}\binom{7}{2}+\binom{7}{2}\binom{7}{2}+\binom{7}{2}\binom{7}{1}+\binom{7}{1}+2\binom{7}{3}\binom{7}{1}+\binom{7}{1}\binom{7}{2}+\binom{7}{2}=2576$ tensors.
Hence $\Wcal$ consists of $2576\times 2= 5152$ tensors. 
Therefore $\SAdj(\Ccal_c, \Wcal) \in (\RR^I)^{\otimes 6}$, where $|I| = 28 + 5152 = 5180$. 
The set $\Wcal$ is linearly independent, by Proposition~\ref{prop:li_6}. Hence $\rk \SAdj (\Ccal_c, \Wcal) = 1+5152\times6=30913$ and $\srk \SAdj (\Ccal_c, \Wcal) = 2+5152\times 6=30914$. 
\end{proof}

\begin{remark}
We can reduce the size of the tensor in Theorem~\ref{thm:order6counter} slightly, as follows.
Given $u=(u_E|u_{\Ecal}) \in \RR^{28}$ with $u^{\otimes 5} \in \Wcal$, the vectors $u_E, u_{\Ecal} \in \RR^{14}$ have the sum of their entries at even indices equal to the
sum of their entries at odd indices, hence the vectors $u = (u_E|u_{\Ecal})$ lie in a 26-dimensional subspace, cf.~\cite[Remark 11.2]{shitov2020comon}. 
So, with a change of basis, we have a counterexample in $(\RR^I)^{\otimes 6}$, where $|I| = 28 - 2 + 5152 = 5178$. 
\end{remark}

\begin{remark}
The border rank of the tensor $\SAdj(\Ccal_c, \Wcal) \in (\RR^{5180})^{\otimes 6}$ is at most $2+5152\times2 = 10306$, since each adjoined slice $x^{d-1} y$ has border rank two.
\end{remark}

We conclude with some open problems.
\begin{itemize}
\item For a symmetric tensor $\Tcal$, compare the decomposable rank $\drk_J \Tcal$ with the symmetric decomposable rank $\sdrk_J \Tcal$ across subsets $J \subset [d]$. The two ranks coincide for $|J| = 1$, since the slice space $\Lcal_J$ is then a linear space of vectors, but they may differ for $|J| = d-1$. 
\end{itemize}
It remains unknown whether counterexamples to Comon's conjecture exist for small tensors, and whether they exist at low ranks, see~\cite[Problem 5.5]{seigal2019structured}. We mention next steps for these lines of investigation.
\begin{itemize}
\item Find other symmetric tensors $\Ccal \in (\RR^I)^{\otimes d}$ and finite sets of symmetric tensors $\Mcal \subset (\RR^I)^{\otimes (d-1)}$ that satisfy Step 1 of the construction of a counterexample, i.e. for which there is strict inequality $\minrk (\Ccal \Mod \Mcal) < \minsrk (\Ccal \Mod \Mcal)$. Find an order three real example. The paper~\cite{shitov2018counterexample} gives an example over the complex numbers with $3 = \minrk (\Ccal \Mod \Mcal) < \minsrk (\Ccal \Mod \Mcal) = 4$. Find an example over the complex numbers with $\minrk (\Ccal \Mod \Mcal) = 1$.
\item Prove Conjecture~\ref{conj:modspan}, and its complex analogue~\cite[Conjecture 7]{shitov2018counterexample}, for a wider class of tensors, cf. Remark~\ref{rmk:conj_more}. 
\end{itemize}

\bigskip
{\bf Acknowledgements.}
We thank the anonymous referees for comments that improved the paper.
We thank JM Landsberg for helpful discussions. 
We are grateful for funding from an LMS Undergraduate Research Bursary (Grant ref. URB-2021-25). We thank Jared Tanner for supporting the project proposal.

\bibliographystyle{alpha}

\bibliography{references}

\newcommand{\etalchar}[1]{$^{#1}$}
\begin{thebibliography}{{GTE}15}

\bibitem[AAA{\etalchar{+}}21]{ahern2021blood}
David~J Ahern, Zhichao Ai, Mark Ainsworth, Chris Allan, et~al.
\newblock A blood atlas of {COVID}-19 defines hallmarks of disease severity and
  specificity.
\newblock {\em MedRxiv}, 2021.

\bibitem[AFT11]{alexeev2011tensor}
Boris Alexeev, Michael~A Forbes, and Jacob Tsimerman.
\newblock Tensor rank: Some lower and upper bounds.
\newblock In {\em 2011 IEEE 26th Annual Conference on Computational
  Complexity}, pages 283--291. IEEE, 2011.

\bibitem[AGH{\etalchar{+}}14]{anandkumar2014tensor}
Animashree Anandkumar, Rong Ge, Daniel Hsu, Sham~M Kakade, and Matus Telgarsky.
\newblock Tensor decompositions for learning latent variable models.
\newblock {\em Journal of machine learning research}, 15:2773--2832, 2014.

\bibitem[AGHK13]{anandkumar2013tensor}
Anima Anandkumar, Rong Ge, Daniel Hsu, and Sham~M Kakade.
\newblock A tensor approach to learning mixed membership community models,
  2013.

\bibitem[BCS13]{burgisser2013algebraic}
Peter B{\"u}rgisser, Michael Clausen, and Mohammad~A Shokrollahi.
\newblock {\em Algebraic complexity theory}, volume 315.
\newblock Springer Science \& Business Media, 2013.

\bibitem[BDE19]{bik2019polynomials}
Arthur Bik, Jan Draisma, and Rob~H Eggermont.
\newblock Polynomials and tensors of bounded strength.
\newblock {\em Communications in Contemporary Mathematics}, 21(07):1850062,
  2019.

\bibitem[BGL13]{buczynski2013determinantal}
Jaros{\l}aw Buczy{\'n}ski, Adam Ginensky, and Joseph~M Landsberg.
\newblock Determinantal equations for secant varieties and the
  {E}isenbud--{K}oh--{S}tillman conjecture.
\newblock {\em Journal of the London Mathematical Society}, 88(1):1--24, 2013.

\bibitem[BI11]{burgisser2011geometric}
Peter B{\"u}rgisser and Christian Ikenmeyer.
\newblock Geometric complexity theory and tensor rank.
\newblock In {\em Proceedings of the forty-third annual ACM symposium on Theory
  of computing}, pages 509--518, 2011.

\bibitem[BTY{\etalchar{+}}21]{bi2021tensors}
Xuan Bi, Xiwei Tang, Yubai Yuan, Yanqing Zhang, and Annie Qu.
\newblock Tensors in statistics.
\newblock {\em Annual review of statistics and its application}, 8:345--368,
  2021.

\bibitem[CCG12]{Carlini2012TS}
Enrico Carlini, Maria~Virginia Catalisano, and Anthony~V. Geramita.
\newblock The solution to the {W}aring problem for monomials and the sum of
  coprime monomials.
\newblock {\em Journal of Algebra}, 370:5--14, 2012.

\bibitem[CDM19]{calvi2019tight}
Giuseppe~G Calvi, Bruno~Scalzo Dees, and Danilo~P Mandic.
\newblock Tight lower bound on the tensor rank based on the maximally square
  unfolding.
\newblock {\em arXiv preprint arXiv:1909.05831}, 2019.

\bibitem[CGLM08]{comon2008symmetric}
Pierre Comon, Gene Golub, Lek-Heng Lim, and Bernard Mourrain.
\newblock Symmetric tensors and symmetric tensor rank.
\newblock {\em SIAM Journal on Matrix Analysis and Applications},
  30(3):1254--1279, 2008.

\bibitem[CHHN21]{cai2021mode}
HanQin Cai, Keaton Hamm, Longxiu Huang, and Deanna Needell.
\newblock Mode-wise tensor decompositions: Multi-dimensional generalizations of
  {CUR} decompositions.
\newblock {\em arXiv preprint arXiv:2103.11037}, 2021.

\bibitem[CKOV17]{carlini2017real}
Enrico Carlini, Mario Kummer, Alessandro Oneto, and Emanuele Ventura.
\newblock On the real rank of monomials.
\newblock {\em Mathematische Zeitschrift}, 286(1):571--577, 2017.

\bibitem[DL20]{domanov2020uniqueness}
Ignat Domanov and Lieven~De Lathauwer.
\newblock On uniqueness and computation of the decomposition of a tensor into
  multilinear rank-{$(1,L\_r,L\_r)$} terms.
\newblock {\em SIAM Journal on Matrix Analysis and Applications},
  41(2):747--803, 2020.

\bibitem[DSL08]{de2008tensor}
Vin De~Silva and Lek-Heng Lim.
\newblock Tensor rank and the ill-posedness of the best low-rank approximation
  problem.
\newblock {\em SIAM Journal on Matrix Analysis and Applications},
  30(3):1084--1127, 2008.

\bibitem[Fri16]{friedland2016remarks}
Shmuel Friedland.
\newblock Remarks on the symmetric rank of symmetric tensors.
\newblock {\em SIAM Journal on Matrix Analysis and Applications},
  37(1):320--337, 2016.

\bibitem[GOV19]{gesmundo2019partially}
Fulvio Gesmundo, Alessandro Oneto, and Emanuele Ventura.
\newblock Partially symmetric variants of {C}omon's problem via simultaneous
  rank.
\newblock {\em SIAM Journal on Matrix Analysis and Applications},
  40(4):1453--1477, 2019.

\bibitem[{GTE}15]{gtex2015genotype}
{GTEx Consortium}.
\newblock The genotype-tissue expression ({GTE}x) pilot analysis: multitissue
  gene regulation in humans.
\newblock {\em Science}, 348(6235):648--660, 2015.

\bibitem[Hac12]{hackbusch2012tensor}
Wolfgang Hackbusch.
\newblock {\em Tensor spaces and numerical tensor calculus}, volume~42.
\newblock Springer, 2012.

\bibitem[H{\aa}s89]{haastad1989tensor}
Johan H{\aa}stad.
\newblock Tensor rank is {NP}-complete.
\newblock In {\em International Colloquium on Automata, Languages, and
  Programming}, pages 451--460. Springer, 1989.

\bibitem[HL13]{hillar2013most}
Christopher~J Hillar and Lek-Heng Lim.
\newblock Most tensor problems are {NP}-hard.
\newblock {\em Journal of the ACM (JACM)}, 60(6):1--39, 2013.

\bibitem[HVB{\etalchar{+}}16]{hore2016tensor}
Victoria Hore, Ana Vinuela, Alfonso Buil, Julian Knight, Mark~I McCarthy,
  Kerrin Small, and Jonathan Marchini.
\newblock Tensor decomposition for multiple-tissue gene expression experiments.
\newblock {\em Nature genetics}, 48(9):1094--1100, 2016.

\bibitem[IK99]{iarrobino1999power}
A.~Iarrobino and V.~Kanev.
\newblock {\em Power Sums, Gorenstein Algebras, and Determinantal Loci}.
\newblock Lecture Notes in Mathematics. Springer Berlin Heidelberg, 1999.

\bibitem[KB06]{kolda2006matlab}
Tamara~G Kolda and Brett~W Bader.
\newblock Matlab tensor toolbox.
\newblock Technical report, Sandia National Laboratories (SNL), Albuquerque,
  NM, and Livermore, CA., 2006.

\bibitem[Lan12]{landsberg2012tensors}
Joseph~M Landsberg.
\newblock Tensors: geometry and applications.
\newblock {\em Representation theory}, 381(402):3, 2012.

\bibitem[Lan17]{landsberg2017geometry}
Joseph~M Landsberg.
\newblock {\em Geometry and complexity theory}, volume 169.
\newblock Cambridge University Press, 2017.

\bibitem[LC09]{lim2009nonnegative}
Lek-Heng Lim and Pierre Comon.
\newblock Nonnegative approximations of nonnegative tensors.
\newblock {\em Journal of Chemometrics: A Journal of the Chemometrics Society},
  23(7-8):432--441, 2009.

\bibitem[LT10]{landsberg2010ranks}
Joseph~M Landsberg and Zach Teitler.
\newblock On the ranks and border ranks of symmetric tensors.
\newblock {\em Foundations of Computational Mathematics}, 10(3):339--366, 2010.

\bibitem[McC18]{mccullagh2018tensor}
Peter McCullagh.
\newblock {\em Tensor methods in statistics}.
\newblock Chapman and Hall/CRC, 2018.

\bibitem[MD09]{mahoney2009cur}
Michael~W Mahoney and Petros Drineas.
\newblock {CUR} matrix decompositions for improved data analysis.
\newblock {\em Proceedings of the National Academy of Sciences},
  106(3):697--702, 2009.

\bibitem[Rod21]{rodriguez2021rank}
Jorge~Tom{\'a}s Rodr{\'\i}guez.
\newblock On the rank and the approximation of symmetric tensors.
\newblock {\em Linear Algebra and its Applications}, 628:72--102, 2021.

\bibitem[RS19]{robeva2019duality}
Elina Robeva and Anna Seigal.
\newblock Duality of graphical models and tensor networks.
\newblock {\em Information and Inference: A Journal of the IMA}, 8(2):273--288,
  2019.

\bibitem[SBB{\etalchar{+}}20]{schurch2020coordinated}
Christian~M Sch{\"u}rch, Salil~S Bhate, Graham~L Barlow, Darci~J Phillips,
  et~al.
\newblock Coordinated cellular neighborhoods orchestrate antitumoral immunity
  at the colorectal cancer invasive front.
\newblock {\em Cell}, 182(5):1341--1359, 2020.

\bibitem[Sei19]{seigal2019structured}
Anna~Leah Seigal.
\newblock {\em Thesis. Structured tensors and the geometry of data}.
\newblock University of California, Berkeley, 2019.

\bibitem[Sei20]{seigal2020ranks}
Anna Seigal.
\newblock Ranks and symmetric ranks of cubic surfaces.
\newblock {\em Journal of Symbolic Computation}, 101:304--317, 2020.

\bibitem[Shi18]{shitov2018counterexample}
Yaroslav Shitov.
\newblock A counterexample to {C}omon's conjecture.
\newblock {\em SIAM Journal on Applied Algebra and Geometry}, 2(3):428--443,
  2018.

\bibitem[Shi19]{Shitov2019CounterexamplesTS}
Yaroslav Shitov.
\newblock Counterexamples to {S}trassen’s direct sum conjecture.
\newblock {\em Acta Mathematica}, 2019.

\bibitem[Shi20]{shitov2020comon}
Yaroslav Shitov.
\newblock Comon's conjecture over the reals.
\newblock {\em viXra preprint viXra:2009.0134}, 2020.

\bibitem[SNC{\etalchar{+}}17]{subramanian2017next}
Aravind Subramanian, Rajiv Narayan, Steven~M Corsello, David~D Peck, et~al.
\newblock A next generation connectivity map: L1000 platform and the first
  1,000,000 profiles.
\newblock {\em Cell}, 171(6):1437--1452, 2017.

\bibitem[Sul18]{sullivant2018algebraic}
Seth Sullivant.
\newblock {\em Algebraic statistics}, volume 194.
\newblock American Mathematical Soc., 2018.

\bibitem[VDDL16]{vervliet2016tensorlab}
Nico Vervliet, Otto Debals, and Lieven De~Lathauwer.
\newblock Tensorlab 3.0—numerical optimization strategies for large-scale
  constrained and coupled matrix/tensor factorization.
\newblock In {\em 2016 50th Asilomar Conference on Signals, Systems and
  Computers}. IEEE, 2016.

\bibitem[WDFS17]{wang2017operator}
Miaoyan Wang, Khanh~Dao Duc, Jonathan Fischer, and Yun~S Song.
\newblock Operator norm inequalities between tensor unfoldings on the partition
  lattice.
\newblock {\em Linear algebra and its applications}, 520:44--66, 2017.

\bibitem[ZHQ16]{zhang2016comon}
Xinzhen Zhang, Zheng-Hai Huang, and Liqun Qi.
\newblock Comon's conjecture, rank decomposition, and symmetric rank
  decomposition of symmetric tensors.
\newblock {\em SIAM Journal on Matrix Analysis and Applications},
  37(4):1719--1728, 2016.

\bibitem[ZHSX20]{zheng2020comon}
Baodong Zheng, Riguang Huang, Xiaoyu Song, and Jinli Xu.
\newblock On {C}omon's conjecture over arbitrary fields.
\newblock {\em Linear Algebra and its Applications}, 587:228--242, 2020.

\end{thebibliography}

\appendix

\section{Proofs from Section~\ref{sec:order6}.}
\label{sec:appendix} 

\begin{lemma}\label{lem:mu41}
The clone of $x^4 y$ is in $\Span\Wcal_1$, for $n \geq 5$.
\end{lemma}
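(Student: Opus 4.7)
The plan is to exhibit an explicit linear combination of elements of $\Wcal_{1}$ that equals the $n$-clone of $x^{4}y$. As a first step, I would reinterpret the target: the clone corresponds to the symmetric tensor in $(\RR^{4n})^{\otimes 5}$ whose associated polynomial on the $4n$ coordinate variables is a scalar multiple of $(v_{1}+\cdots+v_{2n})^{4}(v_{2n+1}+\cdots+v_{4n})$, equivalently $S^{4}T$ where $S=\sum_{i=1}^{n}(\alpha_{i}\,|\,0)$ and $T=\sum_{k=1}^{n}(0\,|\,\alpha_{k})$ are the two "bundled" linear forms.

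Both the target and $\Span \Wcal_{1}$ are invariant under the group $S_{n}\times S_{n}$ acting independently on the two halves by permuting the $\alpha$-indices, so I would try an ansatz in which the coefficient $c_{t}$ of each $u^{\otimes 5}$ depends only on the type $t\in\{1,\ldots,16\}$ of $u$ in Definition~\ref{def:Wset}. Expanding $\sum_{t} c_{t}\sum_{u \text{ of type }t}u^{\otimes 5}$ as a polynomial in the $\alpha$'s and grouping monomials by their exponent profile on each half (e.g.\ $\alpha_{i}^{4}\alpha'_{k}$, $\alpha_{i_1}^{3}\alpha_{i_2}\alpha'_{k}$, $\alpha_{i_1}^{2}\alpha_{i_2}^{2}\alpha'_{k}$, \ldots, together with off-target profiles such as $\alpha_{i}^{5}$ or $\alpha_{i}^{3}\alpha'_{k_1}\alpha'_{k_2}$) produces a finite linear system in the $c_{t}$. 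The on-target equations pin down the $c_{t}$, and the off-target equations must then cancel for the identity to hold.

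The rational coefficients $\tfrac{n-2}{n-3},\tfrac{n-1}{n-4},\tfrac{(n-2)^{2}}{(n-3)(n-1)},\ldots$ appearing in Definition~\ref{def:Wset} arise from solving this combined system, and the hypothesis $n\geq 5$ ensures both that each type is nonempty (so $\binom{n}{4}>0$) and that every denominator is nonzero. The main obstacle is bookkeeping: for each monomial profile one must count, for each of the $16$ types, how many vectors of that type give an $u^{\otimes 5}$ hitting that profile and with what multinomial weight. Since the resulting identity is polynomial in $n$ once the ansatz is fixed, a practical route is to determine the $c_{t}$ from the on-target equations and then verify the off-target cancellations symbolically or numerically; the matlab code referenced in Proposition~\ref{prop:cond3} provides such a check, and an independent algebraic verification can be obtained by matching the coefficient of one representative monomial per profile.
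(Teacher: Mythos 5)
Your overall strategy is essentially the shape of the paper's argument: the paper proves the lemma by exhibiting an explicit linear combination of the tensors $u^{\otimes 5}$ in $\Wcal_1$ whose coefficients are constant on each type, i.e.\ exactly the $S_n\times S_n$-invariant ansatz you describe. Your symmetry reduction is legitimate (each type is a single orbit and the clone is invariant), and since the $2n$ linear forms attached to $(\al_1|0),\ldots,(\al_n|0),(0|\al_1),\ldots,(0|\al_n)$ have disjoint supports, comparing coefficients of monomials in the $\al$'s is a valid way to set up the linear system for the $c_t$.

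However, as written your proposal does not prove the lemma, because the decisive step is missing: you never show that the resulting overdetermined system is consistent, i.e.\ that after the on-target equations fix the $c_t$, all off-target profiles (such as $\al_i^5$, $\al_{i_1}^4\al_{i_2}$, $\al_{i_1}^3\al'_{k_1}\al'_{k_2}$, and the pure blocks $\Tcal_E$, $\Tcal_{\mathcal{E}}$) really do cancel. Saying they ``must then cancel for the identity to hold'' is circular, and deferring to a symbolic or numerical check does not discharge it. Consistency here is genuinely the content of the lemma and is not automatic: the same ansatz applied to the clone of $xy^4$ (or $x^2y^3$) has \emph{no} solution, since every tensor in $\Wcal_1$ vanishes at the entry $(1|2n+1|2n+3|2n+5|2n+7)$ while those clones do not (this is the mechanism behind Lemma~\ref{lem:indep_li6}); nothing in your argument distinguishes $x^4y$ from $xy^4$. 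The paper closes the gap by writing down the coefficients explicitly, in two stages: a combination of the four mixed types $(\al_{i_1}+\cdots+\al_{i_m}\,|\,c\,\al_{k_1})^{\otimes 5}$, $m=4,3,2,1$, with weights $1,\lambda_1,\lambda_2,\lambda_3$ that agrees with the clone outside the diagonal blocks, followed by a correction of $\Tcal_E$ and $\Tcal_{\mathcal{E}}$ using the pure types $(\cdots|0)^{\otimes 5}$ and $(0|\al_k)^{\otimes 5}$ with weights $\lambda_4,\ldots,\lambda_8$. To complete your proof you would need to carry out your monomial-profile bookkeeping to the point of producing such coefficients and verifying the cancellations (or give a structural reason for consistency), not merely assert that the system can be solved.
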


\begin{proof}
Take the following linear combination of tensors in $\mathcal{W}_1 \subset (\RR^{E \cup \mathcal{E}})^{\otimes 5}$:
\begin{small}
\begin{align*}
 \sum_{\substack{1\leq i_1<i_2<i_3<i_4 \leq n \\ 1\leq k_1 \leq n}} (\al_{i_{1}}+\al_{i_{2}}+\al_{i_{3}}+\al_{i_{4}}|\al_{k_1})^{\otimes5}
+ \lambda_1 \sum_{\substack{1\leq i_1<i_2<i_3 \leq n \\ 1\leq k_1 \leq n}}\left(\al_{i_{1}}+\al_{i_{2}}+\al_{i_{3}}|\frac{n-3}{n-4}\al_{k_1}\right)^{\otimes5} \\
+ \lambda_2 \sum_{\substack{1\leq i_1<i_2 \leq n \\ 1\leq k_1 \leq n}}\left(\al_{i_{1}}+\al_{i_{2}}|\frac{n-2}{n-4}\al_{k_1}\right)^{\otimes 5}
+ \lambda_3 \sum_{\substack{1\leq i_1 \leq n \\ 1\leq k_1 \leq n}}\left(\al_{i_{1}}|\frac{n-1}{n-4}\al_{k_1}\right)^{\otimes5},\end{align*}
\end{small}
where $\lambda_1 = -
\frac{(n-4)^2}{n-3}$, $\lambda_2 = \frac{(n-3)(n-4)^2}{2(n-2)}$ and $\lambda_3 = -\frac{(n-2)(n-3)(n-4)^2}{6(n-1)}$.
This $\Tcal$ coincides with the clone of $x^4 y$, on all entries except its diagonal blocks $\Tcal_E$ and $\Tcal_\mathcal{E}$.
We correct the diagonal blocks by adding the following linear combination of tensors in $\Wcal_1$:
\begin{align*}
    \lambda_4 \sum\limits_{1\leq i_1<i_2<i_3<i_4 \leq n}(\al_{i_{1}}+\al_{i_{2}}+\al_{i_{3}}+\al_{i_{4}}|0)^{\otimes 5}+
\lambda_5 \sum\limits_{1\leq i_1<i_2<i_3 \leq n}(\al_{i_{1}}+\al_{i_{2}}+\al_{i_{3}}|0)^{\otimes5} \\ + \lambda_6 \sum\limits_{1\leq i_1<i_2 \leq n}(\al_{i_{1}}+\al_{i_{2}}|0)^{\otimes5}
+\lambda_7 \sum\limits_{1\leq i_1 \leq n}(\al_{i_1}|0)^{\otimes5}
+ \lambda_8 \sum\limits_{1\leq k \leq n}(0|\al_{k})^{\otimes5},
\end{align*}
where $\lambda_4 = -n$, $\lambda_5 = \frac{(n-4)^2 n}{n-3}$, $\lambda_6 = -\frac{(n-3)(n-4)^2 n}{2(n-2)}$, $\lambda_7 = \frac{(n-2)(n-3)(n-4)^2 n}{6(n-1)}$, and $\lambda_8 = -(\tbinom{n}{4}-\frac{(n-3)^4}{(n-4)^3}\tbinom{n}{3}+\frac{(n-3)(n-2)^4}{2(n-4)^3}\tbinom{n}{2}
-\frac{(n-2)(n-3)(n-1)^4 n}{6(n-4)^3})$. 
\end{proof}

\begin{lemma}\label{lem:mu32}
The clone of $x^3 y^2$ is in $\Span\Wcal_1$, for $n \geq 5$.
\end{lemma}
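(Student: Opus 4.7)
The plan is to mirror the two-step construction of Lemma~\ref{lem:mu41}: first, combine \emph{mixed} tensors in $\Wcal_1$ (those with both $E$- and $\Ecal$-components nonzero) to match the clone of $x^3y^2$ at every off-diagonal position, then correct the two diagonal blocks $\Tcal_E$ and $\Tcal_\Ecal$ with purely $E$- or $\Ecal$-supported tensors from Definition~\ref{def:Wset}. For any tensor $(v|v')^{\otimes 5}$ with $v$ a sum of $s$ distinct $\al_i$'s and $v'$ equal to a scalar $c$ times a sum of $t$ distinct $\al_k$'s, the contribution to a position of type $(|I|,|K|)=(a,b)$ with $m_\Ecal$ indices in $\Ecal$ equals $c^{m_\Ecal}\binom{n-a}{s-a}\binom{n-b}{t-b}$ if $I\subseteq\{i_1,\ldots,i_s\}$ and $K\subseteq\{k_1,\ldots,k_t\}$, and zero otherwise.

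The natural first batch of mixed vectors for the 3E-2Ecal target are the four types in Definition~\ref{def:Wset} carrying two $\al$'s in the $\Ecal$-factor: $(\al_{i_1}+\al_{i_2}+\al_{i_3}|\al_{k_1}+\al_{k_2})$, $(\al_{i_1}+\al_{i_2}|\tfrac{n-2}{n-3}(\al_{k_1}+\al_{k_2}))$, $(\al_{i_1}|\tfrac{n-1}{n-3}(\al_{k_1}+\al_{k_2}))$, and $(0|\al_{k_1}+\al_{k_2})$. With appropriate multipliers $\mu_1,\mu_2$ on the second and third, one can force the 3E-2Ecal entries with $|K|=2$ to equal $1$ while the 2E-3Ecal and 1E-4Ecal entries vanish identically, as a direct check using the formula above confirms (the $\Ecal$-side analog of the cancellations in Lemma~\ref{lem:mu41}). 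However, this combination overshoots at 3E-2Ecal positions with $|K|=1$ (producing $n-1$ instead of $1$) and leaves a nonzero residue at 4E-1Ecal positions coming from the $(3,2)$-type. To repair both simultaneously, I would add the three $(s,1)$-types of Definition~\ref{def:Wset} not yet used in Lemma~\ref{lem:mu41}, namely $(\al_{i_1}+\al_{i_2}+\al_{i_3}|\tfrac{n-2}{n-1}\al_{k_1})$, $(\al_{i_1}+\al_{i_2}|\tfrac{(n-2)^2}{(n-3)(n-1)}\al_{k_1})$, and $(\al_{i_1}|\tfrac{n-2}{n-3}\al_{k_1})$, together with $(\al_{i_1}+\al_{i_2}+\al_{i_3}+\al_{i_4}|\al_{k_1})$. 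Their $c$-values appear engineered so that the four new multipliers form a triangular system solvable for $n\geq 5$.

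After Step 1, the tensor agrees with the clone of $x^3y^2$ at every off-diagonal position, so the remaining difference is a symmetric tensor supported on each diagonal block whose entries depend only on the distribution of indices among the pairs $\{2i-1,2i\}$. Such tensors lie in the span of $(\al_{i_1}+\cdots+\al_{i_s}|0)^{\otimes 5}$ for $s\in\{1,2,3,4\}$ on the $E$-side and of $(0|\al_{k_1}+\al_{k_2})^{\otimes 5}$, $(0|\al_{k_1})^{\otimes 5}$ on the $\Ecal$-side, so the correction decouples into two triangular subsystems, each solved by back-substitution. The main obstacle is the combinatorial bookkeeping in Step 1: one must verify that the chosen seven-or-so mixed types contribute correctly at each of the twenty off-diagonal strata indexed by $(m_E,m_\Ecal,|I|,|K|)$. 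All the required identities reduce to polynomial identities in $n$ with denominators a product of factors $(n-1), (n-2), (n-3), (n-4)$, and can be verified symbolically or via the matlab code referenced in the proof of Proposition~\ref{prop:cond3}.
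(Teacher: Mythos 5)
Your plan is essentially the paper's own proof: the paper likewise expresses the clone of $x^3y^2$ as an explicit linear combination of the $(s,2)$-type tensors $(\al_{i_1}+\cdots|\,c(\al_{k_1}+\al_{k_2}))^{\otimes 5}$ and the $(s,1)$-type tensors with the ``engineered'' scalars $\frac{n-2}{n-1},\frac{(n-2)^2}{(n-3)(n-1)},\frac{n-2}{n-3}$ that you list (and with exactly the multipliers your direct check produces, $\mu_1=-\frac{(n-3)^3}{(n-2)^2}$, $\mu_2=\frac{(n-2)(n-3)^3}{2(n-1)^2}$), followed by the same diagonal-block correction using the pure $(\,\cdot\,|0)^{\otimes 5}$ and $(0|\,\cdot\,)^{\otimes 5}$ tensors, the consistency of the resulting over-determined system being settled by exhibiting the explicit coefficients $\mu_3,\ldots,\mu_{10}$. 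One small correction to your repair step: the family $(\al_{i_1}+\al_{i_2}+\al_{i_3}+\al_{i_4}|\,\al_{k_1})^{\otimes 5}$ does not appear in the paper's combination and in fact cannot appear with nonzero coefficient, since these are the only tensors in play that are nonzero at positions with four $E$-indices in four distinct blocks and one $\Ecal$-index, where the clone of $x^3y^2$ vanishes; the $|K|=1$ strata are repaired by the three $(s,1)$-types alone.
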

\begin{proof}Take the following linear combination of tensors in $\mathcal{W}_1 \subset (\RR^{E \cup \mathcal{E}})^{\otimes 5}$:
\begin{small}
\begin{align*}
\sum_{\substack{1\leq i_{1}<i_{2}<i_{3} \leq n \\ 1\leq k_1 <k_2 \leq n}}(\al_{i_{1}}+\al_{i_{2}}+\al_{i_{3}}|\al_{k_1}+\al_{k_2})^{\of}
+ \mu_1 \sum_{\substack{1\leq i_{1}<i_{2} \leq n \\ 1\leq k_1 <k_2 \leq n}}\left(\al_{i_{1}}+\al_{i_{2}}|\frac{n-2}{n-3}(\al_{k_1}+\al_{k_2})\right)^{\of}
\\
+\mu_2 \sum_{\substack{1\leq i_1\leq n \\ 1\leq k_1 <k_2 \leq n}}\left(\al_{i_1}|\frac{n-1}{n-3}(\al_{k_1}+\al_{k_2})\right)^{\of}
+ \mu_3 \sum_{\substack{1\leq i_1<i_2<i_3 \leq n \\ 1\leq k_1 \leq n}}\left( \al_{i_{1}}+\al_{i_{2}}+\al_{i_{3}}|\frac{n-2}{n-1}\al_{k_1}\right)^{\of}
\\
+\mu_4 \sum_{\substack{1\leq i_1<i_2 \leq n \\ 1\leq k_1 \leq n}}\left(\al_{i_{1}}+\al_{i_{2}}|\frac{(n-2)^2}{(n-3)(n-1)}\al_{k_1}\right)^{\of}
+ \mu_5 \sum_{\substack{1\leq i\leq n \\ 1\leq k_1 \leq n}}\left(\al_i|\frac{n-2}{n-3}\al_{k_1}\right)^{\of},
\end{align*}
\end{small}
where $\mu_1 = -\frac{(n-3)^3}{(n-2)^2}$, $\mu_2 = \frac{(n-2)(n-3)^3}{2(n-1)^2}$, $\mu_3 = -\frac{(n-1)^2}{n-2}$, $\mu_4 =\frac{(n-1)^2(n-3)^3}{(n-2)^3}$, $\mu_5 = -\frac{(n-3)^3}{2}$.
This tensor $\Tcal$ agrees with the clone of $x^3 y^2$ on all except the blocks $\Tcal_E$ and $\Tcal_\mathcal{E}$. We fix these blocks by adding on the linear combination 
\begin{align*}
\mu_6 \sum\limits_{1\leq i_{1}<i_{2}<i_{3} \leq n}(\al_{i_{1}}+\al_{i_{2}}+\al_{i_{3}}|0)^{\of}
+ \mu_7 \sum\limits_{1\leq i_1<i_2\leq n}(\al_{i_1}+\al_{i_2}|0)^{\of}
\\ + \mu_8 \sum\limits_{1\leq i_1\leq n}(\al_{i_1}|0)^{\of}
+ \mu_9 \sum\limits_{1\leq k_1 \leq n}(0|\al_{k_1})^{\of}
+ \mu_{10} \sum\limits_{1\leq k_1 < k_2 \leq n}(0|\al_{k_1}+\al_{k_2})^{\of},
\end{align*}
where $\mu_6 = -\tbinom{n}{2}+\frac{(n-1)^2 n}{(n-2)}$, $\mu_7 = \frac{(n-3)^3}{(n-2)^2}\tbinom{n}{2}-\frac{(n-1)^2(n-3)^3 n}{(n-2)^3}$, $\mu_8 = -\frac{(n-2)(n-3)^3}{2(n-1)^2}\tbinom{n}{2} + \frac{(n-3)^3 n}{2}$, $\mu_9 = \frac{(n-2)^4}{(n-1)^3}\tbinom{n}{3}-\frac{(n-2)^7}{(n-1)^3(n-3)^2}\tbinom{n}{2}+\frac{(n-2)^5n}{2(n-3)^2}$, and $\mu_{10} = -\tbinom{n}{3}+\tbinom{n}{2}\frac{(n-2)^3}{(n-3)^2}-\frac{(n-1)^3(n-2)n}{2(n-3)^2}$.
\end{proof}

\bigskip

\bigskip

\noindent
\footnotesize {\bf Authors' addresses:}

\noindent Kexin Wang, University of Oxford, {\tt kexin.wang@queens.ox.ac.uk}.

\noindent  Anna Seigal, Harvard University, {\tt aseigal@seas.harvard.edu}.

\end{document}